\theoremstyle{plain}
\newtheorem{theorem}{Theorem}[section]
\newtheorem{corollary}[theorem]{Corollary}
\newtheorem{definition}{Definition}[section]
\newtheorem{remark}[theorem]{Remark}
\newcommand{\rd}{\,\mathrm{d}}
\newcommand{\mH}{\mathcal{H}}
\newcommand{\mV}{\mathcal{V}}
\newcommand{\mW}{\mathcal{W}}
\newcommand{\mM}{\mathcal{M}}
\newcommand{\mE}{\mathcal{E}}
\newcommand{\bx}{\mathbf{x}}
\newcommand{\bn}{\mathbf{n}}
\begin{document}

	\title{Positivity-preserving and energy-dissipative \\   finite difference schemes \\for the Fokker-Planck and Keller-Segel equations}  	

	\author{Jingwei Hu\footnote{Department of Applied Mathematics, University of Washington, Seattle, WA 98195, USA (hujw@uw.edu). JH's research was supported by NSF CAREER grant DMS-2153208 and AFOSR grant FA9550-21-1-0358.} \  \  
	and \ Xiangxiong Zhang\footnote{Department of Mathematics, Purdue University, West Lafayette, IN 47907, USA (zhan1966@purdue.edu). XZ's research was supported by NSF grant DMS-1913120.} }    
	\maketitle
	
\begin{abstract}
In this work, we introduce semi-implicit or implicit finite difference schemes for the continuity equation with a gradient flow structure. Examples of such equations include the linear Fokker-Planck equation and the Keller-Segel equations. The two proposed schemes are first order accurate in time, explicitly solvable, and second order and fourth order accurate in space which are obtained via finite difference implementation of the classical continuous finite element method.  The fully discrete schemes are proved to be positivity-preserving and energy-dissipative: the second order scheme can achieve so unconditionally while the fourth order scheme only requires a mild time step and mesh size constraint. In particular, the fourth order scheme is the {\it first} high order spatial discretization that can achieve both positivity and energy decay properties, which is suitable for long time simulation and to obtain accurate steady state solutions.
\end{abstract}

{\small 
{\bf Key words.} Positivity, energy dissipation, Fokker-Planck, Keller-Segel, finite difference,  high order accuracy, implicit.

{\bf AMS subject classifications.} 35Q84, 65M06,  65M12, 65M60 
}


\section{Introduction}
\label{sec:intro}

In this paper, we are interested in the continuity equation of the form
\begin{align} 
&\partial_t \rho=\nabla \cdot [\rho\nabla (\mH'(\rho)+\mV+\mW*\rho)], \quad t> 0, \  \bx\in \Omega \subset \mathbb{R}^d, \label{conti} \\
&\rho(0,\bx)=\rho_0(\bx), 
\end{align}
where $\rho=\rho(t,\bx)\geq 0$ is the unknown density function, $\mH(\rho)$ is the internal energy which is assumed to be convex, $\mV(\bx)$ is the external potential, and $\mW(\bx)$ is the interaction potential. The typical boundary condition of (\ref{conti}) is the no-flux boundary:
\begin{equation} \label{boundary}
\nabla (\mH'(\rho)+\mV+\mW*\rho)\cdot \bn=0, \quad \bx\in\partial \Omega,
\end{equation}
where $\bn$ is the outward normal. Therefore, the total mass is conserved
\begin{equation*}
\int_\Omega\rho(t,\bx)\rd{\bx}=\int_\Omega\rho_0(\bx)\rd{\bx}.
\end{equation*}

Equations of the form (\ref{conti}) appear in various contexts, for example, in modeling of porous medium \cite{Vazquez07}, granular materials \cite{CMV03}, and collective behavior of biological and social systems \cite{CCY19}. In particular, we focus on the following two cases in this paper: the linear Fokker-Planck equation and the Keller-Segel model of chemotaxis. For both cases, the internal energy function is given by
\begin{equation} \label{HH}
\mH(\rho)=\rho \log \rho-\rho.
\end{equation} 

In the Fokker-Planck equation,
\begin{equation*}
\mV=\mV(\bx), \quad \mW\equiv 0,
\end{equation*}
where $\mV(\bx)$ is some given function bounded from below in $\Omega$. In this case, \eqref{conti} can also be written as  a convection-diffusion equation,
 \begin{equation} \label{FP}
 \partial_t\rho=\Delta \rho+\nabla \cdot( \rho \nabla \mV).
 \end{equation}

In the Keller-Segel model, $\rho$ is the density of some bacteria and 
\begin{equation*}
\mV\equiv0, \quad \mW*\rho=-c,
\end{equation*}
where $c=c(t,\bx)$ is the density of chemical attractant satisfying an elliptic equation in $\Omega$ with a constant $\alpha \geq 0$:
\begin{equation} \label{PP}
-\Delta c+\alpha c=\rho.
\end{equation} In this case, \eqref{conti} can be written as
 \begin{equation} \label{KS}
 \partial_t\rho=\Delta \rho-\nabla \cdot( \rho \nabla c),
 \end{equation}
 which is coupled with (\ref{PP}) to form a system.
Note that if $\Omega$ is $\mathbb{R}^d$, $\mW$ is the Newtonian potential when $\alpha=0$ and the Bessel potential when $\alpha>0$. 
By integrating (\ref{PP}) in $\Omega$, we obtain
\begin{equation*}
-\nabla c\cdot \bn\big|_{\partial \Omega}+\alpha \int_{\Omega} c\rd{\bx}= \int_{\Omega} \rho\rd{\bx}.
\end{equation*}
Therefore, the boundary condition of $c$ must be compatible with the equation above.
When $\alpha=0$,  the Neumann boundary condition must satisfy the compatibility condition
\begin{equation*}
-\nabla c\cdot \bn\big|_{\partial \Omega}= \int_{\Omega} \rho_0\rd{\bx}.
\end{equation*}
When $\alpha>0$, if we consider the homogeneous Neumann boundary $\nabla c\cdot \bn \big|_{\partial \Omega}=0$, then 
\begin{equation*}
\alpha \int_{\Omega} c\rd{\bx}=\int_{\Omega} \rho_0\rd{\bx},
\end{equation*}
i.e., the mass of $c$ is also conserved.  

The equation (\ref{conti}) has a variational structure. It is the gradient flow, with respect to the 2-Wasserstein metric, of the free energy functional \cite{Villani03}:
\begin{equation} \label{energy}
\mE(\rho)=\int_{\Omega}\left(\mH(\rho)+\mV\rho+\frac{1}{2}(\mW*\rho)\rho\right)\rd{\bx}.
\end{equation}
Indeed 
\begin{equation*}
\frac{\delta \mE}{\delta \rho}=\xi, \quad \xi:=\mH'(\rho)+\mV+\mW*\rho,
\end{equation*}
hence
\begin{equation}  \label{energy2}
\frac{\rd \mE}{\rd t}=\int_{\Omega}\frac{\delta \mE}{\delta \rho}\partial_t \rho\rd{\bx}=\int_{\Omega}\xi \nabla\cdot \left( \rho \nabla  \xi\right)\rd{\bx}=-\int_{\Omega} \rho |\nabla  \xi|^2\rd{\bx}\leq 0.
\end{equation}

Note that for $\mH$ given in (\ref{HH}), we can define
\begin{equation*}
\mathcal{M}=e^{\log \rho-\xi}=e^{-(\mV+\mW*\rho)}.
\end{equation*}
With this $\mM$, the equation (\ref{conti}) can be written equivalently as
\begin{equation} \label{eqn1}
\partial_t\rho=\nabla \cdot \left( \mM\nabla \left( \frac{\rho}{\mM}\right)\right).
\end{equation}
The boundary condition (\ref{boundary}) becomes
\begin{equation}
\nabla \left(\frac{\rho}{\mM} \right)\cdot \bn=0, \quad \bx\in\partial \Omega.
\label{eq-bc}
\end{equation}
Furthermore, the energy (\ref{energy}) can be written equivalently as
\begin{equation}  \label{energy1}
\mE(\rho)=\int_{\Omega} \left(\rho\log \left( \frac{\rho}{\mathcal{M}}\right)-\rho-\frac{1}{2}(\mW*\rho)\rho\right)\rd{\bx}.
\end{equation}
When written in form (\ref{eqn1}), the original continuity equation (\ref{conti}) can be viewed as a ``variable coefficient'' diffusion equation,  for which we are able to construct efficient positivity-preserving and energy-dissipative schemes, i.e., the discrete analog of (\ref{energy1}) is decreasing in time. 
In the literature there  are many numerical schemes for the Fokker-Planck or Keller-Segel type equations. Recently, significant efforts have been devoted to structure-preserving discretizations to preserve, for instance, the positivity of the solution and energy decay at the semi-discrete or fully discrete level. We summarize some of the recent methods according to their types of time discretization.  The first kind of methods are fully explicit schemes. For a scalar convection-diffusion equation such as \eqref{FP}, there are quite a few explicit positivity-preserving schemes \cite{zhang2013maximum, srinivasan2018positivity, li2018high, qiu2021third}, however with a small time step constraint $\Delta t=\mathcal O(\Delta x^2)$ which  is unacceptable in applications requiring long time simulation. Most importantly, it is usually quite difficult to establish energy dissipation in these positivity-preserving schemes. Some recent explicit schemes, including a finite volume method in \cite{CCH15} and discontinuous Galerkin methods in \cite{sun2018discontinuous, GLY19}, can indeed achieve energy dissipation but only in the semi-discrete setting (i.e., the time is left as continuous). The second kind of methods are implicit or semi-implicit nonlinear schemes. For such schemes, it is possible to preserve positivity and energy dissipation in the fully discrete setting without a small time step constraint \cite{Bubba, BCH20, SX20}, but they often involve nonlinear systems, for which robust nonlinear system solvers are needed.  The third kind of methods are implicit or semi-implicit schemes that are explicitly solvable.  By formulating the continuity equation as in (\ref{eqn1}) and treating $\mM$ explicitly, one can derive a semi-implicit scheme, in which only a linear system needs to be solved without small time-step constraint. Note that this approach is only possible for linear diffusions (for $\mH$ given by (\ref{HH})) and has been used in many previous works, for example, \cite{JY11, LWZ18, HS19, HH20, HLXZ21}. Although details vary, they all use the second order central finite difference for spatial discretization. We use the third approach for the time discretization in this paper. However, the proposed spatial discretization can achieve fourth order accuracy,  which is one of the main novelties.  Furthermore, we can prove the fully discrete positivity and energy decay property for the fourth order spatial discretization under reasonable mesh size and time step constraints. We emphasize that the time step constraint in this paper is a lower bound thus no small time-step constraint like $\Delta t=\mathcal O(\Delta x^2)$ is required. To the best of our knowledge, this is the first  high order spatial discretization that can achieve these properties  for the linear Fokker-Planck and Keller-Segel type equations.  


The rest of this paper organized as follows. In Section \ref{sec-scheme}, we introduce the finite difference schemes, which are obtained by finite difference implementation of continuous finite element method with the linear and quadratic polynomials. In Section \ref{sec-mono}, we show that both the second order and fourth order schemes are monotone. It is well known that the second order central difference or linear finite element method for linear diffusion forms an M-matrix thus is monotone. The fourth order accurate scheme or the finite element method with quadratic polynomial basis no longer gives an M-matrix but monotonicity can still be proved under practical mesh size and time step constraints. In Section \ref{sec-positivity}, we show that monotonicity implies positivity and fully discrete energy dissipation in these schemes. Section \ref{sec-test} includes numerical tests on the Fokker-Planck equation and Keller-Segel system. Concluding remarks are given in Section \ref{sec-remark}. 

\section{Finite difference schemes}
\label{sec-scheme}
In this section, we introduce a simple numerical scheme for equation (\ref{eqn1}) with a first order accurate semi-implicit time discretization. For the spatial discretization, we use second order and fourth order accurate finite difference schemes, which are obtained from finite element method using linear and quadratic polynomial bases respectively. It is well known that a finite element method with suitable quadrature is also a finite difference scheme. In particular, the fourth order accurate finite difference scheme considered here is equivalent to the Lagrangian $Q^2$ (tensor product of polynomials of degree $2$) finite element method with $3$-point Gauss-Lobatto quadrature, which is also known as the $Q^2$ spectral element method \cite{maday1990optimal}. The main novelty here is that we can  prove rigorous positivity-preserving and energy-dissipation properties for the fully discrete scheme,  especially the fourth order spatial discretization in one and two spatial dimensions. 

In this section, we mainly focus on how the finite difference schemes are defined. The explicit form of the schemes will be given in Section \ref{sec-mono}.
 We only consider one and two spatial dimensions in this paper, even though one can also derive these schemes in higher dimensions. 


\subsection{Time discretization}
\label{sec-timed-sovler}

We propose the following semi-implicit discretization of (\ref{eqn1}):
\begin{equation}
\frac{\rho^{n+1}-\rho^n}{\Delta t}=\nabla \cdot \left( \mathcal{M}^n\nabla \left( \frac{\rho^{n+1}}{\mathcal{M}^n}\right)\right), \quad \bx\in \Omega,
\label{eq-timescheme}
\end{equation}
where
\begin{equation*}
\mathcal{M}^n=e^{-(\mV+\mW*\rho^{n})}.
\end{equation*}
The no-flux boundary condition \eqref{eq-bc} is imposed as
\begin{equation} \label{eq-bc1}
\nabla \left(\frac{\rho^{n+1}}{\mM^n} \right)\cdot \bn=0, \quad \bx\in\partial \Omega.
\end{equation}
Note that (\ref{eq-timescheme}) is equivalent to 
\begin{equation*}
\frac{\rho^{n+1}-\rho^n}{\Delta t}=\nabla\cdot (\rho^{n+1}\nabla(\log \rho^{n+1}+\mV+\mW*\rho^n))
\end{equation*}
for discretizing the original equation \eqref{conti}.

We then introduce the auxiliary variables defined as
\begin{equation} \label{gg}
\tilde{g}^{n+1}:=\frac{\rho^{n+1}}{\mathcal M^n}, \quad g^{n}:=\frac{\rho^{n}}{\mathcal M^n}, 
\end{equation}
and write the scheme \eqref{eq-timescheme} as
\begin{equation}
\mathcal M^n\tilde{g}^{n+1}-\Delta t\nabla \cdot \left( \mathcal{M}^n\nabla  \tilde{g}^{n+1} \right)=\mathcal M^n g^n.
\label{eq-timescheme-g}
\end{equation}
Accordingly the boundary condition \eqref{eq-bc1} becomes the homogeneous Neumann boundary for the auxiliary variable: 
$$ \nabla \tilde{g}^{n+1}\cdot \bn=0,\quad\bx\in\partial \Omega.$$
After multiplying a test function $v\in H^1(\Omega)$ to \eqref{eq-timescheme-g} and integration by parts using the boundary condition for $\tilde{g}^{n+1}$, we obtain the variational form of \eqref{eq-timescheme-g}: seek $\tilde{g}^{n+1}\in H^1(\Omega)$ that satisfies
\begin{equation*}
(\mathcal M^n \tilde{g}^{n+1}, v)+\Delta t(\mathcal M^n \nabla  \tilde{g}^{n+1},\nabla  v)=(\mathcal M^n g^{n}, v),\quad\forall v\in H^1(\Omega),
\end{equation*}
where $(v,w):=\int_{\Omega} v w \rd{\bx} $ denotes the $L^2$ inner product in $\Omega$.

\begin{remark}
In the Fokker-Planck equation, $\mM=\exp(-\mV(\bx))$ is a time-independent quantity and (\ref{eq-timescheme}) simplifies to a fully implicit scheme. For brevity, our following presentation will focus on the Keller-Segel equation for which $\mM^n=\exp(c^n(\bx))$. Reduction to the Fokker-Planck case will be commented whenever necessary.
\end{remark}

\subsection{Spatial discretization}
\label{sec-derivation}

We consider a uniform rectangular mesh $\Omega_h$ for the rectangular domain $\Omega$. For any rectangle $e$ in the mesh $\Omega_h$, let $Q^k$ be the space of tensor product polynomials of degree $k$. For instance, in two dimensions, 
$$Q^k(e)=\left\{ p(x,y)=\sum\limits_{i=0}^k\sum\limits_{j=0}^k p_{ij}x^i y^j, (x,y)\in e \right\}.$$ 
Let $V^h$ be the continuous piecewise $Q^k$ polynomial space defined on $\Omega_h$:
\[V^h=\{ v_h(\mathbf x) \in C(\Omega): v_h \big|_e\in Q^k(e),\forall e \in \Omega_h\}.\]
 The $Q^k$ finite element method for \eqref{eq-timescheme-g} is to seek $\tilde{g}^{n+1}_h\in V^h$ satisfying
\begin{equation}
(\mathcal M^n \tilde{g}^{n+1}_h, v_h)+\Delta t(\mathcal M^n \nabla  \tilde{g}^{n+1}_h,\nabla  v_h)=(\mathcal M^n g^{n}_h, v_h),\quad\forall v_h\in V^h,
 \label{zxxeqn-fem}
\end{equation} 
where $\mathcal M^n$ is regarded as a given variable coefficient at time step $n$.
 
The $Q^k$ spectral element method is to replace all integrals in \eqref{zxxeqn-fem} by $m$-point Gauss-Lobatto quadrature with $m\geq k+1$ in each dimension. Standard finite element method error estimates still hold if $m\geq k+1$, i.e., the $Q^k$ spectral element method is $(k+1)$-th order accurate in $L^2$-norm and $k$-th order accurate in $H^1$-norm for smooth solutions of an elliptic equation, see \cite{maday1990optimal}. We consider the simplest choice of quadrature, using $(k+1)$-point Gauss-Lobatto quadrature. Then the method is to find $\tilde{g}^{n+1}_h\in V^h$ satisfying
\begin{equation} \langle \mathcal M^n \tilde{g}^{n+1}_h, v_h\rangle+ \Delta t \langle \mathcal M^n \nabla  \tilde{g}^{n+1}_h , \nabla  v_h\rangle=\langle \mathcal M^n g^{n}_h, v_h\rangle ,\quad\forall v_h\in V^h,
\label{zxxeqn-fem2}
\end{equation}
where $\langle\cdot,\cdot\rangle$ denotes that integrals are replaced by $(k+1)$-point Gauss-Lobatto quadrature. 

For a two-dimensional problem, a $Q^k$ polynomial on a rectangular element $e$ can be represented as a Lagrangian interpolation polynomial at $(k+1)\times (k+1)$ Gauss-Lobatto points, thus all Gauss-Lobatto points in \eqref{zxxeqn-fem2} are not only quadrature nodes but also nodes representing all degrees of freedom. So the $Q^k$ spectral element method \eqref{zxxeqn-fem2} also becomes a finite difference scheme on all Gauss-Lobatto nodes. For $k\geq 3$, the Gauss-Lobatto points are not uniform in each element. For $k\leq 2$, all Gauss-Lobatto nodes on $\Omega_h$ correspond to a uniform grid, see Figure \ref{mesh} for an illustration of the $Q^2$ mesh. Moreover, for $k\geq 2$, such a finite difference scheme can be proved to be $(k+2)$-order accurate in discrete $l^2$-norm for elliptic equations \cite{li2020superconvergence} and for parabolic equations \cite{li2021accuracy}, e.g., the $Q^2$ spectral element method can be regarded as a fourth order accurate finite difference scheme.

 \begin{figure}[h]
 \subfigure[All  quadrature points on $\Omega_h$]{\includegraphics[scale=0.8]{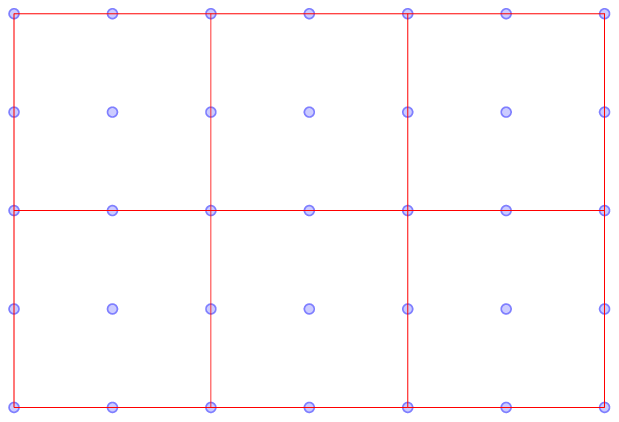} }
 \hspace{.6in}
\subfigure[The corresponding finite difference grid on $\Omega_h$]{\includegraphics[scale=0.8]{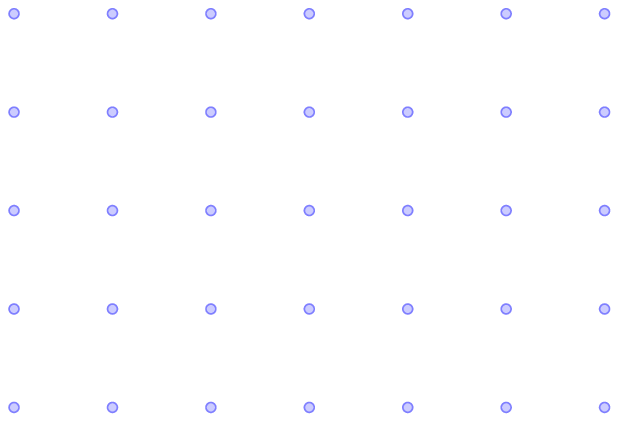}}
\caption{The $3\times 3$ Gauss-Lobatto quadrature points for $Q^2$ finite element method on a uniform mesh  $\Omega_h$ naturally gives a uniform finite difference grid.  }
\label{mesh}
 \end{figure}

In this paper, we only consider the linear case $k=1$ and quadratic case $k=2$, because only in these two cases the schemes can be proved to be positivity-preserving and energy-dissipative. To derive an equivalent matrix form of the scheme \eqref{zxxeqn-fem2}, let $\phi_i(\bx)$ ($i=1,\cdots,N$) be the $Q^k$ Lagrangian basis at all Gauss-Lobatto points $\bx_i$ ($i=1,\cdots,N$) on $\Omega_h$. For any piecewise polynomial $u_h(\bx)\in V^h$, let $u_i=u_h(\bx_i)$. Then $u_h(\bx)=\sum\limits_{i=1}^N u_i\phi_i(\bx)$. Let $\mathbf u=\begin{bmatrix} u_1\\ \vdots \\u_N \end{bmatrix}$ and $w_i$ be the quadrature weight at $\bx_i$.  
 
With the notation above, we have
\begin{equation} \label{part1}
\langle \mathcal M^n \tilde{g}^{n+1}_h, v_h\rangle =\sum_{i=1}^N w_i \mathcal M^n_i \tilde{g}^{n+1}_i v_i =\mathbf v^T W   M^n \mathbf{g}^{n+1},
\end{equation}
where $W=\text{diag}\{w_1, \cdots ,w_N\}$ and $M^n=\text{diag}\{\mathcal M^n_1,\cdots, \mathcal M^n_N\}$ are diagonal matrices. We also have
\begin{equation} \label{part2}
\langle \mathcal M^n \nabla  \tilde{g}^{n+1}_h,\nabla  v_h\rangle= \mathbf v^T S \tilde{\mathbf g}^{n+1},
\end{equation}
where $S$ is the stiffness matrix from the same spectral element method solving a Poisson equation $-\nabla\cdot (\mathcal M^n\nabla u)=f$ in $\Omega$ with homogeneous Neumann boundary condition $\nabla u\cdot \mathbf{n}=0$ on $\partial \Omega$. In other words, $S$ is the  stiffness matrix  in the scheme of seeking $u_h\in V^h$ satisfying
\[ \langle \mathcal M^n \nabla u_h, \nabla v_h\rangle =\langle f, v_h\rangle,\quad\forall v_h\in V^h. \]
We emphasize that the stiffness matrix $S$ depends on $\mathcal M^n_i>0$. It is common knowledge in finite element theory that $S$ satisfies two properties:
\begin{enumerate}
\item $S$ is real symmetric and positive semi-definite.
\item Its null space is one-dimensional and the null vector is $\mathbf 1$.
\end{enumerate}
Here for brevity, we do not give the explicit form of $S$. The complete scheme \eqref{zxxeqn-fem2}  in one and two dimensions will be given in Section \ref{sec-mono}. 

Using (\ref{part1}) and (\ref{part2}), the finite difference scheme (\ref{zxxeqn-fem2}) can be written in the matrix form as: find $\tilde{\mathbf g}^{n+1}$ satisfying
\begin{equation}
\mathbf v^T W M^n \tilde{\mathbf g}^{n+1}+\Delta t   \mathbf v^T S \tilde{\mathbf g}^{n+1}=\mathbf v^T WM^n \mathbf g^{n} ,\quad\forall \mathbf v\in \mathbbm R^N,
 \label{zxxeqn-fd}
\end{equation}
or equivalently
\begin{equation}
W M^n \tilde{\mathbf g}^{n+1}+\Delta t     S \tilde{\mathbf g}^{n+1}=  WM^n\mathbf g^{n} .
 \label{zxxeqn-fd3}
\end{equation} 
Noticing (\ref{gg}), \eqref{zxxeqn-fd3} can also be written as
\begin{equation}
 W\boldsymbol{\rho}^{n+1}+\Delta t  S (M^n)^{-1} \boldsymbol{\rho}^{n+1}=  W\boldsymbol{\rho}^{n} .
 \label{zxxeqn-fd2}
\end{equation}

\begin{remark}
Even though the scheme \eqref{zxxeqn-fd2} for $\boldsymbol{\rho}$ does not involve any auxiliary variable $\mathbf g$, the division by $\mathcal M^n_i$ is still needed
in \eqref{zxxeqn-fd2}. Moreover,  \eqref{zxxeqn-fd3} gives a symmetric positive definite linear system but  \eqref{zxxeqn-fd2} does not. In practice, both can be solved by preconditioned conjugate gradient methods with efficient inversion of Laplacian as a preconditioner, see Section 7 in  \cite{li2020superconvergence} for implementation details. In our numerical tests, we solve the system \eqref{zxxeqn-fd3}  by preconditioned conjugate gradient.
 \end{remark}

\subsection{The full scheme for the Keller-Segel system}

In the case of the Keller-Segel system, in addition to (\ref{zxxeqn-fd3}) (the discretization for (\ref{KS})) one also needs to discretize the equation (\ref{PP}). Here we consider $\alpha>0$ and the homogeneous Neumann boundary condition $\nabla c\cdot {\bf n}|_{\partial \Omega}=0$. We use the same scheme as in (\ref{zxxeqn-fem2}): find $c^n_h\in V^h$ satisfying
\begin{equation}
\label{scheme-c-n}
 \langle \nabla c_h^n, \nabla v_h \rangle +\alpha\langle  c_h^n,  v_h \rangle= \langle  \rho^n,  v_h \rangle,\quad \forall v_h\in V^h.\end{equation}
Similarly as in the previous subsection, \eqref{scheme-c-n} can be written equivalently in the finite difference or matrix form. 

In one dimension, the second order scheme ($k=1$) can be written as 
\[ \frac{1}{h^2}K\mathbf c^n+\alpha \mathbf c^n=\boldsymbol{\rho}^n,\]
and the fourth order scheme ($k=2$) can be written as
\[\frac{1}{h^2} H\mathbf c^n+\alpha \mathbf c^n=\boldsymbol{\rho}^n,\]
where $h$ is the grid spacing and 
\[K=\left(\begin{smallmatrix}
              2 & -2 & & & &\\
              -1 & 2 & -1 & & & \\              
              & -1 & 2 & -1 & & \\
              & & \ddots & \ddots & \ddots & \\
              & & & -1 & 2 &-1 \\
              & & & & -2 & 2 \\
             \end{smallmatrix}\right)_{N\times N},\quad H=\left(\begin{smallmatrix}
     \frac72 &-4 & \frac12 & & &\\
       -1 & 2& -1 & & &\\
    \frac14 &-2& \frac72 &-2 & \frac14 & \\
   &   &  -1 & 2& -1 & \\
   & & &\ddots &\ddots & \ddots\\
&&   &    \frac14 &-2& \frac72 &-2 & \frac14\\
  &    & & & &-1 & 2 &-1\\
&  && & &\frac12 &-4& \frac72 \\
  \end{smallmatrix}\right)_{N\times N}.\]
We emphasize that  $N$ must be odd in the matrix $H$ for the fourth order scheme because the grid points are from Gauss-Lobatto nodes, see Figure \ref{mesh}. 

In two dimensions, let $\mathbf c$ be a two-dimensional array with $\mathbf c_{ij}$ denoting $(i,j)$ point value. Let $vec(\mathbf c)$ be a column vector obtained by rearranging entries in $\mathbf c$ column by column. Then the second order and fourth order schemes can be written, respectively, as 
\[\frac{1}{h^2}(K\otimes K) vec(\mathbf c^n)+\alpha vec(\mathbf c^n)=  \boldsymbol{\rho}^n, \]
and
\[\frac{1}{h^2}(H\otimes H) vec(\mathbf c^n)+\alpha vec(\mathbf c^n)=  \boldsymbol{\rho}^n.\]


To summarize, the full finite difference scheme for the Keller-Segel system (\ref{PP})-(\ref{KS}) is implemented as follows:
\begin{enumerate}
\item At time level $n$, given point values $\rho^n_i$ at each node $\mathbf x_i$, solve \eqref{scheme-c-n} to obtain $c^n_i$, then compute point values of $\mathcal M^n_i=\exp (c^n_i)$. In multiple dimensions, the linear system can be easily and efficiently inverted by eigenvalue decomposition of $K$ and $H$, see  \cite{li2020superconvergence} for details. 
\item With point values $g^n_i:=\frac{\rho^n_i}{\mathcal M^n_i }$, obtain $\tilde g_i^{n+1}$ by solving \eqref{zxxeqn-fd3}. 
\item Update $\rho$ by $\rho^{n+1}_i:=\mathcal M_i^n\tilde  g_i^{n+1}$.
\end{enumerate}

\begin{remark}
The finite difference scheme for the Fokker-Planck equation (\ref{FP}) is simpler: at each node $\mathbf x_i$, first define $\mathcal M_i=\exp (-\mV_i)$.
\begin{enumerate}
\item At time level $n$, given point values $\rho^n_i$, compute $g^n_i:=\frac{\rho^n_i}{\mathcal M_i }$, then obtain $\tilde g_i^{n+1}$ by solving \eqref{zxxeqn-fd3}. 
\item Update $\rho$ by $\rho^{n+1}_i:=\mathcal M_i \tilde g_i^{n+1}$.
\end{enumerate}

\end{remark}


\subsection{Accuracy of the spatial discretization}

For the $Q^2$ finite element method with 3-point Gauss-Lobatto quadrature, it is well known that the standard $L^2$-norm error estimate is third order. However, when regarded as a finite difference scheme at Gauss-Lobatto points, it can be rigorously proved that it is a fourth order accurate scheme in the discrete $l^2$-norm  \cite{li2020superconvergence, li2021accuracy}. In particular, this has been proved for Dirichlet boundary conditions in  \cite{li2020superconvergence}. Only $\mathcal O(h^{3.5})$ can be proved for  Neumann boundary conditions for an operator like $-\nabla (A(\mathbf x)\nabla u)$ where $A(\mathbf x)$ is a positive definite matrix, and the one half order loss is purely due to the mixed second order derivatives. Nonetheless, for the equations we are interested in here, i.e., an operator  like $-\nabla \cdot (a(\mathbf x)\nabla u)$
with a scalar coefficient $a(\mathbf x)$, since there are no mixed second order derivatives involved, the same proof in \cite{li2020superconvergence, li2021accuracy} applies to show that the fourth order accuracy also holds for Neumann boundary conditions of elliptic equations, see \cite{phdthesis} for a detailed proof.f
 So for both \eqref{zxxeqn-fd3} and \eqref{scheme-c-n}, we will refer to the $Q^2$ scheme as the fourth order accurate spatial discretization, i.e., it is a fourth order accurate scheme for solving a steady state problem.

For the $Q^1$ finite element method  with quadrature, it is also well known that it gives the most popular second order central finite difference scheme. However, for the Neumann boundary condition, there is still some subtle difference, which will be reviewed in Remark \ref{remark-accuracy} of Section \ref{sec-mono}.


\section{Monotonicity of the finite difference schemes}
\label{sec-mono}

A matrix $A$ is called {\it monotone} if its inverse has nonnegative entries $A^{-1}\geq 0$. In this section we discuss the monotonicity of the matrix used in the second order and fourth order finite difference schemes \eqref{zxxeqn-fem2}, which is the key intrinsic property implying positivity and energy dissipation.  

In particular, we consider the matrix form \eqref{zxxeqn-fd3}, 
which can also be written as 
\begin{equation}
(M^n +\Delta t    W^{-1} S \mathbf) {\bf \tilde g}^{n+1}=  M^n\mathbf g^{n} .
 \label{zxxeqn-fd4}
\end{equation}
We will discuss  the monotonicity of the matrix $M^n+\Delta t  W^{-1}S$.  For simplicity, we will drop superscript $n$ in $M$ in the rest of this section. 

For the second order scheme, it is well known that it forms an M-matrix thus is monotone, which will be reviewed. For the fourth order scheme, the monotonicity for Dirichlet boundary condition in two dimensions was proved in \cite{li2019monotonicity}. The same results in  \cite{li2019monotonicity} also hold for the Neumann boundary conditions. For completeness, in this section we include a detailed proof for the monotonicity of the fourth order scheme \eqref{zxxeqn-fd4} with the homogeneous Neumann boundary condition  for $\tilde{g}^{n+1}$, which is equivalent to the no-flux boundary condition for $\rho^{n+1}$. 

\subsection{M-matrices}
 The only viable tool in the literature to prove monotonicity is to use M-matrices. 
 Nonsingular M-matrices are monotone matrices and there are many equivalent definitions or characterizations of M-matrices, see 
\cite{plemmons1977m}. 
 By condition $K_{35}$ in \cite{plemmons1977m}, a sufficient and necessary characterization  is,
 \begin{theorem}
\label{rowsumcondition-thm2}
 For a real square matrix $A$  with positive diagonal entries and non-positive off-diagonal entries, $A$ is a nonsingular M-matrix if  and only if there exists a positive diagonal matrix 
 $D$ such that $AD$ has all
positive row sums.
 \end{theorem}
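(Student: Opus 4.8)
The statement to prove is Theorem \ref{rowsumcondition-thm2}, a characterization of nonsingular M-matrices: for a real square matrix $A$ with positive diagonal and non-positive off-diagonal entries (a so-called $Z$-matrix), $A$ is a nonsingular M-matrix if and only if there is a positive diagonal matrix $D$ with $AD$ having all positive row sums. The attribution to condition $K_{35}$ in \cite{plemmons1977m} signals that the author intends to cite this rather than prove it from scratch; but a self-contained plan is still worthwhile.

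My plan is to prove the two directions separately, using one of the standard equivalent definitions of a nonsingular M-matrix as the working definition — say, that $A$ is a $Z$-matrix of the form $A = sI - B$ with $B \geq 0$ and $s > \rho(B)$ (spectral radius), equivalently that $A$ is a $Z$-matrix that is inverse-positive ($A^{-1} \geq 0$). First I would handle the easy direction ($\Rightarrow$): if $A$ is a nonsingular M-matrix, then $A^{-1} \geq 0$, and moreover $A^{-1}$ has no zero row (since $A^{-1}$ is nonsingular, each row is nonzero and nonnegative). Then set $\mathbf{d} = A^{-1}\mathbf{1} > 0$ componentwise and let $D = \mathrm{diag}(\mathbf{d})$; this $D$ is positive diagonal, and the vector of row sums of $AD$ is $AD\mathbf{1} = A\mathbf{d} = \mathbf{1} > 0$. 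That settles necessity.

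For the converse ($\Leftarrow$), suppose $A$ is a $Z$-matrix and $D$ is a positive diagonal matrix with $AD\mathbf{1} > 0$ componentwise. Note $AD$ is still a $Z$-matrix (right-multiplication by a positive diagonal matrix scales columns by positive numbers, preserving signs of entries). So it suffices to show $AD$ is a nonsingular M-matrix, because then $(AD)^{-1} \geq 0$ gives $A^{-1} = D(AD)^{-1} \geq 0$, and combined with $A$ being a $Z$-matrix this makes $A$ a nonsingular M-matrix. So the heart of the matter is: a $Z$-matrix $C := AD$ with strictly positive row sums is a nonsingular M-matrix. Write $C = sI - B$ where $s = \max_i C_{ii} > 0$ and $B = sI - C \geq 0$ (off-diagonals of $C$ are $\leq 0$ so $B$ has nonnegative off-diagonals, and diagonal entries $s - C_{ii} \geq 0$). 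The strictly-positive-row-sum condition $C\mathbf{1} > 0$ says $B\mathbf{1} < s\mathbf{1}$, i.e. every row sum of the nonnegative matrix $B$ is strictly less than $s$. Since the maximum row sum is an upper bound for $\rho(B)$ (a standard consequence of Perron–Frobenius / the fact that $\|B\|_\infty$ bounds $\rho(B)$), we get $\rho(B) \leq \max_i (B\mathbf{1})_i < s$, hence $C = sI - B$ is a nonsingular M-matrix by definition. Then unwind as above.

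The main obstacle — really the only nontrivial ingredient — is the fact that for a nonnegative matrix $B$, $\rho(B) \leq \|B\|_\infty = \max_i \sum_j B_{ij}$, which is what converts the row-sum hypothesis into a spectral-radius bound; this is elementary (it follows from submultiplicativity of the induced $\infty$-norm, $\rho(B)^k \leq \|B^k\|_\infty \leq \|B\|_\infty^k$) but is the conceptual crux. A secondary point requiring a little care is checking in the easy direction that $\mathbf{d} = A^{-1}\mathbf{1}$ is \emph{strictly} positive, not merely nonnegative: this uses that $A^{-1} \geq 0$ is nonsingular, so no row of $A^{-1}$ is entirely zero, and a nonnegative nonzero row dotted with $\mathbf{1}$ gives a strictly positive entry. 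Everything else is bookkeeping with the equivalence between "$Z$-matrix plus inverse-positive" and the $sI - B$ form, which I would simply quote from \cite{plemmons1977m} rather than reprove.
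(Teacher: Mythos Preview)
Your argument is correct. Note, however, that the paper does not actually prove this theorem: it simply states it and attributes it to condition $K_{35}$ in \cite{plemmons1977m}, treating it as a known characterization to be quoted rather than derived. Your proposal therefore goes well beyond what the paper does, supplying a clean self-contained proof of both directions. The forward direction via $\mathbf{d}=A^{-1}\mathbf{1}$ and the backward direction via the $\|\cdot\|_\infty$ bound on $\rho(B)$ for $C=AD=sI-B$ are both standard and sound; the only small points to watch are exactly the ones you flagged (strict positivity of $A^{-1}\mathbf{1}$, and that right-multiplying by a positive diagonal matrix preserves the $Z$-matrix sign pattern and positive diagonal). What your approach buys is self-containment; what the paper's approach buys is brevity, since the result is classical and the paper's focus lies elsewhere.
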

The following is a convenient sufficient but not necessary characterization of nonsingular M-matrices \cite{li2019monotonicity}:
\begin{theorem}
\label{rowsumcondition-thm}
For a real square matrix $A$  with positive diagonal entries and non-positive off-diagonal entries, $A$ is a nonsingular M-matrix if  all the row sums of $A$ are non-negative and at least one row sum is positive. 
\end{theorem}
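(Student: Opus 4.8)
The plan is to verify directly the three hypotheses in the statement of Theorem~\ref{rowsumcondition-thm}: that $A$ has positive diagonal, non-positive off-diagonal entries, all row sums non-negative, and at least one row sum strictly positive. The key reduction is to reduce to the well-known characterization of nonsingular M-matrices recorded in Theorem~\ref{rowsumcondition-thm2} (condition $K_{35}$ of \cite{plemmons1977m}). First I would set $D=I$ and observe that, under the standing sign hypotheses, if every row sum of $A$ is non-negative then $AD=A$ has all row sums non-negative, which is weaker than what $K_{35}$ demands; so the bulk of the work is to perturb $D$ slightly away from the identity to upgrade the non-negative row sums to strictly positive ones while preserving the sign pattern of the off-diagonal entries.

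The main construction I would carry out is as follows. Let $P$ be the (nonempty) set of indices $i$ whose row sum is strictly positive, and let $Z$ be the complementary set of indices with zero row sum. Choose $D=\mathrm{diag}(d_1,\dots,d_N)$ with $d_i = 1+\varepsilon$ for $i\in P$ and $d_i=1$ for $i\in Z$, for a small parameter $\varepsilon>0$ to be fixed. The $i$-th row sum of $AD$ is $\sum_j a_{ij} d_j = \left(\sum_j a_{ij}\right) + \varepsilon \sum_{j\in P} a_{ij}$. For $i\in P$ the first term is a fixed positive number, so for $\varepsilon$ small enough the whole expression stays positive. For $i\in Z$ the first term vanishes, so the row sum of $AD$ equals $\varepsilon\sum_{j\in P} a_{ij}$; since $a_{ii}>0$, this is positive provided $i\in P$, but $i\in Z$, so we instead need $\sum_{j\in P} a_{ij}>0$, i.e.\ some off-diagonal entry $a_{ij}$ with $j\in P$ is strictly negative. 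This is not automatic from the hypotheses alone, so the genuine content of the proof is an \emph{irreducibility / connectivity} argument: one shows that the directed graph associated to the sign pattern of $A$ is such that every zero-row-sum index can "see" a positive-row-sum index, and then iterates the perturbation over layers (indices at graph-distance $1,2,3,\dots$ from $P$), scaling $\varepsilon$ hierarchically down each layer so that contributions from closer layers dominate. The standard way to package this cleanly is: if $A$ is irreducible the claim follows immediately from the irreducible-diagonally-dominant criterion for M-matrices; if $A$ is reducible, argue block by block on its Frobenius normal form, noting that every diagonal block either is itself irreducibly diagonally dominant or inherits a strictly positive row sum from coupling to another block.

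The step I expect to be the main obstacle is precisely this connectivity bookkeeping: making sure that "at least one row sum is positive" really does propagate through the whole matrix via the off-diagonal coupling, and handling the reducible case without hand-waving. In the application to the finite difference matrices of this paper the relevant graph is connected (it is the grid graph), so in practice the irreducible case suffices and the argument collapses to a one-line invocation of the irreducible weak diagonal dominance criterion; but to prove Theorem~\ref{rowsumcondition-thm} as stated, with no irreducibility assumption, the cleanest route is to cite the equivalence $K_{35}\Leftrightarrow$ M-matrix from \cite{plemmons1977m} and exhibit the diagonal $D$ above, deferring the layered construction of $\varepsilon$ to a short inductive lemma on the number of strongly connected components. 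Once $D$ is in hand, monotonicity $A^{-1}\ge 0$ is immediate since nonsingular M-matrices are monotone.
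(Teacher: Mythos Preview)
The paper does not actually prove this theorem; it simply states it with a citation to \cite{li2019monotonicity}. So there is no ``paper's own proof'' to compare against, and your task reduces to producing a correct argument from scratch.

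There is a genuine gap in your proposal, and it is exactly the one you flagged as the main obstacle: the connectivity bookkeeping cannot be completed, because the theorem \emph{as stated} is false without an irreducibility hypothesis. A concrete counterexample is
\[
A=\begin{pmatrix} 1 & -1 & 0\\ -1 & 1 & 0\\ 0 & 0 & 1\end{pmatrix}.
\]
This matrix has positive diagonal entries, non-positive off-diagonal entries, row sums $0,0,1$ (all non-negative, one strictly positive), yet $\det A=0$, so $A$ is singular and certainly not a nonsingular M-matrix. Your ``layered $\varepsilon$'' construction breaks down here precisely because the zero-row-sum block $\{1,2\}$ has no off-diagonal coupling to the positive-row-sum index $3$; no diagonal scaling $D$ can make all row sums of $AD$ positive, since the first two rows of $AD$ always sum to zero regardless of $D$. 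The inductive lemma on strongly connected components that you propose to ``defer'' simply does not exist in the generality you need.

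What is true, and what the cited reference presumably states, is the version with $A$ assumed irreducible (equivalently: the directed graph of nonzero off-diagonal entries is strongly connected). Under that extra hypothesis your instinct is correct and the argument collapses to the classical fact that an irreducible, weakly diagonally dominant Z-matrix with at least one strict dominance is a nonsingular M-matrix. You already noted that every matrix to which the paper applies this theorem (the finite difference operators on a connected grid) is irreducible, so the applications are safe; but your write-up should state the irreducibility hypothesis explicitly rather than attempt a doomed proof of the general statement.
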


 \subsection{The second order  scheme in one dimension}
 \label{sec-1d-2ndscheme}
In the one dimensional case, assume the domain is $\Omega=[-L, L]$ and 
 the uniform grid points are $-L=x_1<x_2<\cdots<x_N=L$ with grid spacing $h$.  
  Following derivations in Section 7 of \cite{li2020superconvergence}, it is straightforward to show that the linear finite element method \eqref{zxxeqn-fd4} with a variable coefficient $\mathcal M>0$ 
  can be explicitly written as:
\begin{equation}
\resizebox{\hsize}{!}{$
\begin{split}
& \mathcal M_1 \tilde{g}^{n+1}_1+\Delta t  \frac{(\mathcal M_{1}+\mathcal M_2)\tilde{g}^{n+1}_1-(\mathcal M_{1}+\mathcal M_{2})\tilde{g}^{n+1}_{2}}{h^2   }=\mathcal M_1 g^{n}_1;\\
&  \mathcal M_i \tilde{g}^{n+1}_i + \Delta t  \frac{-(\mathcal M_{i-1}+\mathcal M_{i})\tilde{g}^{n+1}_{i-1}+(\mathcal M_{i-1}+2\mathcal M_i+\mathcal M_{i+1})\tilde{g}^{n+1}_i-(\mathcal M_{i}+\mathcal M_{i+1})\tilde{g}^{n+1}_{i+1}}{2h^2}\\  \\
&=  \mathcal M_i g^{n}_i, \quad  i=2,\cdots, N-1;\\
&\mathcal M_N  \tilde{g}^{n+1}_N+\Delta t  \frac{-(\mathcal M_{N-1}+\mathcal M_{N})\tilde{g}^{n+1}_{N-1}+(\mathcal M_{N-1}+\mathcal M_N)\tilde{g}^{n+1}_N}{h^2  }= \mathcal M_N g^{n}_N.
\end{split}
$}
\label{2nd-fd-fem}
\end{equation} 
It is easy to see that $M^n+\Delta t  W^{-1}S$ is a tridiagonal matrix satisfying Theorem \ref{rowsumcondition-thm}, thus is a nonsingular M-matrix and monotone.

Now for the ease of presentation of the scheme, we will abuse notation by introducing ghost point values as $\tilde g_0^{n+1}:=\tilde g_2^{n+1}$, $\tilde g_{N+1}^{n+1}:=\tilde g_{N-1}^{n+1}$ and $\mathcal M_0:=\mathcal M_2$,
$\mathcal M_{N+1}:=\mathcal M_{N-1}$. Then the scheme can be equivalently written as
\begin{equation}
\resizebox{\hsize}{!}{$
\begin{split}
&\mathcal M_i \tilde{g}^{n+1}_i+\Delta t  \frac{-(\mathcal M_{i-1}+\mathcal M_{i})\tilde{g}^{n+1}_{i-1}+(\mathcal M_{i-1}+2\mathcal M_i+\mathcal M_{i+1})\tilde{g}^{n+1}_i-(\mathcal M_{i}+\mathcal M_{i+1})\tilde{g}^{n+1}_{i+1}}{2h^2}\\
&=  \mathcal M_i g^{n}_i,   \quad i=1,\cdots, N.
\end{split}$}
\end{equation}
We emphasize that the scheme still has a different structure at the boundary points, and here ghost points are used only for a uniform expression of the scheme. 
In actual implementation, there are no ghost points.

\begin{remark}
\label{remark-accuracy}
One popular finite difference method to solve \eqref{eq-timescheme} is to apply the central finite difference as 
 \[\frac{\rho^{n+1}_i-\rho^n_i}{\Delta t}=\frac{F^{n+1}_{i+\frac12}-F^{n+1}_{i-\frac12}}{h},\]
 with the flux term defined by
 \[F^{n+1}_{i+\frac12}=\frac{1}{h} \frac{\mathcal M_i+\mathcal M_{i+1}}{2}
 \left(\frac{\rho^{n+1}_{i+1}}{\mathcal M_{i+1}}-\frac{\rho^{n+1}_{i}}{\mathcal M_{i}}\right),\]
 which is equivalent to 
 \[\tilde{g}^{n+1}_{i}-g^{n}_{i}=\frac{\Delta t}{h\mathcal M_i}(G^{n+1}_{i+\frac12}-G^{n+1}_{i-\frac12}),\quad  G^{n+1}_{i+\frac12}=\frac{1}{h} \frac{\mathcal M_i+\mathcal M_{i+1}}{2}
 \left(\tilde{g}^{n+1}_{i+1} -\tilde{g}^{n+1}_{i}\right).\]
For approximating no-flux boundary condition, if simply setting $G^{n+1}_{\frac12}=G^{n+1}_{N+\frac12}=0$, then the scheme becomes
\begin{equation}
\resizebox{\hsize}{!}{$
\begin{split}
&  \mathcal M_1 \tilde{g}^{n+1}_1+\Delta t  \frac{(\mathcal M_{1}+\mathcal M_2)\tilde{g}^{n+1}_1-(\mathcal M_{1}+\mathcal M_{2})\tilde{g}^{n+1}_{2}}{2h^2   }=\mathcal M_1 g^{n}_1;\\
& \mathcal M_i \tilde{g}^{n+1}_i + \Delta t  \frac{-(\mathcal M_{i-1}+\mathcal M_{i})\tilde{g}^{n+1}_{i-1}+(\mathcal M_{i-1}+2\mathcal M_i+\mathcal M_{i+1})\tilde{g}^{n+1}_i-(\mathcal M_{i}+\mathcal M_{i+1})\tilde{g}^{n+1}_{i+1}}{2h^2}\\
&=  \mathcal M_i g^{n}_i, \quad i=2,\cdots, N-1;\\
&\mathcal M_N  \tilde{g}^{n+1}_N + \Delta t  \frac{-(\mathcal M_{N-1}+\mathcal M_{N})\tilde{g}^{n+1}_{N-1}+(\mathcal M_{N-1}+\mathcal M_N)\tilde{g}^{n+1}_N}{2h^2  }= \mathcal M_N g^{n}_N.
\end{split}$}
\label{2nd-fd-center}
\end{equation} 
If using the same grid $-L=x_1<x_2<\cdots<x_N=L$ with grid spacing $h$, the scheme (\ref{2nd-fd-center}) is the same as \eqref{2nd-fd-fem} at interior points. For boundary points, \eqref{2nd-fd-center} is only first order accurate, which can be easily verified for constant coefficient case $\mathcal M_i\equiv 1$. If redefining $g_i$ and $\mathcal M_i$ as point values at a staggered uniform grid  $-L+\frac{h}{2}=x_1<x_2<\cdots<x_N=L-\frac{h}{2}$ with spacing $h$ (as has been done in most papers in the past, e.g. \cite{HH20}), the scheme  \eqref{2nd-fd-center} exhibits second order accuracy in many numerical tests. However, even on the staggered grid, the local truncation error of   \eqref{2nd-fd-center}  at $x_1=-L+\frac{h}{2}$ and $x_N=L-\frac{h}{2}$ is only first order,
thus it is quite difficult to rigorously prove the second order accuracy of \eqref{2nd-fd-center} by conventional finite difference analysis. 
On the other hand,  it can be easily proved that \eqref{2nd-fd-fem}  is second order accurate by standard finite element analysis. 
\end{remark}

 \subsection{The second order scheme in multiple dimensions}
 \label{sec-2D-2ndscheme}
In the two dimensional case, assume the domain is $\Omega=[-L, L]\times[-L,L]$ with 
an uniform $N\times N$ grid point with spacing $h$, which is a tensor product of the grid $-L=x_1<x_2<\cdots<x_N=L$.  
Let $\mathbf g$ be a $N\times N$ matrix with $g_{ij}$ denoting the point value at the $(i,j)$ grid point.  

We introduce the ghost values for $ i, j=1,\cdots, N$ as:
\begin{align*}
&\tilde g_{0,j}^{n+1}:=\tilde g_{2,j}^{n+1}, \ \tilde g_{N+1,j}^{n+1}:=\tilde g_{N-1,j}^{n+1},\  \tilde g_{i,0}^{n+1}:=\tilde g_{i,2}^{n+1}, \ \tilde g_{i,N+1}^{n+1}:=\tilde g_{i,N-1}^{n+1},\\
&\mathcal M_{0,j} :=\mathcal M_{2,j}, \ \mathcal M_{N+1, j} :=\mathcal M_{N-1,j},\  \mathcal M_{i,0} :=\mathcal M_{i,2}, \ \mathcal M_{i,N+1} :=\mathcal M_{i, N-1}.
\end{align*}

Then the  Lagrangian $Q^1$ finite element method  with 2-point Gauss Lobatto quadrature \eqref{zxxeqn-fem2}   can be explicitly expressed as 
\begin{align*}
\resizebox{\hsize}{!}{$\Delta t  \frac{-(\mathcal M_{i-1,j}+\mathcal M_{i j})\tilde{g}^{n+1}_{i-1,j}+(\mathcal M_{i-1,j}+2\mathcal M_{i j}+\mathcal M_{i+1,j})\tilde{g}^{n+1}_{ij}-(\mathcal M_{ij}+\mathcal M_{i+1,j})\tilde{g}^{n+1}_{i+1,j}}{2h^2}$}\\
\resizebox{\hsize}{!}{$+\Delta t  \frac{-(\mathcal M_{i,j-1}+\mathcal M_{i j})\tilde{g}^{n+1}_{i,j-1}+(\mathcal M_{i,j-1}+2\mathcal M_{i j}+\mathcal M_{i,j+1})\tilde{g}^{n+1}_{ij}-(\mathcal M_{ij}+\mathcal M_{i,j+1})\tilde{g}^{n+1}_{i,j+1}}{2h^2}$}\\
+  \mathcal M_{ij} \tilde{g}^{n+1}_{ij}=  \mathcal M_{ij} g^{n}_{ij}, \quad \forall  i, j=1,\cdots, N. 
 \end{align*}
 It is easy to see that $M^n+\Delta t  W^{-1}S$ is a matrix satisfying Theorem \ref{rowsumcondition-thm}, thus is a nonsingular M-matrix and monotone.
 
\begin{remark}
The scheme in three dimensional case can be similarly written and it is also straightforward to verify that $M^n+\Delta t  W^{-1}S$ is a matrix satisfying Theorem \ref{rowsumcondition-thm}, thus is a nonsingular M-matrix and monotone.
\end{remark}

\begin{remark}
We have seen that using the formulation (\ref{eqn1}) the second order finite difference scheme with a semi-implicit time discretization is unconditionally monotone thus always positivity-preserving and energy-dissipative (details to be given in Section~\ref{sec-positivity}). This is true even for blow-up solutions. As a comparison, for the Keller-Segel equation one can also use the formulation \eqref{KS} and apply the second order finite difference for both convection and diffusion operators with a semi-implicit time discretization, but the monotonicity can only be proved under a mesh constraint $h \|\nabla c\|_\infty \leq 2$. This is one of the key advantages of solving (\ref{eqn1}) instead of \eqref{KS}. 
\end{remark}

\subsection{Lorenz's condition for monotonicity}

For high order accurate schemes, especially for a variable coefficient problem, the stiffness matrices are no longer M-matrices. Yet, 
it is possible to show that the stiffness matrix is a product of two or more M-matrices thus still monotone \cite{li2019monotonicity, cross2020monotonicity} by using  the Lorenz's Theorem in \cite{lorenz1977inversmonotonie}, which will be briefly reviewed in this subsection. 

\begin{definition}
Let $\mathcal N = \{1,2,\dots,n\}$. For $\mathcal N_1, \mathcal N_2 \subset \mathcal N$, we say a matrix $A$ of size $n\times n$ connects $\mathcal N_1$ with $\mathcal N_2$ if 
\begin{equation}
\forall i_0 \in \mathcal N_1, \exists i_r\in \mathcal N_2, \exists i_1,\dots,i_{r-1}\in \mathcal N \quad \mbox{s.t.}\quad  a_{i_{k-1}i_k}\neq 0,\quad k=1,\cdots,r.
\label{condition-connect}
\end{equation}
If perceiving $A$ as a directed graph adjacency matrix of vertices labeled by $\mathcal N$, then \eqref{condition-connect} simply means that there exists a directed path from any vertex in $\mathcal N_1$ to at least one vertex in $\mathcal N_2$.  
In particular, if $\mathcal N_1=\emptyset$, then any matrix $A$  connects $\mathcal N_1$ with $\mathcal N_2$.
\end{definition}

Given a square matrix $A$ and a column vector $\mathbf x$, we define
\[\mathcal N^0(A\mathbf x)=\{i: (A\mathbf x)_i=0\},\quad 
\mathcal N^+(A\mathbf x)=\{i: (A\mathbf x)_i>0\}.\]

Given a matrix $A=[a_{ij}]\in \mathbbm{R}^{n\times n}$, define its diagonal, off-diagonal, positive and negative off-diagonal parts as $n\times n$ matrices $A_d$, $A_a$, $A_a^+$, $A_a^-$:
\[(A_d)_{ij}=\begin{cases}
a_{ii}, & \mbox{if} \quad i=j\\
0, & \mbox{if} \quad  i\neq j
\end{cases}, \quad A_a=A-A_d,
\]
\[(A_a^+)_{ij}=\begin{cases}
a_{ij}, & \mbox{if} \quad a_{ij}>0,\quad i\neq j\\
0, & \mbox{otherwise}.
\end{cases}, \quad A_a^-=A_a-A^+_a.
\]

The following two results were proved in  \cite{lorenz1977inversmonotonie}. See also \cite{li2019monotonicity} for a detailed proof.
\begin{theorem}\label{thm2}
If $A\leq M_1M_2\cdots M_k L$ where $M_1, \cdots, M_k$ are nonsingular M-matrices and $L_a\leq 0$,  and there exists a nonzero vector $\mathbf e\geq 0$ such that one of the matrices $M_1, \cdots, M_k , L$ connects $\mathcal N^0(A\mathbf e)$ with $\mathcal N^+(A\mathbf e)$. Then $M_k^{-1}M_{k-1}^{-1}\cdots M_1^{-1} A$ is an M-matrix, thus $A$ is a product of $k+1$ nonsingular M-matrices and $A^{-1}\geq 0$. 
\end{theorem}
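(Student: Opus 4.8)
The plan is to reduce everything to one claim: the matrix $B:=M_k^{-1}M_{k-1}^{-1}\cdots M_1^{-1}A$ is a nonsingular M-matrix. Granting this, the rest follows at once: a nonsingular M-matrix is monotone, so $B^{-1}\ge 0$; each $M_i$ is a nonsingular M-matrix, so $M_i^{-1}\ge 0$; and $A=M_1M_2\cdots M_kB$ then exhibits $A$ as a product of $k+1$ nonsingular M-matrices, whence $A^{-1}=B^{-1}M_k^{-1}\cdots M_1^{-1}\ge 0$ as a product of nonnegative matrices. So I would devote the proof to $B$.

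Step 1 (sign pattern of $B$). Multiplying the entrywise inequality $A\le M_1M_2\cdots M_kL$ on the left, successively, by the nonnegative matrices $M_1^{-1},\dots,M_k^{-1}$ — legitimate since left multiplication by a nonnegative matrix preserves entrywise inequalities and $M_i^{-1}M_i=I$ — gives $B\le L$. Because $L_a\le 0$, this forces $B_{ij}\le L_{ij}\le 0$ for $i\ne j$, so $B$ is a Z-matrix. By Theorem~\ref{rowsumcondition-thm2}, taking $D=\mathrm{diag}(\mathbf x)$, to prove that $B$ is a nonsingular M-matrix it suffices to produce a strictly positive vector $\mathbf x>0$ with $B\mathbf x>0$ componentwise; and such an $\mathbf x$, together with the Z-sign pattern, automatically yields $B_{ii}>0$, so the positive-diagonal hypothesis of that theorem is met.

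Step 2 (using $\mathbf e$ and the connection hypothesis). Since each $M_i^{-1}\ge 0$ and $A\mathbf e\ge 0$ (automatic in our application, where $A$ has nonnegative row sums and one takes $\mathbf e=\mathbf 1$), the chain $\mathbf u_0:=A\mathbf e$, $\mathbf u_j:=M_j^{-1}\mathbf u_{j-1}$ ($j=1,\dots,k$) stays nonnegative, so $B\mathbf e=\mathbf u_k\ge 0$. If $A\mathbf e$ has no zero entry — in particular when $\mathcal N^0(A\mathbf e)=\emptyset$, the case relevant to this paper — then $B\mathbf e>0$ already, since a nonsingular M-matrix has no zero row in its inverse, and one simply takes $\mathbf x=\mathbf e$. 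In general one must eliminate the degenerate index set $\mathcal N^0(B\mathbf e)$, and this is exactly what the connection hypothesis provides. One tracks how the nonnegative maps $M_j^{-1}$ can turn zero components into positive ones: for a Z-matrix that is a nonsingular M-matrix, $(M_j^{-1})_{pq}>0$ iff $q$ is reachable from $p$ in the directed graph of $M_j$; so a directed path realizing the connection of $\mathcal N^0(A\mathbf e)$ with $\mathcal N^+(A\mathbf e)$ inside some $M_j$ (or, via $B\le L$ and $L_a\le 0$, inside the graph of $L$ and hence of $B$) either forces the corresponding component of $B\mathbf e$ to be positive, or yields a directed path in $B$ from $\mathcal N^0(B\mathbf e)$ to $\mathcal N^+(B\mathbf e)$. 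Perturbing $\mathbf x:=\mathbf e$ by a small increment supported along such a path makes the degenerate component of $B\mathbf x$ strictly positive while, by the Z-sign pattern, only ``borrowing'' from components already positive; iterating over the finitely many degenerate indices yields the desired $\mathbf x>0$ with $B\mathbf x>0$.

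The main obstacle is precisely the combinatorics of Step 2: rigorously transferring directed-graph connectivity from the factors $M_1,\dots,M_k,L$ to the transformed matrix $B$ and to the zero/positive index sets of $B\mathbf e$ versus $A\mathbf e$, and ensuring the path-based perturbation terminates without undoing positivity already secured. This is the core of Lorenz's argument in \cite{lorenz1977inversmonotonie} (see also \cite{li2019monotonicity}) and requires a careful case split on which of $M_1,\dots,M_k,L$ realizes the connection and on whether $\mathbf e$ is strictly positive. By contrast, Step 1 and the reduction in the first paragraph are routine.
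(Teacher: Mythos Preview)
The paper does not prove Theorem~\ref{thm2}: it states the result and refers to \cite{lorenz1977inversmonotonie} and \cite{li2019monotonicity} for the proof. So there is no in-paper argument to compare against; your outline already offers more than the paper does.

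Your reduction and Step~1 are correct and complete: left-multiplying $A\le M_1\cdots M_kL$ successively by the nonnegative matrices $M_i^{-1}$ gives $B:=M_k^{-1}\cdots M_1^{-1}A\le L$, so $B$ is a Z-matrix, and everything reduces to producing $\mathbf x>0$ with $B\mathbf x>0$ so that Theorem~\ref{rowsumcondition-thm2} applies. This is precisely the skeleton of Lorenz's argument.

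Step~2, as you yourself acknowledge, is only a sketch. Two points would need care in a full write-up. First, the hypothesis is only $\mathbf e\ge 0$, $\mathbf e\neq\mathbf 0$, so the perturbation must also land on a \emph{strictly} positive $\mathbf x$ for $D=\mathrm{diag}(\mathbf x)$ to be admissible; you mention this at the very end but do not resolve it. Second, the connection hypothesis really does bifurcate: if some $M_j$ realizes the connection, one uses that $(M_j^{-1})_{pq}>0$ whenever there is a directed path from $p$ to $q$ in the graph of $M_j$, so positivity in $A\mathbf e$ propagates to new indices after multiplying by $M_j^{-1}$; if $L$ realizes it, one uses $B\le L$ to inherit the off-diagonal edges of $L$ inside $B$ and then perturbs along paths in $B$ itself. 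You gesture at both mechanisms, but the bookkeeping---that $\mathcal N^0(B\mathbf e)\subset\mathcal N^0(A\mathbf e)$ after the $M_j$ case, and that the path-perturbation in the $L$ case terminates without destroying previously secured positivity---is exactly the content of the references you (and the paper) cite.

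In short: your structure is correct and coincides with the standard proof; the paper itself supplies nothing beyond the citations, so there is no divergence to report.
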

\begin{theorem}[Lorenz's condition] \label{thm3}
If $A^-_a$ has a decomposition: $A^-_a = A^z + A^s = (a_{ij}^z) + (a_{ij}^s)$ with $A^s\leq 0$ and $A^z \leq 0$, such that 
\begin{subequations}
 \label{lorenz-condition}
\begin{align}
& A_d + A^z \textrm{ is a nonsingular M-matrix},\label{cond1}\\ 
& A^+_a \leq A^zA^{-1}_dA^s \textrm{ or equivalently } \forall a_{ij} > 0 \textrm{ with } i \neq j, a_{ij} \leq \sum_{k=1}^n a_{ik}^za_{kk}^{-1}a_{kj}^s,\label{cond2}\\
& \exists \mathbf e \in \mathbbm{R}^n\setminus\{\mathbf 0\}, \mathbf e\geq 0 \textrm{ with $A\mathbf e \geq 0$ s.t. $A^z$ or $A^s$  connects $\mathcal N^0(A\mathbf e)$ with $\mathcal N^+(A\mathbf e)$.} \label{cond3}
\end{align}
\end{subequations}
Then $A$ is a product of two nonsingular M-matrices thus $A^{-1}\geq 0$.
\end{theorem}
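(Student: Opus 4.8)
The final statement to prove is Theorem~\ref{thm3} (Lorenz's condition), though the excerpt says this and Theorem~\ref{thm2} "were proved in \cite{lorenz1977inversmonotonie}" with "a detailed proof" in \cite{li2019monotonicity}. So the plan is to derive Theorem~\ref{thm3} from Theorem~\ref{thm2}.

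\textbf{Overall approach.} The plan is to deduce Theorem~\ref{thm3} from Theorem~\ref{thm2} by exhibiting an explicit factorization $A \le M_1 M_2 L$ with $M_1, M_2$ nonsingular M-matrices and $L_a \le 0$, and then checking the connectivity hypothesis carries over. The natural candidates, given the data $A^-_a = A^z + A^s$, are $M_1 := A_d + A^z$ (a nonsingular M-matrix by \eqref{cond1}), $M_2 := A_d$ (a nonsingular M-matrix since $A_d > 0$ diagonal --- this uses that $A$ has positive diagonal, which is implicit from \eqref{cond1} forcing $A_d + A^z$ to have positive diagonal and $A^z \le 0$), and $L := I + A_d^{-1} A^+_a + A_d^{-1} A^s$ (call the last two pieces the off-diagonal part, which is $\le 0$ iff $A^+_a \le -A^s$, automatically true since $A^+_a \ge 0 \ge A^s$ entrywise).

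\textbf{Key steps in order.} First, I would expand the product $M_1 M_2 L = (A_d + A^z) A_d (I + A_d^{-1}A^+_a + A_d^{-1}A^s) = (A_d + A^z)(A_d + A^+_a + A^s)$. Expanding: $A_d^2 + A_d A^+_a + A_d A^s + A^z A_d + A^z A_d^{-1} A^+_a \cdot A_d$... let me be careful --- actually $M_1 M_2 L = (A_d+A^z)(A_d + A^+_a + A^s) = A_d^2 + A_d A^+_a + A_d A^s + A^z A_d + A^z A^+_a \cdot$... no. The cleanest route: write $M_1 M_2 L = (A_d + A^z)(A_d + A^+_a + A^s)$ and compare with $A_d^2 + A_d(A^+_a + A^s + A^z) = A_d(A_d + A_a) = A_d A$ — wait, $A_a = A^+_a + A^-_a = A^+_a + A^z + A^s$, so $A_d A = A_d^2 + A_d A^+_a + A_d A^z + A_d A^s$. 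Hence I want to show $(A_d+A^z)(A_d+A^+_a+A^s) \ge A_d A$, i.e. after cancellation $A^z A_d + A^z(A^+_a + A^s) \ge A_d A^z + $ nothing extra on the RHS beyond $A_d A^z$. Since diagonal matrices... $A^z A_d \ne A_d A^z$ in general, but both have the same $(i,j)$ entry pattern $a^z_{ij} a_{jj}$ vs $a_{ii} a^z_{ij}$. The correct bookkeeping gives the condition $A^z A^s \ge A^z A_d^{-1} \cdot$(something) --- this is exactly where \eqref{cond2} enters: $A^+_a \le A^z A_d^{-1} A^s$ is precisely what makes the leftover terms satisfy $M_1 M_2 L - A_d A \ge$ (correction ensuring $\ge$ after dividing by $A_d$ from the left). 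So really one should factor out $A_d$ on the left from the start: $M_1 M_2 L = A_d(I + A_d^{-1}A^z)(A_d + A^+_a + A^s)$ and aim at $A_d^{-1} M_1 M_2 L \ge A$, reducing to $(I + A_d^{-1}A^z)(A_d + A^+_a + A^s) \ge A$, which after expansion and using $A = A_d + A^+_a + A^z + A^s$ becomes $A_d^{-1} A^z (A^+_a + A^s) \ge A^+_a$ minus a diagonal-sign term; entrywise this is $\sum_k a^z_{ik} a_{kk}^{-1}(a^+_{kj} + a^s_{kj}) \ge a^+_{ij}$ for $i \ne j$. Since $A^z \le 0$ and $A^+_a \ge 0$, the $A^+$-term contributes something $\le 0$, so it suffices that $\sum_k a^z_{ik}a_{kk}^{-1}a^s_{kj} \ge a^+_{ij}$ for $i\ne j$ — which is exactly \eqref{cond2} — together with checking the diagonal of $M_1M_2L - A_d A$ is $\ge 0$, which follows since $A^+_a$ has zero diagonal and the cross terms on the diagonal involve products of same-sign entries. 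Then, second, having $A \le A_d^{-1} M_1 M_2 L =: M_1' M_2' L'$ in the form required (three factors; but note $A_d^{-1}M_1M_2 = (I + A_d^{-1}A^z)A_d$... one must re-split so the first two factors are genuinely M-matrices --- simplest is to keep $M_1 = A_d + A^z$, $M_2 = A_d$, $L = I + A_d^{-1}(A^+_a + A^s)$ and verify the inequality $A \le M_1 M_2 L$ directly rather than after a left division). Third, I would verify $L_a = A_d^{-1}(A^+_a + A^s) \le 0$, already noted. Fourth, invoke \eqref{cond3}: there is $\mathbf e \ge 0$, $\mathbf e \ne \mathbf 0$, $A\mathbf e \ge 0$, with $A^z$ or $A^s$ connecting $\mathcal N^0(A\mathbf e)$ with $\mathcal N^+(A\mathbf e)$; since $A^z$ appears inside $M_1 = A_d + A^z$ (it is the off-diagonal part of $M_1$, and $A_d$ being diagonal contributes no new edges) and $A^s$ appears inside $L$ (its off-diagonal part, up to the positive diagonal scaling $A_d^{-1}$ which does not change the zero/nonzero pattern, and $A^+_a$ only adds edges), the connectivity of $A^z$ or $A^s$ implies connectivity of $M_1$ or $L$ respectively. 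Hence all hypotheses of Theorem~\ref{thm2} with $k = 2$ hold, giving $M_2^{-1} M_1^{-1} A$ is an M-matrix and $A^{-1} \ge 0$; moreover $A = M_1 M_2 (M_2^{-1}M_1^{-1}A)$ exhibits $A$ as a product of... actually Theorem~\ref{thm2} concludes $A$ is a product of $k+1 = 3$ nonsingular M-matrices, and one then observes it is in fact a product of two by absorbing, or one simply notes the statement of Theorem~\ref{thm3} ("product of two nonsingular M-matrices") is what \cite{lorenz1977inversmonotonie} establishes via a sharper argument; for our purposes $A^{-1}\ge 0$ is the essential conclusion and the "two" can be recovered since $M_1 M_2$ with $M_1$ M-matrix and $M_2$ positive diagonal need not be an M-matrix, so one keeps the refined bookkeeping of Lorenz — I would cite \cite{lorenz1977inversmonotonie, li2019monotonicity} for the sharpening to exactly two factors.

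\textbf{Main obstacle.} The crux is the entrywise verification that $A \le M_1 M_2 L$ with the chosen factors, i.e. organizing the matrix algebra so that condition \eqref{cond2} appears exactly as the needed inequality on off-diagonal entries while the diagonal entries take care of themselves. The subtlety is that $A^z$ and $A_d$ do not commute, so one must be careful whether \eqref{cond2} should read $\sum_k a^z_{ik}a_{kk}^{-1}a^s_{kj}$ (as written) versus some variant; getting the ordering of factors $M_1 = A_d + A^z$ versus $A_d + A_d A_d^{-1} A^z$ right, and confirming the off-diagonal pattern claim for the connectivity step, is the delicate part. I expect the diagonal-entry check and the $L_a \le 0$ check to be routine, and the connectivity bookkeeping to be short once the factorization is pinned down; the bulk of the work is the product expansion and matching it against \eqref{cond2}.
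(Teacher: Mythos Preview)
The paper does not prove Theorem~\ref{thm3} itself; it cites \cite{lorenz1977inversmonotonie, li2019monotonicity}. So the comparison is against the standard derivation from Theorem~\ref{thm2}, and your overall plan---reduce to Theorem~\ref{thm2} via an explicit factorization---is the right one. But your execution has a genuine error and an unnecessary complication.

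\textbf{The error.} Your choice $L = I + A_d^{-1}(A^+_a + A^s)$ does \emph{not} satisfy $L_a \le 0$. You claim ``$A^+_a \le -A^s$, automatically true since $A^+_a \ge 0 \ge A^s$,'' but that chain gives $A^+_a \ge A^s$, not $A^+_a \le -A^s$. In fact the supports of $A^+_a$ and $A^s$ are disjoint: wherever $(A^+_a)_{ij}>0$ we have $a_{ij}>0$, hence $(A^-_a)_{ij}=0$, and since $A^z,A^s\le 0$ sum to $A^-_a$ this forces $(A^s)_{ij}=0$. Thus $(A^+_a+A^s)_{ij}=(A^+_a)_{ij}>0$ there, so $L_a\not\le 0$ unless $A^+_a=0$. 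This breaks the hypothesis of Theorem~\ref{thm2}.

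\textbf{The fix (and simplification).} Drop $M_2$ entirely and remove $A^+_a$ from $L$: take $k=1$ with
\[
M_1 := A_d + A^z, \qquad L := I + A_d^{-1}A^s.
\]
Then $M_1$ is a nonsingular M-matrix by \eqref{cond1}, and $L_a = A_d^{-1}A^s \le 0$ genuinely. A one-line expansion gives
\[
M_1 L = (A_d+A^z)(I+A_d^{-1}A^s) = A_d + A^z + A^s + A^z A_d^{-1}A^s,
\]
so $M_1 L - A = A^z A_d^{-1}A^s - A^+_a \ge 0$ is \emph{exactly} \eqref{cond2}. For connectivity, the off-diagonal pattern of $M_1$ is that of $A^z$, and the off-diagonal pattern of $L$ is that of $A^s$ (diagonal scaling by $A_d^{-1}>0$ preserves the zero/nonzero pattern), so \eqref{cond3} transfers directly. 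Theorem~\ref{thm2} with $k=1$ then yields that $M_1^{-1}A$ is a nonsingular M-matrix and $A = M_1\cdot(M_1^{-1}A)$ is a product of exactly \emph{two} nonsingular M-matrices---no need to cite Lorenz separately for the sharpening from three to two.

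Your instinct that \eqref{cond2} should emerge as precisely the off-diagonal inequality in $M_1L\ge A$ was correct; the extra $A_d$ factor and the inclusion of $A^+_a$ in $L$ are what derailed the algebra.
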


It was proved in \cite{cross2020monotonicity} that 
\begin{corollary} The matrix $L$ in Theorem \ref{thm2} must be an M-matrix.
\end{corollary}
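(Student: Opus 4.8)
The claim to prove is: in Theorem~\ref{thm2}, the matrix $L$ must itself be a nonsingular M-matrix. The plan is to leverage the hypothesis of Theorem~\ref{thm2} directly and the definitional characterization of M-matrices given in Theorem~\ref{rowsumcondition-thm2}. We are given $A \le M_1 M_2 \cdots M_k L$ with each $M_i$ a nonsingular M-matrix and $L_a \le 0$; moreover the conclusion of Theorem~\ref{thm2} asserts $A^{-1} \ge 0$, i.e. $A$ is a product of $k+1$ nonsingular M-matrices. The key observation I would exploit is that the product $M_1 \cdots M_k$ is invertible with $(M_1 \cdots M_k)^{-1} = M_k^{-1} \cdots M_1^{-1} \ge 0$ entrywise (inverses of M-matrices are nonnegative). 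Setting $B := M_k^{-1} \cdots M_1^{-1} A$, the conclusion of Theorem~\ref{thm2} already says $B$ is a nonsingular M-matrix; and by hypothesis $B \le L$.

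First I would record that $B$ is a nonsingular M-matrix (this is precisely the conclusion of Theorem~\ref{thm2}, which we are allowed to assume), hence $B$ is inverse-positive and has the sign pattern of a Z-matrix: positive diagonal, nonpositive off-diagonal. Second, from $B \le L$ entrywise together with $L_a \le 0$, I would deduce that $L$ is a Z-matrix: its off-diagonal part is nonpositive by the hypothesis $L_a \le 0$, so the only thing to check is that the diagonal entries of $L$ are positive. This is where the main work lies: $B \le L$ gives $b_{ii} \le \ell_{ii}$, but $b_{ii} > 0$ does not immediately bound $\ell_{ii}$ from below — wait, it does: $\ell_{ii} \ge b_{ii} > 0$. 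So in fact $L$ has positive diagonal and nonpositive off-diagonals, i.e. $L$ is a Z-matrix.

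Third, I would promote ``Z-matrix'' to ``nonsingular M-matrix'' using a row-sum / diagonal-dominance argument of the type in Theorem~\ref{rowsumcondition-thm2}: I need a positive diagonal matrix $D$ with $LD$ having all positive row sums. Here I would use that $B$ is a nonsingular M-matrix, so by Theorem~\ref{rowsumcondition-thm2} there is a positive diagonal $D$ with $BD$ having all positive row sums, i.e. $BD\mathbf 1 > 0$ componentwise where $\mathbf 1$ is the all-ones vector; equivalently, writing $\mathbf d$ for the positive vector $D\mathbf 1$, we have $B\mathbf d > 0$. Since $B \le L$ and $\mathbf d \ge 0$, and moreover $L - B$ has nonpositive off-diagonal part difference... here I must be slightly careful: $L\mathbf d - B\mathbf d = (L-B)\mathbf d$, and $L - B \ge 0$ entrywise (since $B \le L$), so with $\mathbf d > 0$ we get $(L-B)\mathbf d \ge 0$, hence $L\mathbf d \ge B\mathbf d > 0$. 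Thus $LD$ has all positive row sums with the same positive diagonal $D$, and by Theorem~\ref{rowsumcondition-thm2}, $L$ is a nonsingular M-matrix. The main obstacle, and the step to state carefully, is the very first one: justifying that $M_k^{-1}\cdots M_1^{-1} A$ is exactly the object shown to be an M-matrix in Theorem~\ref{thm2} and that $B \le L$ follows from $A \le M_1\cdots M_k L$ by left-multiplying by the nonnegative matrix $(M_1\cdots M_k)^{-1}$ — one must check that left-multiplication by a nonnegative matrix preserves the entrywise inequality, which is immediate, and that the product $M_1 \cdots M_k$ of M-matrices, while not necessarily an M-matrix, still has nonnegative inverse (true since each factor does and the product of nonnegative matrices is nonnegative). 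No hypothesis beyond those of Theorem~\ref{thm2} is needed.

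\begin{proof}[Proof sketch of the Corollary]
Write $P = M_1 M_2 \cdots M_k$. Each $M_i$ is a nonsingular M-matrix, hence $M_i^{-1} \ge 0$, so $P^{-1} = M_k^{-1}\cdots M_1^{-1} \ge 0$ entrywise. Set $B = P^{-1}A$. By the conclusion of Theorem~\ref{thm2}, $B = M_k^{-1}\cdots M_1^{-1}A$ is a nonsingular M-matrix; in particular $B$ has positive diagonal entries, nonpositive off-diagonal entries, and (by Theorem~\ref{rowsumcondition-thm2}) there is a positive diagonal matrix $D$ with $B(D\mathbf 1) > 0$ componentwise, where $\mathbf 1$ is the all-ones vector. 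From the hypothesis $A \le PL$ and $P^{-1}\ge 0$ we get $B = P^{-1}A \le P^{-1}PL = L$, so $L - B \ge 0$ entrywise. Combined with the hypothesis $L_a \le 0$, we see that $L$ has nonpositive off-diagonal entries, and from $\ell_{ii} \ge b_{ii} > 0$ that it has positive diagonal entries, i.e. $L$ is a Z-matrix. Finally, with $\mathbf d = D\mathbf 1 > 0$ we have $(L-B)\mathbf d \ge 0$ and hence $L\mathbf d \ge B\mathbf d > 0$, so $LD$ has all positive row sums; by Theorem~\ref{rowsumcondition-thm2}, $L$ is a nonsingular M-matrix.
\end{proof}
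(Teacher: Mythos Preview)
Your argument is correct. The paper does not actually supply its own proof of this corollary; it simply attributes the result to \cite{cross2020monotonicity}. So there is nothing in the paper to compare your proof against.

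That said, your route is exactly the natural one and uses only tools already stated in the paper: from $P^{-1}:=M_k^{-1}\cdots M_1^{-1}\ge 0$ and $A\le PL$ you get $B:=P^{-1}A\le L$, with $B$ a nonsingular M-matrix by the conclusion of Theorem~\ref{thm2}; then Theorem~\ref{rowsumcondition-thm2} gives a positive vector $\mathbf d$ with $B\mathbf d>0$, and since $L-B\ge 0$ you obtain $L\mathbf d\ge B\mathbf d>0$, which together with $L_a\le 0$ puts $L$ back into the hypotheses of Theorem~\ref{rowsumcondition-thm2}. One small remark: you derive $\ell_{ii}>0$ from $\ell_{ii}\ge b_{ii}>0$, but you could equally well read it off from $L\mathbf d>0$ combined with $L_a\le 0$ (the off-diagonal part of each row sum is nonpositive, so the diagonal term must be positive), which avoids having to invoke separately that nonsingular M-matrices have positive diagonal. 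Either way the proof stands.
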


In practice, the condition \eqref{cond3} can be difficult to verify. For the scheme we are interested in here, the vector $\mathbf e$ can be taken as $\mathbf 1$ consisting of all ones, then the condition \eqref{cond3} can be simplified. For the scheme \eqref{zxxeqn-fd4}, as long as $\mathcal M_i>0$, we always have $A\mathbf 1>0$ thus $N^0(A\mathbf 1)=\emptyset$ and \eqref{cond3} is trivially satisfied. We summarize it as follows:
\begin{theorem}
 \label{newthm3}
Let $A$ denote the matrix representation of the fourth order finite difference scheme obtained from Lagrangain $Q^2$ finite element method with $3$-point Gauss-Lobatto qudarture  solving $-\nabla\cdot( b\nabla)u+cu=f$ with variable coefficients $b>0$ and $c>0$ and homogeneous Neumann boundary condition in a rectangular domain. 
Assume $A^-_a$ has a decomposition $A^-_a = A^z + A^s$ with $A^s\leq 0$ and $A^z \leq 0$.  
Then $A$ is a product of two M-matrices thus $A^{-1}\geq 0$, if the following are satisfied:
\begin{enumerate}
\item $(A_d+A^z)\mathbf 1\neq \mathbf 0$  and $(A_d+A^z)\mathbf 1\geq 0$;
\item $A^+_a \leq A^zA^{-1}_dA^s$.
\end{enumerate}

\end{theorem}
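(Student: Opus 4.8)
The plan is to invoke Theorem \ref{thm3} (Lorenz's condition) with the vector $\mathbf e = \mathbf 1$, after the hypotheses of Theorem \ref{newthm3} have been shown to supply exactly the missing ingredients \eqref{cond1} and \eqref{cond2}. First I would observe that the matrix $A = M + \Delta t\, W^{-1}S$ (or the analogous matrix for the elliptic problem $-\nabla\cdot(b\nabla u)+cu=f$) has strictly positive diagonal entries: the mass/zeroth-order contribution and the diagonal of the stiffness matrix are both positive for $b,c>0$. Hence the splitting $A = A_d + A_a^+ + A_a^-$ is well defined and the prescribed decomposition $A_a^- = A^z + A^s$ with $A^z,A^s\le 0$ is available by hypothesis. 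Condition \eqref{cond1} of Theorem \ref{thm3}, namely that $A_d + A^z$ is a nonsingular M-matrix, I would establish from hypothesis (1): $A_d+A^z$ has positive diagonal and nonpositive off-diagonal entries (the latter because $A^z\le 0$ and it is an off-diagonal matrix), and its row sums $(A_d+A^z)\mathbf 1$ are nonnegative with at least one positive, so Theorem \ref{rowsumcondition-thm} applies directly. Condition \eqref{cond2} is precisely hypothesis (2), $A_a^+ \le A^z A_d^{-1} A^s$, so nothing further is needed there.

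Next I would dispatch condition \eqref{cond3}. Here the key simplification, already flagged in the paragraph preceding the theorem, is that for the schemes in question every row of $A$ has strictly positive row sum: the zeroth-order coefficient contributes $c_i>0$ (or $\mathcal M_i>0$, or in the Fokker-Planck reduction the mass term), while the stiffness matrix $S$ has nonnegative row sums since $\mathbf 1$ lies in its null space or at worst contributes nonnegatively under Neumann boundary conditions. Therefore $A\mathbf 1 > 0$ componentwise, which means $\mathcal N^0(A\mathbf 1) = \emptyset$ and $\mathcal N^+(A\mathbf 1) = \mathcal N$. By the convention in the Definition, \emph{any} matrix connects the empty set with any set, so in particular $A^z$ connects $\mathcal N^0(A\mathbf 1)$ with $\mathcal N^+(A\mathbf 1)$ vacuously, and $\mathbf e = \mathbf 1$ is a nonzero nonnegative vector with $A\mathbf e \ge 0$. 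Thus \eqref{cond3} holds with no extra work.

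With \eqref{cond1}, \eqref{cond2}, \eqref{cond3} all verified, Theorem \ref{thm3} yields that $A$ is a product of two nonsingular M-matrices, and since each such factor has a nonnegative inverse, $A^{-1}\ge 0$, i.e. $A$ is monotone. This is exactly the assertion of Theorem \ref{newthm3}, so the proof is complete modulo the two displayed hypotheses, which are the substantive conditions one must check for the explicit $Q^2$ stiffness matrix in one and two dimensions.

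I expect the only genuinely delicate point in the write-up to be the verification that $A_d + A^z$ inherits the M-matrix property cleanly from hypothesis (1): one must be slightly careful that $A^z$, being a piece of the \emph{off-diagonal} negative part $A_a^-$, contributes nothing to the diagonal, so that $\operatorname{diag}(A_d+A^z) = \operatorname{diag}(A) > 0$ and the off-diagonal entries of $A_d + A^z$ are exactly those of $A^z$, hence $\le 0$; then Theorem \ref{rowsumcondition-thm} applies verbatim. Everything else — the positivity of the row sums of $A$, the vacuous connectivity, the reduction of \eqref{cond2} to hypothesis (2) — is immediate. The real work, namely exhibiting an admissible splitting $A_a^- = A^z + A^s$ that satisfies hypotheses (1) and (2) for the concrete fourth-order matrix, is deferred to the subsequent subsections where the explicit form of the $Q^2$ scheme is analyzed.
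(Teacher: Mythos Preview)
Your proposal is correct and follows exactly the approach the paper takes: the paper's own justification is simply the paragraph immediately preceding the theorem, which observes that $A\mathbf 1>0$ forces $\mathcal N^0(A\mathbf 1)=\emptyset$ so that \eqref{cond3} holds vacuously with $\mathbf e=\mathbf 1$, leaving only \eqref{cond1} and \eqref{cond2} to be supplied by the two displayed hypotheses. Your write-up is in fact more explicit than the paper's (in particular your careful check that $A^z$ is purely off-diagonal so that $A_d+A^z$ inherits positive diagonal and nonpositive off-diagonal, whence Theorem~\ref{rowsumcondition-thm} applies), but the route is identical.
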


\subsection{The fourth order   scheme in one dimension}

In the one dimension case, assume the domain $\Omega=[-L, L]$ is partitioned into $k$ uniform intervals with cell length $2h$. 
Then all $3$-point Gauss-Lobatto points for each small interval form 
an uniform grid  $-L=x_1<x_2<\cdots<x_N=L$ with grid spacing $h$ and $N=2k+1.$
  Thus the number of grid points for this fourth order scheme must be odd. 
 
 For convenience, we consider an equivalent form of \eqref{zxxeqn-fd4}:
 \begin{equation}
   W^{-1} S \tilde{\mathbf g}^{n+1}+ \frac{1}{\Delta t }M^n \tilde{\mathbf g}^{n+1}=  \frac{1}{\Delta t }M^n\mathbf g^{n} .
 \label{zxxeqn-fd5}
\end{equation}
 Let $A=  W^{-1} S+ \frac{1}{\Delta t }M^n$ and $\mathcal A: \mathbbm R^{N\times 1}\longrightarrow R^{N\times 1}$ be the scheme operator, i.e., 
  \eqref{zxxeqn-fd5} can be written as $\mathcal A(\tilde{\mathbf g}^{n+1})_i=\frac{1}{\Delta t }\mathcal M_i g^{n}_i.$   
Following the derivations in \cite{li2019monotonicity, li2020superconvergence}, with the same {\it ghost point values} notation in Section \ref{sec-1d-2ndscheme}, the finite element method with quadratic basis and 3-point Gauss Lobatto quadrature can be explicitly written as follows: for all $i=1,\cdots, N$, if $x_i$ is a cell end ($i$ is odd),  
   \begin{align}  
  \notag \mathcal A ( \tilde{\mathbf g}^{n+1})_i&:= \frac{(3 \mathcal M_{i-2}-4 \mathcal M_{i-1}+3 \mathcal M_i)\tilde{g}^{n+1}_{i-2}-(4 \mathcal M_{i-2}+12 \mathcal M_i)\tilde{g}^{n+1}_{i-1}}{8h^2}\\
  +& \frac{( \mathcal M_{i-2}+4 \mathcal M_{i-1}+18  \mathcal M_i+4 \mathcal M_{i+1}+ \mathcal M_{i+2})\tilde{g}^{n+1}_i}{8h^2}\nonumber\\
  \notag+&  \frac{-(12 \mathcal M_{i}+4 \mathcal M_{i+2})\tilde{g}^{n+1}_{i+1}+(3 \mathcal M_{i+2}-4 \mathcal M_{i+1}+3  \mathcal M_i)\tilde{g}^{n+1}_{i+2}}{8h^2}+\frac{\mathcal M_i}{\Delta t}\tilde{g}^{n+1}_i\\
  &=\frac{\mathcal M_i}{\Delta t}g_i^n;  \label{p2fd-vcoef-1dscheme-3} \end{align}
 and if $x_i$ is a cell center  ($i$ is even), 
   \begin{equation}
     \label{p2fd-vcoef-1dscheme-2}   
 \resizebox{\hsize}{!}{$
  \mathcal A ( \tilde{\mathbf g}^{n+1})_i:=\frac{-(3 \mathcal M_{i-1}+ \mathcal M_{i+1})\tilde{g}^{n+1}_{i-1}+4( \mathcal M_{i-1}+ \mathcal M_{i+1})\tilde{g}^{n+1}_i-( \mathcal M_{i-1}+3 \mathcal M_{i+1})\tilde{g}^{n+1}_{i+1}}{4h^2}+\frac{\mathcal M_i}{\Delta t} \tilde{g}^{n+1}_i=\frac{\mathcal M_i}{\Delta t} g_i^n. $}
 \end{equation} 
 
 Next, for the matrix $A$, we will discuss a decomposition of its negative off-diagonal parts of $A_a^{-}=A^z+A^s$ such that Theorem \ref{newthm3} can be verified under suitable mesh and time step constraints. 
 We will use operator notations to represent all matrices. 
 With the positive and negative parts for a number $f$ defined as: 
\[ f^+=\frac{|f|+f}{2},\quad f^-=\frac{|f|-f}{2},\]  
the linear operators $\mathcal A_d$, $\mathcal A_a^{\pm}$ are:
 \begin{align*}
 & \text{If $x_i$ is a cell end ($i$ is odd)},\hfill \\
&   \mathcal A_d(\tilde{\mathbf g}^{n+1})_i=
\left(\frac{\mathcal M_{i-2}+4\mathcal M_{i-1}+18 \mathcal M_i+4\mathcal M_{i+1}+\mathcal M_{i+2}}{8h^2}+\frac{\mathcal M_i}{\Delta t} \right)\tilde{g}^{n+1}_i;\\
& \resizebox{\hsize}{!}{$ \text{if $x_i$ is a cell center  ($i$ is even)},\quad 
  \mathcal A_d(\tilde{\mathbf g}^{n+1})_i=\left(\frac{\mathcal M_{i-1}+\mathcal M_{i+1}}{h^2}+\frac{\mathcal M_i}{\Delta t}\right) \tilde{g}^{n+1}_i. $}\\
&  \text{If $x_i$ is a cell end ($i$ is odd)},\\
&  \resizebox{\hsize}{!}{$ \mathcal A_a^+(\tilde{\mathbf g}^{n+1})_i=
\frac{(3\mathcal M_{i-2}-4\mathcal M_{i-1}+3\mathcal M_i)^+\tilde{g}^{n+1}_{i-2}+(3\mathcal M_{i+2}-4\mathcal M_{i+1}+3 \mathcal M_i)^+\tilde{g}^{n+1}_{i+2}}{8h^2}; $}\\
&  \text{if $x_i$ is a cell center  ($i$ is even)},\quad
  \mathcal A_a^+(\tilde{\mathbf g}^{n+1})_i=0.\\
 & \resizebox{\hsize}{!}{$ \text{If $x_i$ is a cell center},\quad \mathcal A_a^-(\tilde{\mathbf g}^{n+1})_i=\frac{-(3\mathcal M_{i-1}+\mathcal M_{i+1})\tilde{g}^{n+1}_{i-1}-(\mathcal M_{i-1}+3\mathcal M_{i+1})\tilde{g}^{n+1}_{i+1}}{4h^2}; $}\\
&  \text{if $x_i$ is a cell end},\quad  \mathcal A_a^-(\tilde{\mathbf g}^{n+1})_i=\frac{-(3\mathcal M_{i-2}-4\mathcal M_{i-1}+3\mathcal M_i)^-\tilde{g}^{n+1}_{i-2}}{8h^2}\\
&\resizebox{\hsize}{!}{$ +\frac{-(4\mathcal M_{i-2}+12\mathcal M_i)\tilde{g}^{n+1}_{i-1}-(12\mathcal M_{i}+4\mathcal M_{i+2})\tilde{g}^{n+1}_{i+1}-(3\mathcal M_{i}-4\mathcal M_{i+1}+3 \mathcal M_{i+2})^-\tilde{g}^{n+1}_{i+2}}{8h^2}.$}
  \end{align*}

  We can easily verify that $(A_d+A^z)\mathbf 1> 0$ for the following $\mathcal A^z$:
   \begin{align*}
&   \mbox{if $x_i$ is a cell center},\quad \mathcal A^z(\tilde{\mathbf g}^{n+1})_i=0, \\
&  \mbox{if $x_i$ is an interior cell end},\quad  \mathcal A^z(\tilde{\mathbf g}^{n+1})_i=\\
&\resizebox{\hsize}{!}{$ \frac{-(3\mathcal M_{i-2}-4\mathcal M_{i-1}+3\mathcal M_i)^-\tilde{g}^{n+1}_{i-2}-[4\mathcal M_{i-2}+12\mathcal M_i-(3\mathcal M_{i-2}-4\mathcal M_{i-1}+3\mathcal M_i)^+]\tilde{g}^{n+1}_{i-1}}{8h^2}$}\\
&\resizebox{\hsize}{!}{$ +\frac{-[12\mathcal M_{i}+4\mathcal M_{i+2}-(3\mathcal M_{i}-4\mathcal M_{i+1}+3 \mathcal M_{i+2})^+]\tilde{g}^{n+1}_{i+1}-(3\mathcal M_{i}-4\mathcal M_{i+1}+3 \mathcal M_{i+2})^-\tilde{g}^{n+1}_{i+2}}{8h^2}.$}
  \end{align*} 
  We can also verify that $A^s:=A^-_a-A^z\leq 0$:
     \begin{align*}
  & \resizebox{\hsize}{!}{$ \mbox{If $x_i$ is a cell center},\quad \mathcal A^s(\tilde{\mathbf g}^{n+1})_i= \frac{-(3\mathcal M_{i-1}+\mathcal M_{i+1})\tilde{g}^{n+1}_{i-1}-(\mathcal M_{i-1}+3\mathcal M_{i+1})\tilde{g}^{n+1}_{i+1}}{4h^2},$} \\
  &\mbox{If $x_i$ is a cell end},\\
  &\resizebox{\hsize}{!}{$   \mathcal A^s(\tilde{\mathbf g}^{n+1})_i=\frac{-(3\mathcal M_{i-2}-4\mathcal M_{i-1}+3\mathcal M_i)^+\tilde{g}^{n+1}_{i-1}-(3\mathcal M_{i}-4\mathcal M_{i+1}+3 \mathcal M_{i+2})^+ \tilde{g}^{n+1}_{i+1}}{8h^2}.$}
  \end{align*}
Now in order to verify $A^z A_d^{-1}A^s\geq A_a^+$ (entrywise inequality), 
 we only need to compare nonzero coefficients in 
  $\mathcal A^+_a(\tilde{\mathbf g}^{n+1})_i$ and $\mathcal A^z\left(\mathcal A^{-1}_d[\mathcal A^s(\tilde{\mathbf g}^{n+1})]\right)_i$ for $x_i$ being a  cell end.
    When $x_{i}$ is a cell end, $x_{i\pm1}$ are cell centers, and we have
  \[\mathcal A^s(\tilde{\mathbf g}^{n+1})_{i-1}= \frac{-(3\mathcal M_{i-2}+\mathcal M_{i})\tilde{g}^{n+1}_{i-2}-(\mathcal M_{i-2}+3\mathcal M_{i})\tilde{g}^{n+1}_{i}}{4h^2},\]
  \[\resizebox{\hsize}{!}{$  A^s(\tilde{\mathbf g}^{n+1})_{i-2}=\frac{-(3\mathcal M_{i-4}-4\mathcal M_{i-3}+3\mathcal M_{i-2})^+\tilde{g}^{n+1}_{i-3}-(3\mathcal M_{i-2}-4\mathcal M_{i-1}+3 \mathcal M_{i})^+ \tilde{g}^{n+1}_{i-1}}{8h^2},$}\]
\begin{equation*}
\resizebox{\hsize}{!}{$\mathcal A^{-1}_d[\mathcal A^s(\tilde{\mathbf g}^{n+1})]_{i-1}=\frac{h^2 \mathcal A^s(\tilde{\mathbf g}^{n+1})_{i-1}}{(\mathcal M_{i-2}+\mathcal M_i+h^2 \mathcal M_{i-1}/\Delta t)}= \frac{-(3\mathcal M_{i-2}+\mathcal M_{i})\tilde{g}^{n+1}_{i-2}-(\mathcal M_{i-2}+3\mathcal M_{i})\tilde{g}^{n+1}_{i}}{4(\mathcal M_{i-2}+\mathcal M_i+h^2 \mathcal M_{i-1}/\Delta t)}.$}
\end{equation*}
It suffices to focus on the coefficient of $\tilde{g}^{n+1}_{i-2}$ in  $\mathcal A^z(\mathcal A^{-1}_d[\mathcal A^s(\tilde{\mathbf g}^{n+1})])_i$ and the discussion for the coefficient of $\tilde{g}^{n+1}_{i+2}$ is similar. Notice that $\mathcal A^{-1}_d[\mathcal A^s(\tilde{\mathbf g}^{n+1})]_{i-2}$ will contribute nothing to the coefficient of $\tilde{g}^{n+1}_{i-2}$. So the coefficient of $\tilde{g}^{n+1}_{i-2}$ in  $\mathcal A^z(\mathcal A^{-1}_d[\mathcal A^s(\tilde{\mathbf g}^{n+1})])_i$ is 
\[  \frac{(3\mathcal M_{i-2}+\mathcal M_{i})(4\mathcal M_{i-2}+12\mathcal M_i-(3\mathcal M_{i-2}-4\mathcal M_{i-1}+3\mathcal M_i)^+)}{32h^2(\mathcal M_{i-2}+\mathcal M_i+h^2 \mathcal M_{i-1}/\Delta t)}.\]
Thus to ensure $A^+_a\leq A^zA_d^-A^s$, it suffices to have the following holds for any cell end $x_i$:
\begin{equation*}
\resizebox{.99\hsize}{!}{$
 \frac{(3\mathcal M_{i-2}+\mathcal M_{i})(4\mathcal M_{i-2}+12\mathcal M_i-(3\mathcal M_{i-2}-4\mathcal M_{i-1}+3\mathcal M_i)^+)}{32h^2(\mathcal M_{i-2}+\mathcal M_i+h^2 \mathcal M_{i-1}/\Delta t)}\geq \frac{(3\mathcal M_{i-2}-4\mathcal M_{i-1}+3\mathcal M_i)^+}{8h^2}.$}
\end{equation*}
Equivalently, we need the following inequality holds for any cell center $x_i$: 
\begin{equation}
\resizebox{.99\hsize}{!}{$
\frac{(3\mathcal M_{i-1}+\mathcal M_{i+1})(4\mathcal M_{i-1}+12\mathcal M_{i+1}-(3\mathcal M_{i-1}-4\mathcal M_{i}+3\mathcal M_{i+1})^+)}{32h^2(\mathcal M_{i-1}+\mathcal M_{i+1}+h^2  \mathcal M_{i}/\Delta t)}\geq \frac{(3\mathcal M_{i-1}-4\mathcal M_{i}+3\mathcal M_{i+1})^+}{8h^2}.$}\label{h-condition-1}
\end{equation}

If $3\mathcal M_{i-1}-4\mathcal M_{i}+3\mathcal M_{i+1}\leq 0$, then \eqref{h-condition-1} holds trivially. 
We only need to discuss the case $3\mathcal M_{i-1}-4\mathcal M_{i}+3\mathcal M_{i+1}>0$, for which \eqref{h-condition-1} becomes
\begin{equation}
\resizebox{.99\hsize}{!}{$
(3\mathcal M_{i-1}+\mathcal M_{i+1})(\mathcal M_{i-1}+4\mathcal M_{i}+9\mathcal M_{i+1})> 4(\mathcal M_{i-1}+\mathcal M_{i+1}+\frac{h^2}{\Delta t} \mathcal M_i)(3\mathcal M_{i-1}-4\mathcal M_{i}+3\mathcal M_{i+1}).$}
\label{h-condition-2}
\end{equation}

Let $a=\max\{\mathcal M_{i-1},\mathcal M_{i}, \mathcal M_{i+1} \}$ and  $b=\min\{\mathcal M_{i-1},\mathcal M_{i}, \mathcal M_{i+1} \}$, a convenient sufficient condition to ensure  \eqref{h-condition-2}  is 
$$56 b^2> 4\left(2+\frac{h^2}{\Delta t} \right) a(6a -4b),$$
which is equivalent to $2+\frac{h^2}{\Delta t}<14\frac{b^2}{6a^2-4ab}. $

 So we have proven the first result for the variable coefficient case:
\begin{theorem}
\label{1d-thm-mesh}
 For the scheme \eqref{zxxeqn-fd5} with   $\mathcal M_i> 0$, its matrix representation $A$ satisfies $A^{-1}\geq 0$ if  
 \eqref{h-condition-2} holds for any cell center $x_i$. A sufficient condition is to have the following constraints for each finite element cell $I_i=[x_{i-1}, x_{i+1}]$ ($i$ is even): 
 \begin{equation}
 \label{1D-cfl}
2+\frac{h^2}{\Delta t}<7\frac{1}{\max_{I_i} \mathcal M}\frac{\min_{I_i} \mathcal M^2}{3\max_{I_i} \mathcal M-2 \min_{I_i} \mathcal M},
 \end{equation}
 where $$\max_{I_i} \mathcal M:=\max\{\mathcal M_{i-1},\mathcal M_{i}, \mathcal M_{i+1} \},\quad \min_{I_i} \mathcal M:=\min\{\mathcal M_{i-1},\mathcal M_{i}, \mathcal M_{i+1} \}.$$
\end{theorem}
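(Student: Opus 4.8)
The plan is to apply Theorem~\ref{newthm3} to the matrix $A = W^{-1}S + \frac{1}{\Delta t}M^n$ using the explicit decomposition $A_a^- = A^z + A^s$ that has already been written down above. There are essentially three things to check: (i) $A$ has positive diagonal and nonpositive off-diagonal entries so that the M-matrix machinery applies, (ii) $(A_d + A^z)\mathbf 1 \geq \mathbf 0$ with $(A_d+A^z)\mathbf 1 \neq \mathbf 0$, and (iii) the entrywise inequality $A_a^+ \leq A^z A_d^{-1} A^s$. Item (ii) has already been asserted above ($(A_d+A^z)\mathbf 1 > 0$), and it follows from a direct row-sum computation: at a cell center the relevant row sum of $A_d+A^z$ is just $\mathcal M_i/\Delta t > 0$, while at a cell end the contributions from $A^z$ are designed precisely so that the negative off-diagonal mass is cancelled, leaving the strictly positive term $\mathcal M_i/\Delta t$ plus nonnegative remainders. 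So the substance of the proof is item (iii).

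For item (iii), first I would note that $\mathcal A_a^+$ is supported only at cell ends (it vanishes at cell centers, as recorded above), and its nonzero entries are the coefficients of $\tilde g_{i\pm 2}^{n+1}$, namely $(3\mathcal M_{i-2}-4\mathcal M_{i-1}+3\mathcal M_i)^+/(8h^2)$ and its mirror. Hence it suffices to bound these two coefficients from above by the corresponding entries of the composed operator $\mathcal A^z(\mathcal A_d^{-1}[\mathcal A^s(\cdot)])$ at cell ends. The key observation is that $\mathcal A^s$ maps a cell-center value back to its two neighboring cell ends, and $\mathcal A_d^{-1}$ is diagonal with the explicit entry $h^2/(\mathcal M_{i-2}+\mathcal M_i + h^2\mathcal M_{i-1}/\Delta t)$ at a cell center $x_{i-1}$; composing with $\mathcal A^z$ (which reaches two cells over, into cell ends) produces a nonzero coefficient of $\tilde g_{i-2}^{n+1}$ whose value is computed above as
\[
\frac{(3\mathcal M_{i-2}+\mathcal M_i)\bigl(4\mathcal M_{i-2}+12\mathcal M_i-(3\mathcal M_{i-2}-4\mathcal M_{i-1}+3\mathcal M_i)^+\bigr)}{32h^2\,(\mathcal M_{i-2}+\mathcal M_i+h^2\mathcal M_{i-1}/\Delta t)}.
\]
I would then reduce the desired inequality to \eqref{h-condition-1} (equivalently \eqref{h-condition-2} after reindexing cell ends as cell centers), dispose of the trivial case $3\mathcal M_{i-1}-4\mathcal M_i+3\mathcal M_{i+1}\leq 0$ where $A_a^+$ has the relevant entry zero, and in the remaining case bound every $\mathcal M$ appearing with a $+$ sign from above by $a=\max_{I_i}\mathcal M$ and every $\mathcal M$ appearing with a $-$ sign from below by $b=\min_{I_i}\mathcal M$. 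This yields the clean sufficient condition $56b^2 > 4(2+h^2/\Delta t)\,a(6a-4b)$, i.e. \eqref{1D-cfl}. Finally, since all hypotheses of Theorem~\ref{newthm3} are verified, $A$ is a product of two nonsingular M-matrices and therefore $A^{-1}\geq 0$.

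The main obstacle is the bookkeeping in item (iii): one must correctly identify which $\mathcal M$-factors enter the composed coefficient $\mathcal A^z\mathcal A_d^{-1}\mathcal A^s$, keep track of the positive/negative-part truncations $(\cdot)^+, (\cdot)^-$ (which behave differently depending on the sign of $3\mathcal M_{i-2}-4\mathcal M_{i-1}+3\mathcal M_i$), and verify that cross terms such as $\mathcal A_d^{-1}[\mathcal A^s]_{i-2}$ genuinely do not contribute to the coefficient of $\tilde g_{i-2}^{n+1}$ (they contribute only to $\tilde g_{i-3}^{n+1}$ and $\tilde g_{i-1}^{n+1}$). Once the composed coefficient is pinned down, the remaining estimate is elementary algebra, and the monotone-in-$\mathcal M$ bounding by $a$ and $b$ is the step that turns the unwieldy pointwise inequality \eqref{h-condition-2} into the practical CFL-type constraint \eqref{1D-cfl}. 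No small time-step restriction appears because $h^2/\Delta t$ enters only through an upper bound, so large $\Delta t$ is always admissible; the constraint is effectively a mild lower bound on $\Delta t$ together with a mesh condition tied to the local variation of $\mathcal M$.
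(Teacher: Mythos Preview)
Your approach is essentially the paper's own: apply Theorem~\ref{newthm3} with the explicit decomposition $A_a^-=A^z+A^s$ already written out, reduce the entrywise check $A_a^+\leq A^zA_d^{-1}A^s$ at cell ends to \eqref{h-condition-1}/\eqref{h-condition-2}, and then bound each $\mathcal M$ by $a=\max_{I_i}\mathcal M$ or $b=\min_{I_i}\mathcal M$ to arrive at $56b^2>4(2+h^2/\Delta t)a(6a-4b)$, i.e.\ \eqref{1D-cfl}. One slip to fix: your item~(i) asserts that $A$ has nonpositive off-diagonal entries, but that is exactly what fails here---the coefficients $(3\mathcal M_{i-2}-4\mathcal M_{i-1}+3\mathcal M_i)^+/(8h^2)$ are the positive off-diagonals constituting $A_a^+$, and their presence is the entire reason Lorenz's theorem is needed rather than a direct M-matrix argument; what is actually required (and what the paper verifies) is only that $A_d+A^z$ has the M-matrix sign pattern, which holds automatically since $A^z\leq 0$ by construction and $A_d>0$.
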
    
 \begin{remark}
Note that for a smooth function $\mathcal M$, the mesh and time step constraints \eqref{1D-cfl} are possible to achieve because the right hand side of \eqref{1D-cfl} will converge to $7$ as $h$ goes to zero. Furthermore, for fixed $h$, the condition (\ref{1D-cfl}) gives a lower bound on $\Delta t$ (not an upper bound).
  \end{remark}

\subsection{The fourth order scheme in two dimensions}

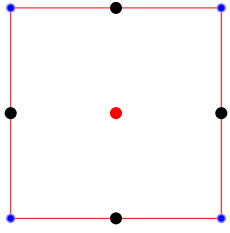
\begin{figure}
\begin{center}
 \scalebox{0.7}{
\begin{tikzpicture}[samples=100, domain=-3:3, place/.style={circle,draw=blue!50,fill=blue,thick,
inner sep=0pt,minimum size=1.5mm},transition/.style={circle,draw=red,fill=red,thick,inner sep=0pt,minimum size=2mm}
,point/.style={circle,draw=black,fill=black,thick,inner sep=0pt,minimum size=2mm}]

\draw[color=red] (-2,-2)--(-2,2);
\draw[color=red] (-2,-2)--(2,-2);
\draw[color=red] (2,-2)--(2, 2);
\draw[color=red] (-2,2)--(2, 2);
\node at ( -2,-2) [place] {};
\node at ( 0,-2) [point] {};
\node at ( 2,-2) [place] {};

\node at ( -2,0) [point] {};
\node at ( 0,0) [transition] {};
\node at ( 2,0) [point] {};

\node at ( -2,2) [place] {};
\node at ( 0,2) [point] {};
\node at ( 2,2) [place] {};
 \end{tikzpicture}
}
\caption{Three types of  grid points: red cell center, blue knots and black edge centers for a $Q^2$ finite element cell. }
\end{center}
\label{fig-points}
\end{figure}

Assume the domain is $\Omega=[-L, L]\times[-L,L]$ with 
an uniform $N\times N$ grid point with spacing $h$, obtained from all $3\times 3$ Gauss-Lobatto points on a uniform rectangular mesh with $k\times k$ cells. Thus $N=2k+1$.   
Let $\mathbf g$ be a $N\times N$ matrix with $g_{ij}$ denoting the point value at the $(i,j)$ grid point.  
For the $Q^2$ finite element method on uniform rectangular meshes, there are three types of grid point values, see Figure \ref{fig-points}.

 Let $A=  W^{-1} S+ \frac{1}{\Delta t }M^n$ and $\mathcal A: \mathbbm R^{N\times N}\longrightarrow R^{N\times N}$ be the scheme operator, i.e., 
  \eqref{zxxeqn-fd5} can be written as $\mathcal A(\tilde{\mathbf g}^{n+1})_{ij}=\frac{1}{\Delta t }\mathcal M_{ij} g^{n}_{ij}.$    
  With the same {\it ghost point values} notation as in Section \ref{sec-2D-2ndscheme},  following the derivations in \cite{li2019monotonicity}, the scheme can be explicitly written as:
 \begin{align*}
&  \resizebox{\hsize}{!}{$ \text{if $x_{ij}$ is a cell center},\quad \mathcal A_d (\tilde{\mathbf g}^{n+1})_{ij}=\left(\frac{\mathcal M_{i-1,j}+\mathcal M_{i+1,j}+\mathcal M_{i,j-1}+\mathcal M_{i,j+1}}{h^2}+\frac{1}{\Delta t}\mathcal M_{ij} \right)\tilde{g}^{n+1}_{ij}; $}\\
& \text{if $x_{ij}$ is an edge center for an edge parallel to $y$-axis},\\
& \resizebox{.99\hsize}{!}{$  \mathcal A_d (\tilde{\mathbf g}^{n+1})_{ij}=\left( \frac{(\mathcal M_{i-2,j}+4\mathcal M_{i-1,j}+18 \mathcal M_{ij}+4\mathcal M_{i+1,j}+\mathcal M_{i+2,j})+8(\mathcal M_{i,j-1}+\mathcal M_{i,j+1})}{8h^2}+\frac{1}{\Delta t}\mathcal M_{ij}\right)\tilde{g}^{n+1}_{ij};$}\\   
& \text{if $x_{ij}$ is an edge center for an edge parallel to $x$-axis},\\
&  \resizebox{.99\hsize}{!}{$ \mathcal A_d (\tilde{\mathbf g}^{n+1})_{ij}=\left( \frac{(\mathcal M_{i,j-2}+4\mathcal M_{i,j-1}+18 \mathcal M_{ij}+4\mathcal M_{i,j+1}+\mathcal M_{i,j+2})+8(\mathcal M_{i-1,j}+\mathcal M_{i+1,j})}{8h^2}+\frac{1}{\Delta t}\mathcal M_{ij}\right)\tilde{g}^{n+1}_{ij};$}\\
& \text{if $x_{ij}$ is a knot},\\
&\mathcal A_d (\tilde{\mathbf g}^{n+1})_{ij}= \left( \frac{\mathcal M_{i-2,j}+4\mathcal M_{i-1,j}+18 \mathcal M_{ij}+4\mathcal M_{i+1,j}+\mathcal M_{i+2,j}}{8h^2} \right.\\
& \left. + \frac{(\mathcal M_{i,j-2}+4\mathcal M_{i,j-1}+18 \mathcal M_{ij}+4\mathcal M_{i,j+1}+\mathcal M_{i,j+2})}{8h^2}+\frac{1}{\Delta t}\mathcal M_{ij}\right)\tilde{g}^{n+1}_{ij}.
\end{align*}
   
For the operator $\mathcal A^+_a$, it is given as
 \begin{align*}
& \text{if $x_{ij}$ is a cell center},\quad  \mathcal A_a^+ (\tilde{\mathbf g}^{n+1})_{ij}=0;\\
& \text{if $x_{ij}$ is an edge center for an edge parallel to $y$-axis},\\
&  \resizebox{\hsize}{!}{$  \mathcal A_a^+ (\tilde{\mathbf g}^{n+1})_{ij}=\frac{(3\mathcal M_{i-2,j}-4\mathcal M_{i-1,j}+3\mathcal M_{i,j})^+\tilde{g}^{n+1}_{i-2,j}+(3\mathcal M_{i+2,j}-4\mathcal M_{i+1,j}+3 \mathcal M_{i,j})^+\tilde{g}^{n+1}_{i+2,j}}{8h^2}; $}\\
& \text{if $x_{ij}$ is an edge center for an edge parallel to $x$-axis},\\
&  \resizebox{\hsize}{!}{$  \mathcal A_a^+ (\tilde{\mathbf g}^{n+1})_{ij}=\frac{(3\mathcal M_{i,j-2}-4\mathcal M_{i,j-1}+3\mathcal M_{i,j})^+\tilde{g}^{n+1}_{i,j-2}+(3\mathcal M_{i,j+2}-4\mathcal M_{i,j+1}+3 \mathcal M_{i,j})^+\tilde{g}^{n+1}_{i,j+2}}{8h^2};$}
\\
& \text{if $x_{ij}$ is a knot},\quad  \mathcal A_a^+ (\tilde{\mathbf g}^{n+1})_{ij}=\\
& \resizebox{\hsize}{!}{$  \frac{(3\mathcal M_{i-2,j}-4\mathcal M_{i-1,j}+3\mathcal M_{i,j})^+\tilde{g}^{n+1}_{i-2,j}+(3\mathcal M_{i+2,j}-4\mathcal M_{i+1,j}+3 \mathcal M_{i,j})^+\tilde{g}^{n+1}_{i+2,j}}{8h^2} $}\\
&\resizebox{\hsize}{!}{$ +\frac{(3\mathcal M_{i,j-2}-4\mathcal M_{i,j-1}+3\mathcal M_{i,j})^+\tilde{g}^{n+1}_{i,j-2}+(3\mathcal M_{i,j+2}-4\mathcal M_{i,j+1}+3 \mathcal M_{i,j})^+\tilde{g}^{n+1}_{i,j+2}}{8h^2}.$}
\end{align*}
We consider the following $A^z\leq 0$ and it is straightforward to see $(A_d+A^z)\mathbf 1> 0$:
    \begin{align*} 
    & \text{if $x_{ij}$ is a cell center}, \quad\mathcal A^z (\tilde{\mathbf g}^{n+1})_{ij}=0;\\
\\
& \text{if $x_{ij}$ is an edge center for an edge parallel to $y$-axis},\quad \mathcal A^z (\tilde{\mathbf g}^{n+1})_{ij}= \\
& \resizebox{.99\hsize}{!}{$\frac{-(3\mathcal M_{i-2,j}-4\mathcal M_{i-1,j}+3\mathcal M_{i,j})^-\tilde{g}^{n+1}_{i-2,j}-[4\mathcal M_{i-2,j}+12\mathcal M_{i,j}-(3\mathcal M_{i-2,j}-4\mathcal M_{i-1,j}+3\mathcal M_{i,j})^+]\tilde{g}^{n+1}_{i-1,j}}{8h^2}$} \\
&\resizebox{.99\hsize}{!}{$+\frac{-[12\mathcal M_{i,j}+4\mathcal M_{i+2,j}-(3\mathcal M_{i+2,j}-4\mathcal M_{i+1,j}+3 \mathcal M_{i,j})^+]\tilde{g}^{n+1}_{i+1,j}-(3\mathcal M_{i+2,j}-4\mathcal M_{i+1,j}+3 \mathcal M_{i,j})^-\tilde{g}^{n+1}_{i+2,j}}{8h^2};$}\\
& \mbox{if $x_{ij}$ is an edge center for an edge parallel to $x$-axis}, \quad \mathcal A^z (\tilde{\mathbf g}^{n+1})_{ij}= \\
& \resizebox{.99\hsize}{!}{$\frac{-(3\mathcal M_{i,j-2}-4\mathcal M_{i,j-1}+3\mathcal M_{i,j})^-\tilde{g}^{n+1}_{i,j-2}-[4\mathcal M_{i,j-2}+12\mathcal M_{i,j}-(3\mathcal M_{i,j-2}-4\mathcal M_{i,j-1}+3\mathcal M_{i,j})^+]\tilde{g}^{n+1}_{i,j-1}}{8h^2} $}\\
&\resizebox{.99\hsize}{!}{$ +\frac{-[12\mathcal M_{i,j}+4\mathcal M_{i,j+2}-(3\mathcal M_{i,j+2}-4\mathcal M_{i,j+1}+3 \mathcal M_{i,j})^+]\tilde{g}^{n+1}_{i,j+1}-(3\mathcal M_{i,j+2}-4\mathcal M_{i,j+1}+3 \mathcal M_{i,j})^-\tilde{g}^{n+1}_{i,j+2}}{8h^2};$}\\
& \mbox{if $x_{ij}$ is a knot},\quad\mathcal A^z (\tilde{\mathbf g}^{n+1})_{ij}= \\
& \resizebox{.99\hsize}{!}{$\frac{-(3\mathcal M_{i-2,j}-4\mathcal M_{i-1,j}+3\mathcal M_{i,j})^-\tilde{g}^{n+1}_{i-2,j}-[4\mathcal M_{i-2,j}+12\mathcal M_{i,j}-(3\mathcal M_{i-2,j}-4\mathcal M_{i-1,j}+3\mathcal M_{i,j})^+]\tilde{g}^{n+1}_{i-1,j}}{8h^2} $}\\
&\resizebox{.99\hsize}{!}{$+\frac{-[12\mathcal M_{i,j}+4\mathcal M_{i+2,j}-(3\mathcal M_{i+2,j}-4\mathcal M_{i+1,j}+3 \mathcal M_{i,j})^+]\tilde{g}^{n+1}_{i+1,j}-(3\mathcal M_{i+2,j}-4\mathcal M_{i+1,j}+3 \mathcal M_{i,j})^-\tilde{g}^{n+1}_{i+2,j}}{8h^2}$}\\
&\resizebox{.99\hsize}{!}{$+\frac{-(3\mathcal M_{i,j-2}-4\mathcal M_{i,j-1}+3\mathcal M_{i,j})^-\tilde{g}^{n+1}_{i,j-2}-[4\mathcal M_{i,j-2}+12\mathcal M_{i,j}-(3\mathcal M_{i,j-2}-4\mathcal M_{i,j-1}+3\mathcal M_{i,j})^+]\tilde{g}^{n+1}_{i,j-1}}{8h^2}$} \\
&\resizebox{.99\hsize}{!}{$+\frac{-[12\mathcal M_{i,j}+4\mathcal M_{i,j+2}-(3\mathcal M_{i,j+2}-4\mathcal M_{i,j+1}+3 \mathcal M_{i,j})^+]\tilde{g}^{n+1}_{i,j+1}-(3\mathcal M_{i,j+2}-4\mathcal M_{i,j+1}+3 \mathcal M_{i,j})^-\tilde{g}^{n+1}_{i,j+2}}{8h^2}$};
\end{align*}
Then $A^s=A_a^- -A^z$ is given as:
\begin{align*} 
&\resizebox{\hsize}{!}{$ \mbox{if $x_i$ is a cell center}, \quad \mathcal A^s (\tilde{\mathbf g}^{n+1})_{ij}=-\frac{(3\mathcal M_{i-1,j}+\mathcal M_{i+1,j})\tilde{g}^{n+1}_{i-1,j}+(\mathcal M_{i-1,j}+3\mathcal M_{i+1,j})\tilde{g}^{n+1}_{i+1,j}}{4h^2}$}\\
&-\frac{(3\mathcal M_{i,j-1}+\mathcal M_{i,j+1})\tilde{g}^{n+1}_{i,j-1}+(\mathcal M_{i,j-1}+3\mathcal M_{i,j+1})\tilde{g}^{n+1}_{i,j+1}}{4h^2};\\
& \mbox{if $x_{ij}$ is an edge center for an edge parallel to $y$-axis},  \quad \mathcal A^s (\tilde{\mathbf g}^{n+1})_{ij}=\\
&  \resizebox{\hsize}{!}{$ \frac{-(3\mathcal M_{i-2,j}-4\mathcal M_{i-1,j}+3\mathcal M_{i,j})^+\tilde{g}^{n+1}_{i-1,j}-(3\mathcal M_{i+2,j}-4\mathcal M_{i+1,j}+3 \mathcal M_{i,j})^+\tilde{g}^{n+1}_{i+1,j}}{8h^2}$}\\
&+\frac{-(3\mathcal M_{i,j-1}+\mathcal M_{i,j+1})\tilde{g}^{n+1}_{i,j-1}-(\mathcal M_{i,j-1}+3\mathcal M_{i,j+1})\tilde{g}^{n+1}_{i,j+1}}{4h^2};\end{align*}
\begin{align*} 
& \mbox{if $x_{ij}$ is an edge center for an edge parallel to $x$-axis},\quad \mathcal A^s (\tilde{\mathbf g}^{n+1})_{ij}=\\
&\resizebox{\hsize}{!}{$  \frac{-(3\mathcal M_{i,j-2}-4\mathcal M_{i,j-1}+3\mathcal M_{i,j})^+\tilde{g}^{n+1}_{i,j-1}-(3\mathcal M_{i,j+2}-4\mathcal M_{i,j+1}+3 \mathcal M_{i,j})^+\tilde{g}^{n+1}_{i,j+1}}{8h^2}$}\\
&+\frac{-(3\mathcal M_{i-1,j}+\mathcal M_{i+1,j})\tilde{g}^{n+1}_{i-1,j}-(\mathcal M_{i-1,j}+3\mathcal M_{i+1,j})\tilde{g}^{n+1}_{i+1,j}}{4h^2};\\
& \mbox{if $x_{ij}$ is a knot},\quad\mathcal A^s (\tilde{\mathbf g}^{n+1})_{ij}=\\
&\resizebox{\hsize}{!}{$ \frac{-(3\mathcal M_{i-2,j}-4\mathcal M_{i-1,j}+3\mathcal M_{i,j})^+\tilde{g}^{n+1}_{i-1,j}-(3\mathcal M_{i+2,j}-4\mathcal M_{i+1,j}+3 \mathcal M_{i,j})^+\tilde{g}^{n+1}_{i+1,j}}{8h^2}$} \\
&\resizebox{\hsize}{!}{$ +\frac{-(3\mathcal M_{i,j-2}-4\mathcal M_{i,j-1}+3\mathcal M_{i,j})^+\tilde{g}^{n+1}_{i,j-1}-(3\mathcal M_{i,j+2}-4\mathcal M_{i,j+1}+3 \mathcal M_{i,j})^+\tilde{g}^{n+1}_{i,j+1}}{8h^2}.$}
\end{align*}

For the positive off-diagonal entries, $\mathcal A^+_a (\tilde{\mathbf g}^{n+1})_{ij}$ is nonzero only for $x_{ij}$ being an edge center or a cell center.
Thus to verify  $A_a^{+}\leq A^zA_d^{-1}A^s$, it suffices to compare $\mathcal A^z\left[\mathcal A_d^{-1}\left(\mathcal A^s(\tilde{\mathbf g}^{n+1})\right)\right]_{ij}$ with 
$\mathcal A_a^{+}(\tilde{\mathbf g}^{n+1})_{ij}$ for $x_{ij}$ being an edge center or a cell center.

If $x_{ij}$ is an edge center for an edge parallel to $y$-axis, then $x_{i\pm1,j}$ are cell centers. Since everything here has a symmetric structure, we only need to compare the coefficients of
$\tilde{g}^{n+1}_{i-2,j}$ in $\mathcal A^z\left[\mathcal A_d^{-1}\left(\mathcal A^s(\tilde{\mathbf g}^{n+1})\right)\right]_{ij}$ and
$\mathcal A_a^{+}(\tilde{\mathbf g}^{n+1})_{ij}$, and the comparison for the coefficients of
$\tilde{g}^{n+1}_{i+2,j}$ will be similar. 
\begin{align*} 
&\mathcal A^s (\tilde{\mathbf g}^{n+1})_{i-1,j} =-\frac{(3\mathcal M_{i-2,j}+\mathcal M_{ij})\tilde{g}^{n+1}_{i-2,j}+(\mathcal M_{i-2,j}+3\mathcal M_{i,j})\tilde{g}^{n+1}_{i,j}}{4h^2}\\
- & \frac{(3\mathcal M_{i-1,j-1}+\mathcal M_{i-1,j+1})\tilde{g}^{n+1}_{i-1,j-1}+(\mathcal M_{i-1,j-1}+3\mathcal M_{i-1,j+1})\tilde{g}^{n+1}_{i-1,j+1}}{4h^2},\\
 &\resizebox{\hsize}{!}{$  \mathcal A_d^{-1}[\mathcal A^s (\tilde{\mathbf g}^{n+1})]_{i-1,j}  =- \frac{(3\mathcal M_{i-2,j}+\mathcal M_{ij})\tilde{g}^{n+1}_{i-2,j}+(\mathcal M_{i-2,j}+3\mathcal M_{ij})\tilde{g}^{n+1}_{i,j}}{4(\mathcal M_{i-2,j}+\mathcal M_{ij}+\mathcal M_{i-1,j+1}+\mathcal M_{i-1,j-1}+h^2\frac{1}{\Delta t}\mathcal M_{i-1,j})}$}\\
&\resizebox{\hsize}{!}{$ - \frac{(3\mathcal M_{i-1,j-1}+\mathcal M_{i-1,j+1})\tilde{g}^{n+1}_{i-1,j-1}+(\mathcal M_{i-1,j-1}+3\mathcal M_{i-1,j+1})\tilde{g}^{n+1}_{i-1,j+1}}{4(\mathcal M_{i-2,j}+\mathcal M_{ij}+\mathcal M_{i-1,j+1}+\mathcal M_{i-1,j-1}+h^2\frac{1}{\Delta t}\mathcal M_{i-1,j})}.$}
 \end{align*}
 
 Since the coefficient of
$\tilde{g}^{n+1}_{i-2,j}$ in $\mathcal A_a^+(\tilde{\mathbf g}^{n+1})_{ij}$ is $(3\mathcal M_{i-2,j}-4 \mathcal M_{i-1,j}+3\mathcal M_{ij})^+/(8h^2)$, we only need to discuss the case $3\mathcal M_{i-2,j}-4 \mathcal M_{i-1,j}+3\mathcal M_{ij}>0$, for which the coefficient of
$\tilde{g}^{n+1}_{i-2,j}$ in $\mathcal A^z\left[\mathcal A_d^{-1}\left(\mathcal A^s(\tilde{\mathbf g}^{n+1})\right)\right]_{ij}$ becomes 
\[ \resizebox{\hsize}{!}{$  \frac{\mathcal M_{i-2,j}+4 \mathcal M_{i-1,j}+9\mathcal M_{ij}}{8 h^2}\frac{ (3\mathcal M_{i-2,j}+\mathcal M_{ij})}{4(\mathcal M_{i-2,j}+\mathcal M_{ij}+\mathcal M_{i-1,j+1}+\mathcal M_{i-1,j-1}+h^2\frac{1}{\Delta t}\mathcal M_{i-1,j})}.$} \]
To ensure the coefficient of
$\tilde{g}^{n+1}_{i-2,j}$ in $\mathcal A^z\left[\mathcal A_d^{-1}\left(\mathcal A^s(\tilde{\mathbf g}^{n+1})\right)\right]_{ij}$ is no less than the coefficient of
$\tilde{g}^{n+1}_{i-2,j}$ in $\mathcal A_a^{+}(\tilde{\mathbf g}^{n+1})_{ij}$, we need  
\[\resizebox{\hsize}{!}{$  \frac{ (\mathcal M_{i-2,j}+4 \mathcal M_{i-1,j}+9\mathcal M_{ij})(3\mathcal M_{i-2,j}+\mathcal M_{ij})}{32 h^2(\mathcal M_{i-2,j}+\mathcal M_{ij}+\mathcal M_{i-1,j+1}+\mathcal M_{i-1,j-1}+h^2\frac{1}{\Delta t}\mathcal M_{i-1,j})}\geq \frac{3 \mathcal M_{i-2,j}-4\mathcal M_{i-1,j}+3\mathcal M_{ij}}{8 h^2}.$}\]
Similar to the one-dimensional case, it suffices to require
\[\resizebox{\hsize}{!}{$ \frac{(\mathcal M_{i-2,j}+4 \mathcal M_{i-1,j}+9\mathcal M_{ij})(3\mathcal M_{i-2,j}+\mathcal M_{ij})}{4 (\mathcal M_{i-2,j}+\mathcal M_{ij}+\mathcal M_{i-1,j+1}+\mathcal M_{i-1,j-1}+h^2\frac{1}{\Delta t}\mathcal M_{i-1,j})}>3 \mathcal M_{i-2,j}-4\mathcal M_{i-1,j}+3\mathcal M_{ij}.$}\]
Equivalently, we need the following inequality holds for any cell center $x_{ij}$: 
\begin{subequations}
 \label{h-condition-2d-cellcenter}
 \begin{align}
\label{h-condition-2d-cellcenter-1}
\resizebox{\hsize}{!}{$  \frac{(\mathcal M_{i-1,j}+4 \mathcal M_{i,j}+9\mathcal M_{i+1,j})(3\mathcal M_{i-1,j}+\mathcal M_{i+1,j})}{4 (\mathcal M_{i-1,j}+\mathcal M_{i+1,j}+\mathcal M_{i,j+1}+\mathcal M_{i,j-1}+h^2\frac{1}{\Delta t}\mathcal M_{i,j})}>3 \mathcal M_{i-1,j}-4\mathcal M_{i,j}+3\mathcal M_{i+1,j}.$}
\end{align}
Notice that \eqref{h-condition-2d-cellcenter-1} was derived for comparing $\mathcal A^z\left[\mathcal A_d^{-1}\left(\mathcal A^s(\tilde{\mathbf g}^{n+1})\right)\right]_{ij}$ and $\mathcal A_a^{+}(\tilde{\mathbf g}^{n+1})_{ij}$ for $x_{ij}$ being an edge center of an edge parallel to $y$-axis. 
If $x_{ij}$ is an edge center of an edge parallel to $x$-axis, then we can derive a similar constraint:
 \begin{align}
\label{h-condition-2d-cellcenter-2}
 \resizebox{\hsize}{!}{$  \frac{(\mathcal M_{i,j-1}+4 \mathcal M_{i,j}+9\mathcal M_{i,j+1})(3\mathcal M_{i,j-1}+\mathcal M_{i,j+1})}{4 (\mathcal M_{i,j-1}+\mathcal M_{i,j+1}+\mathcal M_{i+1,j}+\mathcal M_{i-1,j}+h^2\frac{1}{\Delta t}\mathcal M_{i,j})}>3 \mathcal M_{i,j-1}-4\mathcal M_{i,j}+3\mathcal M_{i,j+1}.$}
\end{align}
\end{subequations}

If $x_{ij}$ is a knot, then $x_{i\pm1,j}$ are edge centers for an edge parallel to $x$-axis. Since everything here has a symmetric structure, we only need to compare the coefficients of
$\tilde{g}^{n+1}_{i-2,j}$ in $\mathcal A^z\left[\mathcal A_d^{-1}\left(\mathcal A^s(\tilde{\mathbf g}^{n+1})\right)\right]_{ij}$ and
$\mathcal A_a^{+}(\tilde{\mathbf g}^{n+1})_{ij}$, and the comparison for the coefficients of
$\tilde{g}^{n+1}_{i+2,j}$, $\tilde{g}^{n+1}_{i,j-2}$ and $\tilde{g}^{n+1}_{i,j+2}$ will be similar. 
\begin{align*}
 &\mathcal A^s (\tilde{\mathbf g}^{n+1})_{i-1,j}=\resizebox{.55\hsize}{!}{$\frac{-(3\mathcal M_{i-2,j}+\mathcal M_{i,j})\tilde{g}^{n+1}_{i-2,j}-(\mathcal M_{i-2,j}+3\mathcal M_{i,j})\tilde{g}^{n+1}_{i,j}}{4h^2}$}\\
&\resizebox{.99\hsize}{!}{$+\frac{-(3\mathcal M_{i-1,j-2}-4\mathcal M_{i-1,j-1}+3\mathcal M_{i-1,j})^+\tilde{g}^{n+1}_{i-1,j-1}-(3\mathcal M_{i-1,j+2}-4\mathcal M_{i-1,j+1}+3 \mathcal M_{i-1,j})^+\tilde{g}^{n+1}_{i-1,j+1}}{8h^2}$}
\\
 &\mathcal A_d^{-1}[\mathcal A^s (\tilde{\mathbf g}^{n+1})]_{i-1,j}=\\
&\resizebox{.99\hsize}{!}{$\frac{-(3\mathcal M_{i-2,j}+\mathcal M_{i,j})\tilde{g}^{n+1}_{i-2,j}-(\mathcal M_{i-2,j}+3\mathcal M_{i,j})\tilde{g}^{n+1}_{i,j}}{\frac{1}{2}(\mathcal M_{i-1,j-2}+4\mathcal M_{i-1,j-1}+18 \mathcal M_{i-1,j}+4\mathcal M_{i-1,j+1}+\mathcal M_{i-1,j+2})+4(\mathcal M_{i-2,j}+\mathcal M_{i,j})+4h^2\frac{1}{\Delta t}\mathcal M_{i-1,j}}$}\\
&\resizebox{.99\hsize}{!}{$+\frac{-(3\mathcal M_{i-1,j-2}-4\mathcal M_{i-1,j-1}+3\mathcal M_{i-1,j})^+\tilde{g}^{n+1}_{i-1,j-1}-(3\mathcal M_{i-1,j+2}-4\mathcal M_{i-1,j+1}+3 \mathcal M_{i-1,j})^+\tilde{g}^{n+1}_{i-1,j+1}}{(\mathcal M_{i-1,j-2}+4\mathcal M_{i-1,j-1}+18 \mathcal M_{i-1,j}+4\mathcal M_{i-1,j+1}+\mathcal M_{i-1,j+2})+8(\mathcal M_{i-2,j}+\mathcal M_{i,j})+8h^2\frac{1}{\Delta t}\mathcal M_{i-1,j}}.$}
 \end{align*}
 For the same reason as above we still only consider the case where $3\mathcal M_{i-2,j}-4 \mathcal M_{i-1,j}+3\mathcal M_{ij}>0$. So the coefficient of
$\tilde{g}^{n+1}_{i-2,j}$ in $\mathcal A^z\left[\mathcal A_d^{-1}\left(\mathcal A^s(\tilde{\mathbf g}^{n+1})\right)\right]_{ij}$ is 
\[\resizebox{.99\hsize}{!}{$ \frac{1}{4 h^2}\frac{(\mathcal M_{i-2,j}+4 \mathcal M_{i-1,j}+9\mathcal M_{ij})(3\mathcal M_{i-2,j}+\mathcal M_{i,j})}{(\mathcal M_{i-1,j-2}+4\mathcal M_{i-1,j-1}+18 \mathcal M_{i-1,j}+4\mathcal M_{i-1,j+1}+\mathcal M_{i-1,j+2})+8(\mathcal M_{i-2,j}+\mathcal M_{i,j})+8\frac{1}{\Delta t}\mathcal M_{i-1,j}h^2}.$} \]
To ensure the coefficient of
$\tilde{g}^{n+1}_{i-2,j}$ in $\mathcal A^z\left[\mathcal A_d^{-1}\left(\mathcal A^s(\tilde{\mathbf g}^{n+1})\right)\right]_{ij}$ is no less than the coefficient of
$\tilde{g}^{n+1}_{i-2,j}$ in $\mathcal A_a^{+}(\tilde{\mathbf g}^{n+1})_{ij}$,  we only need  
\[\resizebox{.99\hsize}{!}{$\frac{2(\mathcal M_{i-2,j}+4 \mathcal M_{i-1,j}+9\mathcal M_{ij})(3\mathcal M_{i-2,j}+\mathcal M_{i,j})}{(\mathcal M_{i-1,j-2}+4\mathcal M_{i-1,j-1}+18 \mathcal M_{i-1,j}+4\mathcal M_{i-1,j+1}+\mathcal M_{i-1,j+2})+8(\mathcal M_{i-2,j}+\mathcal M_{i,j})+8\frac{1}{\Delta t}\mathcal M_{i-1,j}h^2}$}\]
\[> 3 \mathcal M_{i-2,j}-4\mathcal M_{i-1,j}+3\mathcal M_{ij}.\]

Equivalently, we need the following inequality holds for any edge center  $x_{ij}$  for an edge parallel to $x$-axis: 
\begin{subequations}
 \label{h-condition-2d-edgecenter}
\begin{align}\label{h-condition-2d-xedgecenter}
\resizebox{\hsize}{!}{$ \frac{2(\mathcal M_{i-1,j}+4 \mathcal M_{i,j}+9\mathcal M_{i+1,j})(3\mathcal M_{i-1,j}+\mathcal M_{i+1,j})}{(\mathcal M_{i,j-2}+4\mathcal M_{i,j-1}+18 \mathcal M_{i,j}+4\mathcal M_{i,j+1}+\mathcal M_{i,j+2})+8(\mathcal M_{i-1,j}+\mathcal M_{i+1,j})+8c_{i,j}h^2}$}\nonumber\\
> 3 \mathcal M_{i-1,j}-4\mathcal M_{i,j}+3\mathcal M_{i+1,j}.
\end{align}
We also need the following inequality holds for any edge center  $x_{ij}$  for an edge parallel to $y$-axis: 
\begin{align}\label{h-condition-2d-yedgecenter}
\resizebox{\hsize}{!}{$ \frac{2(\mathcal M_{i,j-1}+4 \mathcal M_{i,j}+9\mathcal M_{i,j+1})(3\mathcal M_{i,j-1}+\mathcal M_{i,j-1})}{(\mathcal M_{i-2,j}+4\mathcal M_{i-1,j}+18 \mathcal M_{i,j}+4\mathcal M_{i+1,j}+\mathcal M_{i+2,j})+8(\mathcal M_{i,j-1}+\mathcal M_{i,j+1})+8c_{i,j}h^2}$}\nonumber\\
> 3 \mathcal M_{i,j-1}-4\mathcal M_{i,j}+3\mathcal M_{i,j+1}.
\end{align}
\end{subequations}
We have similar result to the one-dimensional case as following:
\begin{theorem}
 For the scheme \eqref{zxxeqn-fd5}, its matrix representation $A$ satisfies $A^{-1}\geq 0$ if \eqref{h-condition-2d-cellcenter} holds for any cell center $x_{ij}$, \eqref{h-condition-2d-xedgecenter} holds for $x_{ij}$ being  any edge center of an edge parallel to $x$-axis  and   \eqref{h-condition-2d-yedgecenter} holds for  $x_{ij}$ being any edge center of an edge parallel to $y$-axis.
\end{theorem}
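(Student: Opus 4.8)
The plan is to apply Theorem~\ref{newthm3} with the decomposition $A_a^- = A^z + A^s$ that has already been exhibited above. The matrix $A = W^{-1}S + \frac{1}{\Delta t}M^n$ has positive diagonal entries (each diagonal entry of $\mathcal A_d$ is a strictly positive linear combination of the values $\mathcal M_{\cdot}>0$ together with $\mathcal M_{ij}/\Delta t>0$) and nonpositive off-diagonal entries, and by construction $A^s\le 0$ and $A^z\le 0$; so it only remains to verify the two numbered hypotheses of Theorem~\ref{newthm3}: that $(A_d+A^z)\mathbf 1$ is nonzero and nonnegative, and that $A_a^+\le A^zA_d^{-1}A^s$ entrywise.

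For hypothesis~1, I would simply read off the row sums of $A_d+A^z$ from the explicit stencils given above at each of the four node types. At a cell center $A^z$ vanishes, so the row sum equals the (strictly positive) diagonal entry. At an edge center or a knot, the $W^{-1}S$-contributions to the row sum of $A_d+A^z$ telescope exactly to zero — this is the same cancellation that makes $S\mathbf 1=\mathbf 0$, now carried out on the partial stencil $A_d+A^z$ — leaving only the strictly positive term $\mathcal M_{ij}/\Delta t$. In all cases $(A_d+A^z)\mathbf 1>\mathbf 0$, which is stronger than the requirement $(A_d+A^z)\mathbf 1\ge\mathbf 0$, $\ne\mathbf 0$, and in particular $A_d+A^z$ is a nonsingular M-matrix by Theorem~\ref{rowsumcondition-thm}.

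The substance of the proof is hypothesis~2, the entrywise inequality $A_a^+\le A^zA_d^{-1}A^s$. The crucial reduction is that $\mathcal A_a^+(\tilde{\mathbf g}^{n+1})_{ij}=0$ at every cell center, so those rows are automatic and we only need the inequality on rows corresponding to edge centers and knots. Within each such row, the reflection symmetry of the scheme in the $x$- and $y$-directions means it suffices to bound one representative positive off-diagonal coefficient, namely that of $\tilde g^{n+1}_{i-2,j}$, the others following by symmetry. I would then carry out the three-step computation: form $\mathcal A^s(\tilde{\mathbf g}^{n+1})_{i-1,j}$ (and $\mathcal A^s(\tilde{\mathbf g}^{n+1})_{i-2,j}$), divide by the diagonal entry $(\mathcal A_d)_{i-1,j}$ to get $\mathcal A_d^{-1}[\mathcal A^s(\tilde{\mathbf g}^{n+1})]_{i-1,j}$, and apply $\mathcal A^z$, keeping track of which intermediate indices actually feed the coefficient of $\tilde g^{n+1}_{i-2,j}$. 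When $x_{ij}$ is an edge center parallel to the $y$-axis its neighbors $x_{i\pm1,j}$ are cell centers, and after discarding the trivial case $3\mathcal M_{i-2,j}-4\mathcal M_{i-1,j}+3\mathcal M_{ij}\le 0$ the required inequality is exactly \eqref{h-condition-2d-cellcenter-1} (with \eqref{h-condition-2d-cellcenter-2} for the $x$-parallel case); when $x_{ij}$ is a knot its neighbors $x_{i\pm1,j}$ are edge centers, which changes both the form of $\mathcal A^s$ at those neighbors and the diagonal normalization, and the same computation yields \eqref{h-condition-2d-xedgecenter} and \eqref{h-condition-2d-yedgecenter}. Taking these conditions over all node types, Theorem~\ref{newthm3} gives $A^{-1}\ge 0$.

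The hard part will be the bookkeeping in the triple product $A^zA_d^{-1}A^s$: one must correctly identify, for each target off-diagonal position, exactly which paths through $A_d^{-1}$ contribute — in particular the ``cross'' contributions coming from the orthogonal-direction part of $\mathcal A^s$ do not affect the coefficient of $\tilde g^{n+1}_{i-2,j}$, and $\mathcal A_d^{-1}[\mathcal A^s(\tilde{\mathbf g}^{n+1})]_{i-2,j}$ contributes nothing to it — and one must treat the cell-center-neighbor and edge-center-neighbor geometries separately because the denominator from $(\mathcal A_d)_{i-1,j}$ differs. Once the correct scalar inequality is isolated in each case, the passage to the clean sufficient conditions \eqref{h-condition-2d-cellcenter}, \eqref{h-condition-2d-xedgecenter}, \eqref{h-condition-2d-yedgecenter} is the same elementary estimate used in the one-dimensional Theorem~\ref{1d-thm-mesh}: replace the exact product $A^zA_d^{-1}A^s$ by a convenient lower bound and simplify.
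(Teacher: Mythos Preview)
Your proposal is correct and follows essentially the same route as the paper: apply Theorem~\ref{newthm3} with the displayed decomposition $A_a^-=A^z+A^s$, verify $(A_d+A^z)\mathbf 1>\mathbf 0$ directly from the stencils, and reduce hypothesis~2 to comparing the coefficient of $\tilde g^{n+1}_{i-2,j}$ (and its symmetric counterparts) in $\mathcal A^z\!\left[\mathcal A_d^{-1}\mathcal A^s\right]$ versus $\mathcal A_a^+$ at edge centers and at knots, which after reindexing yields precisely \eqref{h-condition-2d-cellcenter} and \eqref{h-condition-2d-edgecenter}. Two small slips worth fixing: first, $A$ does \emph{not} have nonpositive off-diagonal entries in general --- that is the whole reason $A_a^+$ enters --- but Theorem~\ref{newthm3} does not assume this and the rest of your argument treats $A_a^+$ correctly; second, at edge centers and knots the row sum of $A_d+A^z$ is not exactly $\mathcal M_{ij}/\Delta t$ (the orthogonal-direction diagonal terms survive as well), though it is still strictly positive, which is all you need.
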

\begin{theorem}\label{2d-thm-mesh-1}
For the scheme \eqref{zxxeqn-fd5}, its matrix representation $A$ satisfies $A^{-1}\geq 0$ if  
the following mesh constraint is achieved for all edge centers $x_{ij}$:
\begin{equation}
\label{2d-constraint}
\frac{11}{2}+\frac{h^2}{\Delta t}<7\frac{1}{\max_{J_{ij}} \mathcal M}\frac{\min_{J_{ij}} \mathcal M^2}{3\max_{J_{ij}} \mathcal M-2 \min_{J_{ij}} \mathcal M},\end{equation}
where $J_{ij}$ is the union of two finite element cells: if $x_{ij}$ is an edge center of an edge parallel to $x$-axis, then  $J_{ij}= [x_{i-1}, x_{i+1}]\times[y_{j-2}, y_{j+2}]$; if $x_{ij}$ is an edge center of an edge parallel to $y$-axis, then  $J_{ij}= [x_{i-2}, x_{i+2}]\times[y_{j-1}, y_{j+1}]$. Here the maximum and minimum of $\mathcal M$ are those of grid point values of $\mathcal M$ in $J_{ij}$.
\end{theorem}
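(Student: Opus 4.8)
The plan is to show that the single constraint \eqref{2d-constraint}, imposed at every edge center, forces each of the four families of pointwise inequalities \eqref{h-condition-2d-cellcenter-1}, \eqref{h-condition-2d-cellcenter-2}, \eqref{h-condition-2d-xedgecenter}, \eqref{h-condition-2d-yedgecenter}; the preceding theorem then yields $A^{-1}\geq 0$. In every case the mechanism is the same crude estimate used to pass from \eqref{h-condition-2} to \eqref{1D-cfl} in the proof of Theorem \ref{1d-thm-mesh}: if the right-hand side $3\mathcal M_\bullet-4\mathcal M_\bullet+3\mathcal M_\bullet$ is nonpositive the inequality is trivial, so I may assume it is positive; then I bound the numerator of the left side from below by replacing every $\mathcal M$ there by the local minimum $b$, bound the denominator from above by replacing every $\mathcal M$ there (including in the zeroth-order term proportional to $\tfrac{h^2}{\Delta t}\mathcal M$) by the local maximum $a$, and bound the right side from above by $3a-4b+3a=2(3a-2b)$.

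First I would treat an edge center, say one lying on an edge parallel to the $x$-axis; the edge parallel to the $y$-axis is identical after swapping coordinates. All grid values occurring in \eqref{h-condition-2d-xedgecenter} have indices inside $J_{ij}=[x_{i-1},x_{i+1}]\times[y_{j-2},y_{j+2}]$, so with $a=\max_{J_{ij}}\mathcal M$, $b=\min_{J_{ij}}\mathcal M$ the numerator is $\geq 2(1+4+9)b\cdot(3+1)b=112\,b^2$ and the denominator is $\leq (1+4+18+4+1)a+8\cdot 2a+8\tfrac{h^2}{\Delta t}a=4a\bigl(11+2\tfrac{h^2}{\Delta t}\bigr)$. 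Hence \eqref{h-condition-2d-xedgecenter} holds whenever $\dfrac{112\,b^2}{4a\bigl(11+2h^2/\Delta t\bigr)}>2(3a-2b)$, and this rearranges exactly to \eqref{2d-constraint}. This also explains the appearance of the constant $\tfrac{11}{2}$: it is half of the total weight $1+4+18+4+1+16=44$ in the denominator of the edge-center inequality.

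Next I would dispatch the cell-center inequalities \eqref{h-condition-2d-cellcenter-1}–\eqref{h-condition-2d-cellcenter-2}. At a cell center $x_{ij}$, the five values appearing there all have indices inside $J_{i+1,j}$, the two-cell region attached to the adjacent edge center $x_{i+1,j}$ (which lies on an edge parallel to the $y$-axis); for \eqref{h-condition-2d-cellcenter-2} one uses instead an adjacent horizontal-edge center. The same replacement, now with $a=\max_{\mathrm{cell}}\mathcal M$, $b=\min_{\mathrm{cell}}\mathcal M$ over the five-point cell stencil, shows \eqref{h-condition-2d-cellcenter-1} follows from $4+\tfrac{h^2}{\Delta t}<\dfrac{7\,b^2}{a(3a-2b)}$. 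Since $\dfrac{b^2}{a(3a-2b)}$ is nondecreasing in $b$ and nonincreasing in $a$, its value over the smaller cell stencil is at least its value over $J_{i+1,j}$; combined with $4<\tfrac{11}{2}$, the constraint \eqref{2d-constraint} written for $x_{i+1,j}$ implies the required weaker inequality. Together with the edge-center cases this verifies \eqref{h-condition-2d-cellcenter}, \eqref{h-condition-2d-xedgecenter}, \eqref{h-condition-2d-yedgecenter}, and the conclusion follows from the preceding theorem.

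I expect the only genuine work to be the bookkeeping: confirming that every grid value in each of the four inequalities really lies in the advertised region $J_{ij}$ (and, for the cell-center ones, inside an appropriately chosen neighboring $J_{i\pm1,j}$ or $J_{i,j\pm1}$), and checking that the edge-center constraint is the binding one so that the cell-center constraints need not be listed separately in the statement. The sign-case split and the $a,b$ substitution are routine once the stencils are pinned down.
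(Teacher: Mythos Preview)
Your proposal is correct and follows precisely the approach the paper intends. The paper does not actually write out a proof of this theorem; it simply states it after the phrase ``We have similar result to the one-dimensional case as following,'' leaving the reader to repeat the crude $a,b$ estimate from the proof of Theorem~\ref{1d-thm-mesh}. Your argument does exactly that, and your arithmetic for the edge-center case is right: numerator $\geq 112\,b^2$, denominator $\leq 4a(11+2h^2/\Delta t)$, right side $\leq 2(3a-2b)$, giving $\tfrac{11}{2}+\tfrac{h^2}{\Delta t}<\tfrac{7b^2}{a(3a-2b)}$.

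You in fact go further than the paper by explicitly checking that the cell-center inequalities \eqref{h-condition-2d-cellcenter} are dominated: your observation that they reduce to $4+\tfrac{h^2}{\Delta t}<\tfrac{7b^2}{a(3a-2b)}$ over a five-point stencil contained in a neighboring $J_{i\pm 1,j}$ (or $J_{i,j\pm 1}$), combined with the monotonicity of $b^2/\bigl(a(3a-2b)\bigr)$ and $4<\tfrac{11}{2}$, is exactly the bookkeeping the paper skips when it states the constraint only at edge centers. Your identification of which neighboring edge center to use (e.g., $x_{i+1,j}$ lies on an edge parallel to the $y$-axis when $x_{ij}$ is a cell center, so $J_{i+1,j}=[x_{i-1},x_{i+3}]\times[y_{j-1},y_{j+1}]$ contains the five-point cell stencil) is also correct.
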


\begin{remark}
Similarly as the one dimensional case, for smooth $\mathcal{M}$, the constraint \eqref{2d-constraint} can be satisfied for small $h$. 
\end{remark}

\section{Positivity and energy dissipation}
\label{sec-positivity}
In this section, we prove a few properties of the proposed scheme (\ref{zxxeqn-fd3}), among which positivity and energy dissipation are the most important ones. First of all, we rewrite \eqref{zxxeqn-fd3} as 
\begin{equation} \label{scheme11}
A \tilde{\mathbf g}^{n+1}=\mathbf g^{n}, \quad A:=I+ \Delta t  (M^n)^{-1}W^{-1}S.
\end{equation}
From the previous section, we know that the matrix $A$ is invertible and $A^{-1}\geq 0$ under suitable mesh size and time step constraints. Specifically, the second order scheme is always monotone $A^{-1}\geq 0$ (entrywise inequality) for any mesh  size and time step. For the fourth order scheme, assume that the mesh size and time step satisfy the constraints \eqref{1D-cfl} and \eqref{2d-constraint} in one and two dimensions, respectively, we also have  $A^{-1}\geq 0$.

\subsection{Conservation, steady state and positivity}
It is straightforward to verify the following properties:
\begin{enumerate}
\item {\it Mass conservation of $\rho$}. Multiplying ${\bf 1}^T WM^n$ from the left on both sides of (\ref{scheme11}) and using ${\bf 1}^TS=\mathbf 0^T$ gives
$${\bf 1}^T WM^n {\bf \tilde g}^{n+1}={\bf 1}^T WM^n \mathbf g^{n},$$
which is
\[\mathbf 1^T W\boldsymbol{\rho}^{n+1}=\mathbf 1^T W\boldsymbol{\rho}^{n}, \]
or equivalently,
\[\sum_i w_i \rho_i^{n+1}=\sum_i w_i \rho_i^{n}.\]
\item   {\it Mass conservation of $c$}. By setting $v_h\equiv 1$ in \eqref{scheme-c-n}, we get 
$\alpha\langle c_h^n, 1\rangle=\langle \rho_h^n, 1\rangle$ thus
\[ \alpha \sum_i w_i c_i^{n}=\sum_i w_i \rho_i^{n}. \]
\item {\it Steady state preserving}. If ${\bf g}^n=C{\bf 1}$ for some constant $C$, then using $S{\bf 1}={\bf 0}$ it can be easily seen that ${\bf \tilde g}^{n+1}=C{\bf 1}$ is the unique solution to (\ref{scheme11}). In terms of the $\rho$ variable, this implies that  
$$\rho^n_i=C\mM_i^n, \forall i\Longrightarrow \rho^{n+1}_i=C\mM_i^n,\forall i.$$

 \item {\it Positivity of $\rho$}.  If $\rho^n_i>0$ for every $i$, then $g_i^n=\rho_i^n/\mM_i^n>0$ for every $i$. When $A^{-1}\geq 0$ holds, we have $\tilde{g}^{n+1}_i>0$, consequently $\rho^{n+1}_i=\mM_i^n \tilde g_i^{n+1}>0$ for every $i$.  
 
\item {\it Positivity of $c$}. 
All discussion in Section \ref{sec-mono} applies to the scheme \eqref{scheme-c-n} with $\alpha>0$ and  suitable boundary conditions. Even though we only consider Neumann type boundary condition in this paper, the results hold also for Dirichlet type boundary conditions. In particular, the second order scheme is monotone. By setting $\mathcal M\equiv 1$ and $\Delta t=\frac{1}{\alpha}$ in Theorem \ref{1d-thm-mesh} and Theorem \ref{2d-thm-mesh-1}, the fourth order scheme is also monotone if $\alpha h^2\leq 5$ in one dimension and $\alpha h^2\leq \frac32$ in two dimensions. When monotonicity in  \eqref{scheme-c-n}  holds, positivity of $c$ is implied by positivity of $\rho$. 
\end{enumerate}

\subsection{Energy dissipation}

In this subsection, we show that the fully discrete scheme (\ref{scheme11}) decays energy. Following the continuous counterpart (\ref{energy1}), we define the discrete energy as
\begin{equation}
E^n:=  \left\langle \rho^n \log \frac{\rho^n}{\mathcal M^n}-\rho^n+\frac12c^n\rho^n, 1\right\rangle=\sum_i w_i \left( \rho^n_i \log \frac{\rho_i^n}{\mathcal M_i^n}-\rho^n_i+\frac12c^n_i\rho_i^n\right).
\label{energy-definition}
\end{equation}
Note that by using $c_i^n$ we consider the Keller-Segel equation directly. In the Fokker-Planck case, the last term $\frac12c^n_i\rho_i^n$ in $E^n$ is zero.

\begin{theorem}
Assume monotonicity holds for scheme \eqref{scheme11}, i.e.,  $A^{-1}\geq 0$, for the energy defined in (\ref{energy-definition}) we have $E^{n+1}\leq E^n.$
\end{theorem}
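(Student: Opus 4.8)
The plan is to mimic the continuous energy-dissipation computation \eqref{energy2} at the discrete level, exploiting convexity of the relevant energy pieces and the fact that the scheme \eqref{scheme11} is (after multiplying by $WM^n$) a symmetric positive semi-definite operator acting on $\mathbf g$. First I would write the convex free-energy density in the $g$-variable. Since $\rho^n_i = \mathcal M^n_i g^n_i$, the internal-energy part of $E^n$ is $\sum_i w_i \mathcal M^n_i (g^n_i \log g^n_i - g^n_i)$, i.e. a sum of copies of the strictly convex function $\phi(s) = s\log s - s$ weighted by the positive quantities $w_i\mathcal M^n_i$. Convexity gives the pointwise inequality $\phi(g^{n+1}_i) - \phi(g^n_i) \le \phi'(g^{n+1}_i)(g^{n+1}_i - g^n_i) = \log(g^{n+1}_i)(g^{n+1}_i - g^n_i)$ (here I use $\tilde g^{n+1}$ and drop tildes). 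Summing against $w_i\mathcal M^n_i$ and using the scheme in the form $WM^n(\tilde{\mathbf g}^{n+1} - \mathbf g^n) = -\Delta t\, S\tilde{\mathbf g}^{n+1}$, the leading term becomes $-\Delta t\,(\log \tilde{\mathbf g}^{n+1})^T S \tilde{\mathbf g}^{n+1}$, which I want to show is $\le 0$. This is the discrete analogue of $-\int \rho|\nabla\xi|^2$; it needs an auxiliary monotonicity/positivity argument rather than symmetry of $S$ alone, because $\log \tilde{\mathbf g}^{n+1}$ appears on one side and $\tilde{\mathbf g}^{n+1}$ on the other.

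The key lemma I would isolate and prove is: if $S$ is the spectral-element stiffness matrix (symmetric, positive semi-definite, nullspace $\mathbf 1$, with the M-matrix-type sign structure established in Section \ref{sec-mono}) and $\mathbf u > 0$ entrywise, then $(\log \mathbf u)^T S \mathbf u \ge 0$. For the second-order scheme $S$ has nonpositive off-diagonal entries and zero row sums, so one can write $(\log\mathbf u)^T S\mathbf u = \sum_{i<j}(-S_{ij})(\log u_i - \log u_j)(u_i - u_j)$, which is a sum of nonnegative terms because $\log$ is increasing. For the fourth-order scheme $S$ is not an M-matrix, so I would instead use the factorization of the scheme matrix $A$ (equivalently $W^{-1}S + \frac{1}{\Delta t}M$) as a product of M-matrices guaranteed by Theorems \ref{thm2}–\ref{newthm3}; more robustly, I would argue directly that $A^{-1}\ge 0$ together with the identity $A\tilde{\mathbf g}^{n+1} = \mathbf g^n$ forces the desired sign, or equivalently bound $E^{n+1} - E^n$ using only the monotone-operator inequality $(\mathbf x - \mathbf y)^T(\text{something})\ge 0$ implied by $A^{-1}\ge 0$. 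The cleanest route may be to avoid the $(\log \mathbf u)^T S\mathbf u$ estimate entirely: use convexity to get $E^{n+1} - E^n \le \langle$(gradient of discrete energy at step $n+1$)$, \boldsymbol\rho^{n+1} - \boldsymbol\rho^n\rangle_W$, recognize the discrete gradient as $W(\log \tilde{\mathbf g}^{n+1} + \text{(the } c\text{-dependent terms)})$, and then show this pairing with $\boldsymbol\rho^{n+1} - \boldsymbol\rho^n = -\Delta t\, W^{-1} S (M^n)^{-1}\boldsymbol\rho^{n+1}$ is nonpositive.

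There are two extra complications to handle. First, the $c$-terms: the discrete energy contains $\frac12 c^n_i\rho^n_i$ and $-\log\mathcal M^n_i = -c^n_i$ inside the internal-energy term, and $\mathcal M^n = e^{c^n}$ is evaluated at the old time level while $\rho$ is updated — so I must carry the cross terms carefully and use the symmetry of the discrete elliptic solver \eqref{scheme-c-n} (the matrix $\frac1{h^2}H\otimes H + \alpha I$ is symmetric) together with its positive definiteness to absorb the quadratic-in-$(c^{n+1}-c^n)$ remainder with the favorable sign, exactly as in the semi-implicit Keller–Segel energy estimates of \cite{LWZ18, HH20}. Convexity of $t\mapsto \frac12 t^2$ in the chemical energy $\frac12\langle c,\rho\rangle = \frac12\langle \frac1{h^2}H c + \alpha c, c\rangle$-type expression gives the needed one-sided bound. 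Second, I must confirm $\tilde{\mathbf g}^{n+1} > 0$ strictly (so that $\log$ is defined), which follows from the already-established positivity of $\rho$ under $A^{-1}\ge 0$ provided $\boldsymbol\rho^n > 0$; for the initial step one takes $\rho_0 > 0$, or handles zeros by a limiting argument since $\phi$ extends continuously. The main obstacle I anticipate is the fourth-order case of the sign of the diffusion term $(\log\tilde{\mathbf g}^{n+1})^T S\tilde{\mathbf g}^{n+1}$: unlike the M-matrix case there is no termwise nonnegative splitting, so I expect to need the product-of-M-matrices structure — write $S/\!\!\sim = M_1 M_2$ (or use that $W^{-1}S$ composed with a diagonal is a product of M-matrices) and chase the sign through, which is the delicate bookkeeping step of the whole argument.
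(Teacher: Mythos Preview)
Your route through the convexity inequality $\phi(\tilde g^{n+1}_i)-\phi(g^n_i)\le \phi'(\tilde g^{n+1}_i)(\tilde g^{n+1}_i-g^n_i)$ and the scheme identity leads to the term $-\Delta t\,(\log\tilde{\mathbf g}^{n+1})^T S\,\tilde{\mathbf g}^{n+1}$, and the whole argument then hinges on showing $(\log\mathbf u)^T S\,\mathbf u\ge 0$ for $\mathbf u>0$. For the second-order scheme this follows from the M-matrix sign pattern as you note, but for the fourth-order $S$ you have no argument: you yourself flag it as ``the main obstacle'' and propose only to ``chase the sign through'' the product-of-M-matrices factorization. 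That factorization concerns the full matrix $M^n+\Delta t\,W^{-1}S$, not $S$ alone, and there is no evident mechanism by which $A=M_1M_2$ with $M_1,M_2$ M-matrices would force $(\log\mathbf u)^T S\,\mathbf u\ge 0$; monotone-operator inequalities of the type $(\mathbf x-\mathbf y)^T(\cdot)\ge 0$ do not obviously apply either, since $\log\tilde{\mathbf g}^{n+1}$ and $\tilde{\mathbf g}^{n+1}$ are related nonlinearly. So as written, the proposal has a genuine gap precisely in the case that is the point of the theorem.

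The paper avoids this difficulty entirely and uses only the hypothesis $A^{-1}\ge 0$, uniformly for both schemes. The key observation is that $S\mathbf 1=\mathbf 0$ gives $A\mathbf 1=\mathbf 1$, hence $A^{-1}\mathbf 1=\mathbf 1$; together with $A^{-1}\ge 0$ this means each $\tilde g^{n+1}_i=\sum_j a^{ij}g^n_j$ is a \emph{convex combination} of the $g^n_j$. Jensen's inequality for $x\mapsto x\log x$ then gives $\tilde g^{n+1}_i\log\tilde g^{n+1}_i\le \sum_j a^{ij}g^n_j\log g^n_j$. One also has $\mathbf 1^T M^nW A=\mathbf 1^T M^nW$ (again from $\mathbf 1^T S=\mathbf 0$), i.e.\ $\sum_i w_i\mathcal M^n_i a^{ij}=w_j\mathcal M^n_j$, which lets you sum the Jensen inequality against $w_i\mathcal M^n_i$ and collapse the double sum, yielding $\sum_i w_i\rho^{n+1}_i\log(\rho^{n+1}_i/\mathcal M^n_i)\le \sum_i w_i\rho^n_i\log(\rho^n_i/\mathcal M^n_i)$ directly. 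No sign control on $(\log\mathbf u)^T S\,\mathbf u$ is ever needed. Your treatment of the $c$-terms (symmetry of the elliptic solver to get $\langle\rho^n,c^{n+1}\rangle=\langle\rho^{n+1},c^n\rangle$, then $II=-\tfrac12\langle\rho^{n+1}-\rho^n,c^{n+1}-c^n\rangle\le 0$) matches the paper and is fine.
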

\begin{proof}

First of all,
\begin{equation*}\resizebox{\hsize}{!}{$ 
\begin{split}
&E^{n+1}-E^n\\
=&\sum_i w_i \left( \rho^{n+1}_i \log \frac{\rho_i^{n+1}}{\mathcal M_i^{n+1}}-\rho^{n+1}_i+\frac12c^{n+1}_i\rho_i^{n+1}\right)-\sum_i w_i \left( \rho^n_i \log \frac{\rho_i^n}{\mathcal M_i^n}-\rho^n_i+\frac12c^n_i\rho_i^n\right)\\
=&\sum_i w_i \left( \rho^{n+1}_i \log \frac{\rho_i^{n+1}}{\mathcal M_i^{n+1}}+\frac12c^{n+1}_i\rho_i^{n+1}\right)-\sum_i w_i \left( \rho^n_i \log \frac{\rho_i^n}{\mathcal M_i^n}+\frac12c^n_i\rho_i^n\right)\\
=&I+II,
\end{split}$}
\end{equation*}
where we used mass conservation in the second equality and 
\begin{equation*}
\begin{split}
&I:=\sum_i w_i \rho^{n+1}_i \log \frac{\rho^{n+1}_i}{\mathcal M^n_i}-   \sum_i w_i \rho^{n}_i \log \frac{\rho^{n}_i}{\mathcal M^n_i },\\
&II:=\sum_i w_i \left(\rho_i^{n+1}c_i^n-\frac{1}{2}\rho_i^{n+1}c_i^{n+1}-\frac{1}{2} \rho_i^nc_i^n\right).
\end{split}
\end{equation*}

On the other hand, it is easy to see $A^{-1}{\bf 1}={\bf 1}$, since $A{\bf 1}={\bf 1}$. Let $a^{ij}$ be the entries of $A^{-1}$, then $\sum_{j}a^{ij}=1$ and $a^{ij}\geq 0$ for all $i$, $j$ if the monotonicity holds. Furthermore, since $M^n$ and
$W$ are diagonal matrices, $M^nW=WM^n$ thus ${\bf 1}^TM^nW A= \mathbf 1^T M^nW (I+\Delta t (M^n)^{-1}W^{-1}S)={\bf 1}^TM^nW$. So we have ${\bf 1}^TM^nWA^{-1}={\bf 1}^TM^nW$,
which is $\sum_{i}\mathcal M^n_i w_i  a^{ij} =\mathcal M^n_j w_j$ componentwise.
 
The above discussion implies that $\tilde{g}^{n+1}_i=\sum_{j} a^{ij}g^n_j$ is a convex combination. The function $x\log x$ is convex, so by Jensen's inequality, 
 \begin{equation*}
 \tilde{g}^{n+1}_i \log (\tilde{g}^{n+1}_i)\leq \sum_{j} a^{ij}g^n_j\log (g^n_j).
 \label{emp}
 \end{equation*}
Then 
 \[\resizebox{\hsize}{!}{$  \sum_i w_i \rho^{n+1}_i \log (\rho^{n+1}_i/\mathcal M^n_i )= \sum_i w_i \mathcal M^n_i \tilde{g}^{n+1}_i \log (\tilde{g}^{n+1}_i)\leq \sum_i w_i \mathcal M^n_i \sum_{j} a^{ij}g^n_j\log (g^n_j) $} \]
 \[\resizebox{\hsize}{!}{$  =\sum_j\left (\sum_i a^{ij}w_i \mathcal M^n_i \right) g^n_j\log (g^n_j)=\sum_j w_j \mathcal M^n_j  g^n_j\log (g^n_j)=\sum_i w_i \rho^{n}_i \log (\rho^{n}_i/\mathcal M^n_i ). $}\]
We thus proved $I\leq 0$. The proof is done if it is the Fokker-Planck equation.

If it is the Keller-Segel equation, we still need to show $II\leq 0$. Recall that we use the scheme (\ref{scheme-c-n}) for $c$:
\begin{equation} \label{schemeC1}
 \langle \nabla c_h^n, \nabla v_h \rangle +\alpha\langle  c_h^n,  v_h \rangle= \langle  \rho^n,  v_h \rangle,\quad \forall v_h\in V^h.
\end{equation}
At $t^{n+1}$, this is
\begin{equation}  \label{schemeC2}
 \langle \nabla c_h^{n+1}, \nabla v_h \rangle +\alpha\langle  c_h^{n+1},  v_h \rangle= \langle  \rho^{n+1},  v_h \rangle,\quad \forall v_h\in V^h.
\end{equation}
Subtracting (\ref{schemeC1}) from (\ref{schemeC2}) gives
\[ \langle \nabla (c_h^{n+1}-c_h^n), \nabla v_h \rangle +\alpha\langle  c_h^{n+1}-c_h^n,  v_h \rangle= \langle  \rho^{n+1}-\rho^n,  v_h \rangle,\quad \forall  v_h\in V^h.\]
By setting $v_h=-(c_h^{n+1}-c_h^n)\in V^h$, we obtain
\begin{equation*}
\resizebox{\hsize}{!}{$ -\langle  \rho^{n+1}-\rho^n,  c^{n+1}-c^n \rangle=-\langle \nabla (c^{n+1}-c^n), \nabla (c^{n+1}-c^n) \rangle-\alpha\langle  c^{n+1}-c^n,  c^{n+1}-c^n \rangle \leq 0.$}
\end{equation*}
On the other hand, choosing $v_h=c_h^{n+1}$ in (\ref{schemeC1}) and $v_h=c_h^{n}$ in (\ref{schemeC2}) and subtracting both, we obtain
\begin{equation*}
\langle  \rho^n,  c_h^{n+1} \rangle=\langle  \rho^{n+1},  c_h^n \rangle.
\end{equation*}
Therefore,
\begin{equation*}
II=\langle \rho^{n+1},c_h^n\rangle-\frac{1}{2}\langle \rho^{n},c_h^n\rangle-\frac{1}{2}\langle \rho^{n+1},c_h^{n+1}\rangle
=-\frac{1}{2}\langle \rho^{n+1}-\rho^n, c_h^{n+1}-c_h^n\rangle\leq0.
\end{equation*}
\end{proof}


 \section{Numerical tests}
 \label{sec-test}
 
In this section we provide numerical examples to demonstrate the performance of the proposed schemes. We will mainly focus on the Keller-Segel equation as it is more challenging than the Fokker-Planck equation. But one example about the Fokker-Planck equation will be included.

We consider the Keller-Segel system in  a square domain $\Omega$ with a source term:
 \[\begin{cases}
 \partial_t\rho =\Delta \rho-\nabla \cdot( \rho \nabla c)+f(x,y),\\
 -\Delta c+c=\rho,
 \end{cases}\]
 with homogeneous Neumann boundary conditions $\nabla\rho\cdot\mathbf{n}|_{\partial \Omega}=
 \nabla c\cdot\mathbf{n}|_{\partial \Omega}=0$. It is straightforward to verify that the system above is equivalent to 
 \begin{equation}\begin{cases}
 \partial_t\rho =\nabla \cdot (\mathcal M \nabla \frac{\rho}{\mathcal M})+f(x,y),\quad \mathcal M:=e^{c },\\
 -\Delta c+c=\rho,
 \end{cases}
 \label{keller-segel-eqn}
 \end{equation}
 with boundary conditions $\nabla c\cdot\mathbf{n}|_{\partial \Omega}=0$ and $\nabla \frac{\rho}{\mathcal M}\cdot \mathbf n |_{\partial \Omega}=0$.
 We test the second order and fourth order semi-implicit finite difference schemes for solving \eqref{keller-segel-eqn}. 
 
 \subsection{Accuracy test for the Keller-Segel system with a source term}
 
 The proposed semi-implicit schemes can be at most first order accurate in time. For testing the spatial accuracy, we consider
an initial condition $\rho(0,x,y)=3\cos x\cos y+3$, $c(0,x,y)=\cos x\cos y+3$ on $\Omega=(0,\pi)\times(0,\pi)$ and a source term $f(x,y)=-3\cos(2x)\cos^2 y-3\cos^2 x\cos(2y)$ so that the exact solution is a steady state solution.  
The time step is set as $\Delta t=\Delta x$ and errors at $T=1$ are given 
in Table \ref{table-accuracy} where $l^2$ error is defined as 
$$\sqrt{\Delta x \Delta y\sum_i \sum_j |u_{ij}-u(x_i, y_j)|^2}$$ with $u_{ij}$ and $u(x,y)$ denoting the numerical and exact solutions, respectively. 
We observe the expected order of spatial accuracy. 
\begin{table} [htbp]
    \resizebox{\textwidth}{!}{%
\begin{tabular}{|c|c|c|c|c|c|c|c|c|}
\hline
\multirow{2}{*}{FD Grid} & 
\multicolumn{4}{c|}{the second order scheme}  &
\multicolumn{4}{c|}{the fourth order scheme} \\ 
\cline{2-9}
 & 
 $l^2$ error & order & $l^\infty$ error & order  &
$l^2$ error & order & $l^\infty$ error & order  \\
\hline 
$9\times 9$  & 
    2.09E-1     & - &     2.51E-1     & - & 
    1.37E-2     & - &     1.08E-2     & - 
 \\
\hline
$17\times 17$ & 
    4.11E-2     &     2.34     &     6.82E-2     &     1.89     & 
    7.70E-4     &     4.16     &     1.32E-3     &     3.03     
 \\
\hline
$33\times 33$ &
    8.19E-3     &     2.33     &     1.70E-2     &     2.00     & 
    4.52E-5     &     4.09    &     9.72E-5     &     3.76    
 \\
\hline
$65\times 65$ &
    1.77E-3     &     2.21     &     4.29E-3     &     1.99    & 
    2.76E-6     &     4.03     &     6.41E-6     &     3.92     
 \\ \hline
 $129\times 129$ &
    4.04E-4     &     2.13     &     1.08E-3     &     1.99     & 
    1.71E-7     &     4.01     &     4.09E-7     &     3.97     
 \\ \hline
\end{tabular}}
\label{table-accuracy}
\caption{Accuracy test for the Keller-Segel system with a source term.}
\end{table}

\subsection{A steady state solution of the Fokker-Planck equation}

We now  test the second order and fourth order schemes for solving the following two-dimensional linear Fokker-Planck equation on $\Omega=(-3,3)\times(-3,3)$:
 \begin{equation} 
 \partial_t\rho=\Delta \rho+\nabla \cdot( \rho \nabla \mV), \quad \mV=\frac{x^2+y^2}{2}.
 \end{equation}
 It is equivalent to 
 \[\partial_t\rho =\nabla \cdot (\mathcal M \nabla \frac{\rho}{\mathcal M}), \quad \mathcal M:=e^{-\frac{x^2+y^2}{2}},
 \]
 with the boundary condition $\nabla \frac{\rho}{\mathcal M}\cdot \mathbf n |_{\partial \Omega}=0$.
This equation admits an exact solution:
\[\rho(t,x,y)=\frac{1}{2\pi (1-e^{-2t})}e^{-\frac{x^2+y^2}{2(1-e^{-2t})}}.\]
We use   $\rho(1,x,y)$  as an initial condition and march to time $T=20$ for approximating 
the steady state 
\[\rho_\infty(x,y)=\frac{1}{2\pi }e^{-\frac{x^2+y^2}{2}}.\]

To demonstrate the advantages of our schemes, we also compare them to the second order spatial discretization  with fully explicit forward Euler time discretization, which can also
be proven positivity-preserving and energy-dissipative but under a small time step constraint $\Delta t=\mathcal O(\Delta x^2)$. 
In Figure \ref{FP}, we can see that the convergence of the explicit scheme to the steady state solution is much slower. Moreover, the small time step $\Delta t=\mathcal O(\Delta x^2)$ is usually
not desired in applications. The convergence to numerical steady state solution of two implicit schemes are similar. On the other hand, the fourth order scheme produces slightly smaller errors in the numerical steady state solution. 

In Figure \ref{FP}, after $T=10$, steady state solution errors of both implicit schemes stay flat, and in each time step $\|\rho^{n+1}-\rho^n\|_\infty$ is less than $10^{-10}$, which is the accuracy tolerance of preconditioned conjugate gradient linear system solver. 
At $T=20$, compared to the exact steady state, the fourth order scheme with implicit time stepping produces error in discrete 2-norm as $8.18\times 10^{-4}$ and 
the second order scheme with implicit time stepping produces error in discrete 2-norm $8.35\times 10^{-4}$. We emphasize both implicit schemes are used on the same grid and the difference in computational cost is marginal, thus this is a clear advantage
of using a high order accurate spatial discretization, even if the time accuracy is only first order.

      \begin{figure}[htbp]
      \begin{center}
 \subfigure[Three schemes are used on the same $33\times 33$ grid. The implicit schemes use  a time step $\Delta t=\mathcal O(\Delta x)$
and the explicit scheme uses a time step $\Delta t=\mathcal O(\Delta x^2)$. ]{\includegraphics[scale=0.35]{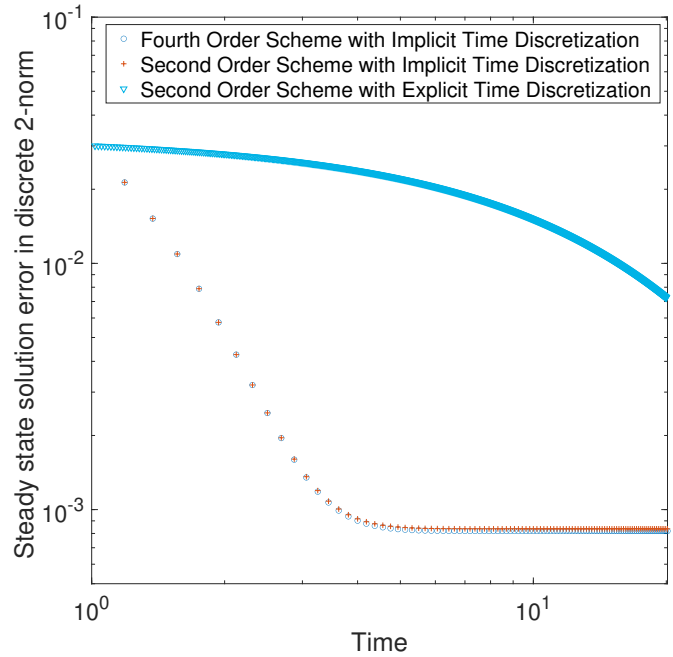} }
 \hspace{.1in}
 \subfigure[The steady state solution. Numerical solution was generated by the fourth order scheme on a $33\times 33$ grid. ]{\includegraphics[scale=0.4]{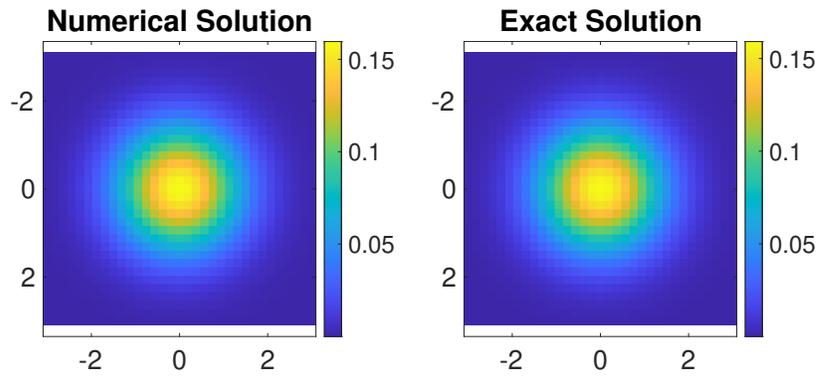}}
 \caption{Linear Fokker-Planck equation on $\Omega=(-3,3)\times(-3,3)$. }
\label{FP}
\end{center}
 \end{figure}

 \subsection{A smooth solution of the Keller-Segel system}

For the Keller-Segel system,  it is well-known that there is a critical value for total mass in initial conditions, below which a globally  well-posed solution exists \cite{dolbeault2004optimal, blanchet2006two}.  
 We solve the system \eqref{keller-segel-eqn} with $f(x,y)\equiv 0$ on $ \Omega=(-2, 2)\times(-2, 2)$ with an initial condition $\rho(0,x,y)=\frac{60}{1+40(x^2+y^2)}$ and its mass is below the critical value. See both schemes on the same grid of $101\times 101$ points  at $T=2$ in Figure \ref{KS-smooth}. 
   For both schemes, $\Delta t=\Delta x$ is used. 
Then we run two schemes for longer time until  $\|\rho^{n+1}-\rho^n\|_\infty\leq 10^{-8}$ is satisfied. Both schemes reach $\|\rho^{n+1}-\rho^n\|_\infty\leq 10^{-8}$ 
around $T=13.52.$ See numerical solutions  at $T=13.52$ in Figure \ref{KS-smooth2}. Note that in this case, the energy as defined in \eqref{energy-definition} reaches a constant value which is an indicator that the system has already reached the steady state.

  \subsection{A blow-up solution of the Keller-Segel system}
 
 For an initial condition with total mass above the critical mass, a blow-up will emerge in finite time for the Keller-Segel system \cite{dolbeault2004optimal, blanchet2006two}, see also  \cite{CCY19, GLY19} for computational examples. 
 
We test both schemes for an initial condition $\rho(0,x,y)=\frac{100}{1+40(x^2+y^2)}$ with total mass above the critical value.  
See solutions at $T=0.11$ in Figure \ref{KS-blowup-earliest},
 at $T=0.2$ in Figure \ref{KS-blowup-early}
 and at $T=0.8$ in Figure \ref{KS-blowup}.
  For both schemes, $\Delta t=\Delta x$ is used. Note that at $T=0.8$, the solution in the fourth order scheme is significantly different from the second order one, while the former is certainly more faithful due to its higher accuracy.

The energy evolution of numerical solutions is shown in Figure \ref{KS-energy}, where the discrete energy is defined as in \eqref{energy-definition}. It should be mentioned that the mesh constraints in Section \ref{sec-mono} for achieving monotonicity in the fourth order scheme will be eventually impossible to be satisfied for a blow-up solution, yet these mesh constraints are only sufficient conditions for monotonicity. In our fourth order numerical solutions, it has been checked that $\rho$ is always positive even after blow up. Therefore, the energy dissipation is still in good faith.

    \begin{figure}[htbp]
 \subfigure[The second order scheme.]{\includegraphics[scale=0.3]{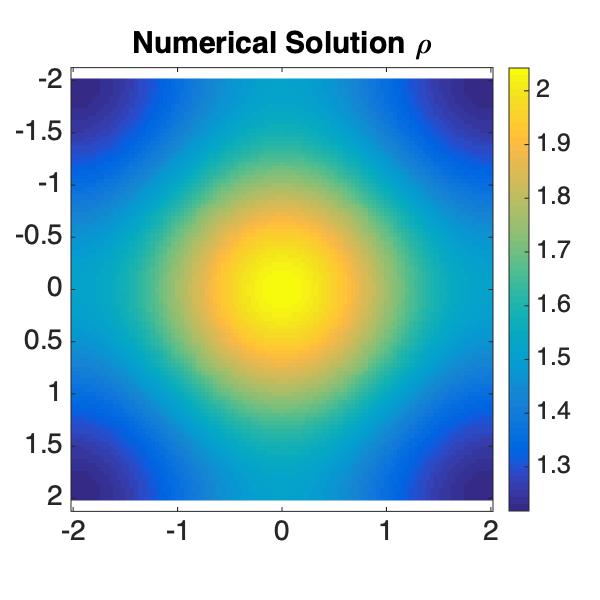} }
 \hspace{.1in}
 \subfigure[The fourth order scheme.]{\includegraphics[scale=0.3]{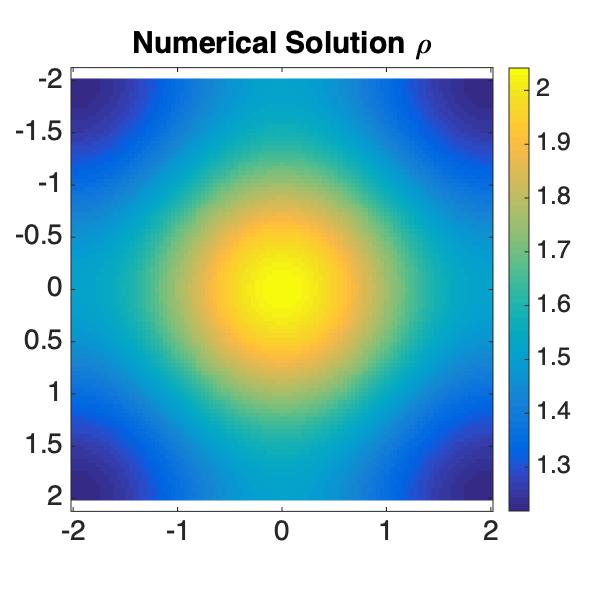}}\\
  \subfigure[The second order scheme.]{\includegraphics[scale=0.3]{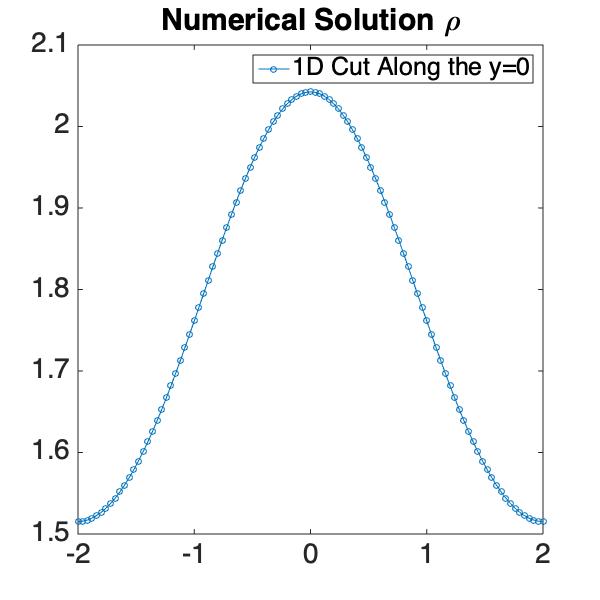} }
 \hspace{.1in}
 \subfigure[The fourth order scheme.]{\includegraphics[scale=0.3]{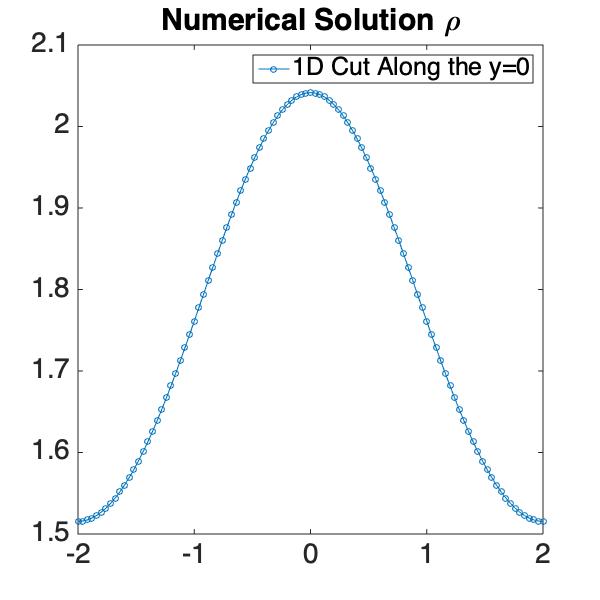}}\\
\caption{Keller-Segel system with an initial condition below critical mass $\rho(x,y,0)=\frac{60}{1+40(x^2+y^2)}$ on $\Omega=(-2,2)\times(-2,2)$. 
The solutions at $T=2$ are plotted. Both schemes are computed on a $101\times 101$ grid.}
\label{KS-smooth}
 \end{figure}

      \begin{figure}[htbp]
 \hspace{-.3in} \subfigure[The second order scheme.]{\includegraphics[scale=0.25]{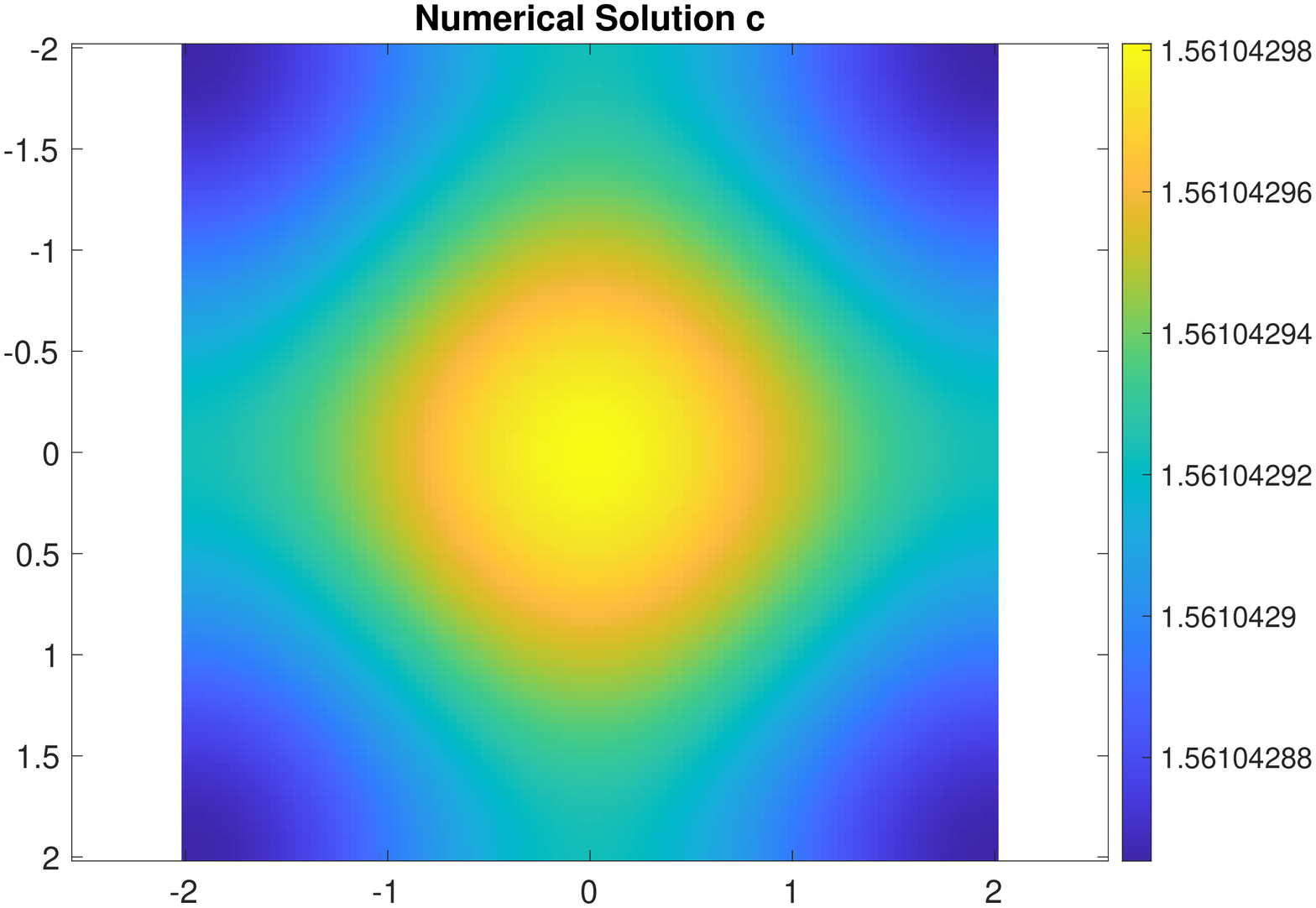} }
 \subfigure[The fourth order scheme.]{\includegraphics[scale=0.25]{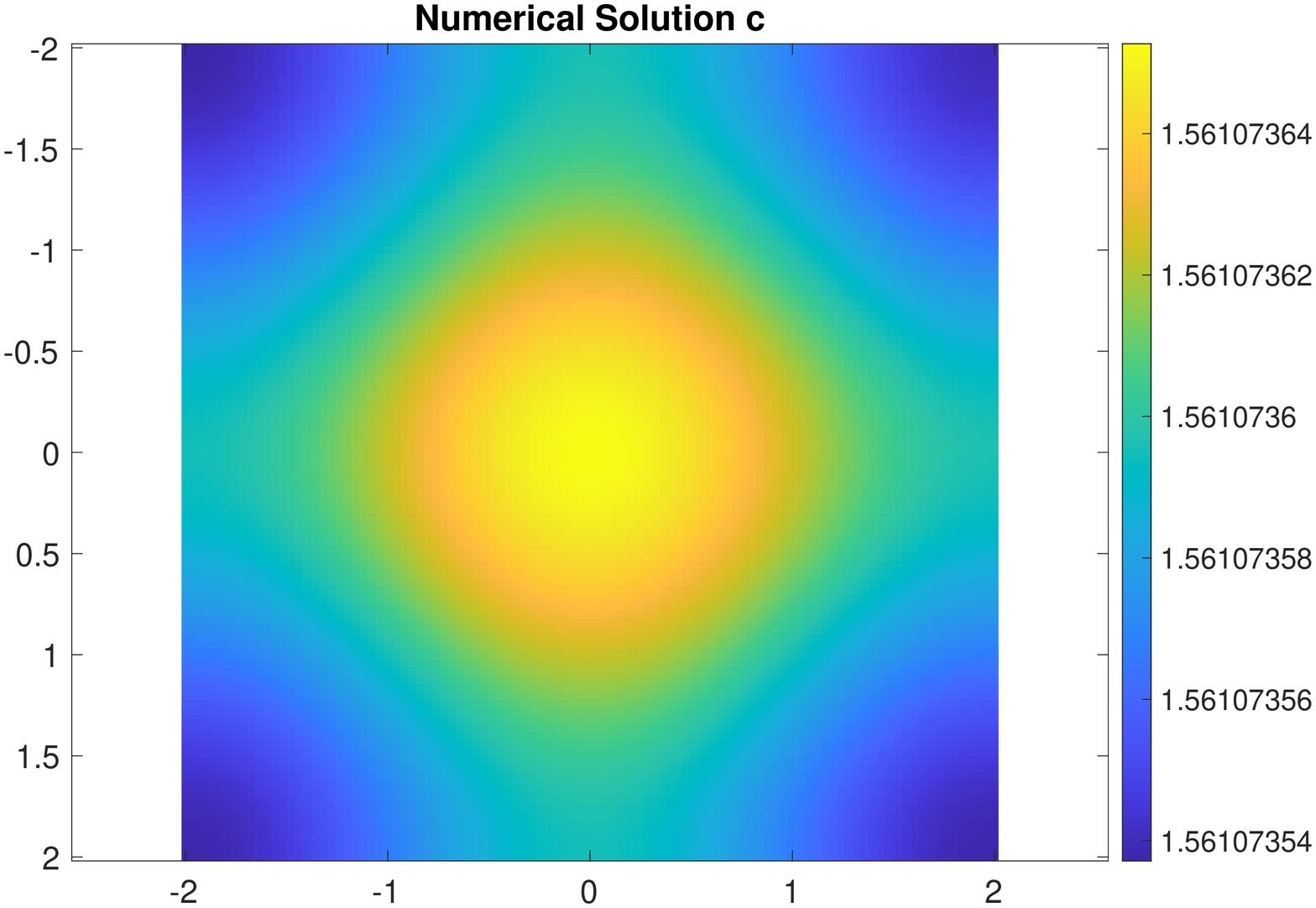}}\\ 
 
 \hspace{-.3in}\subfigure[The second order scheme.]{\includegraphics[scale=0.25]{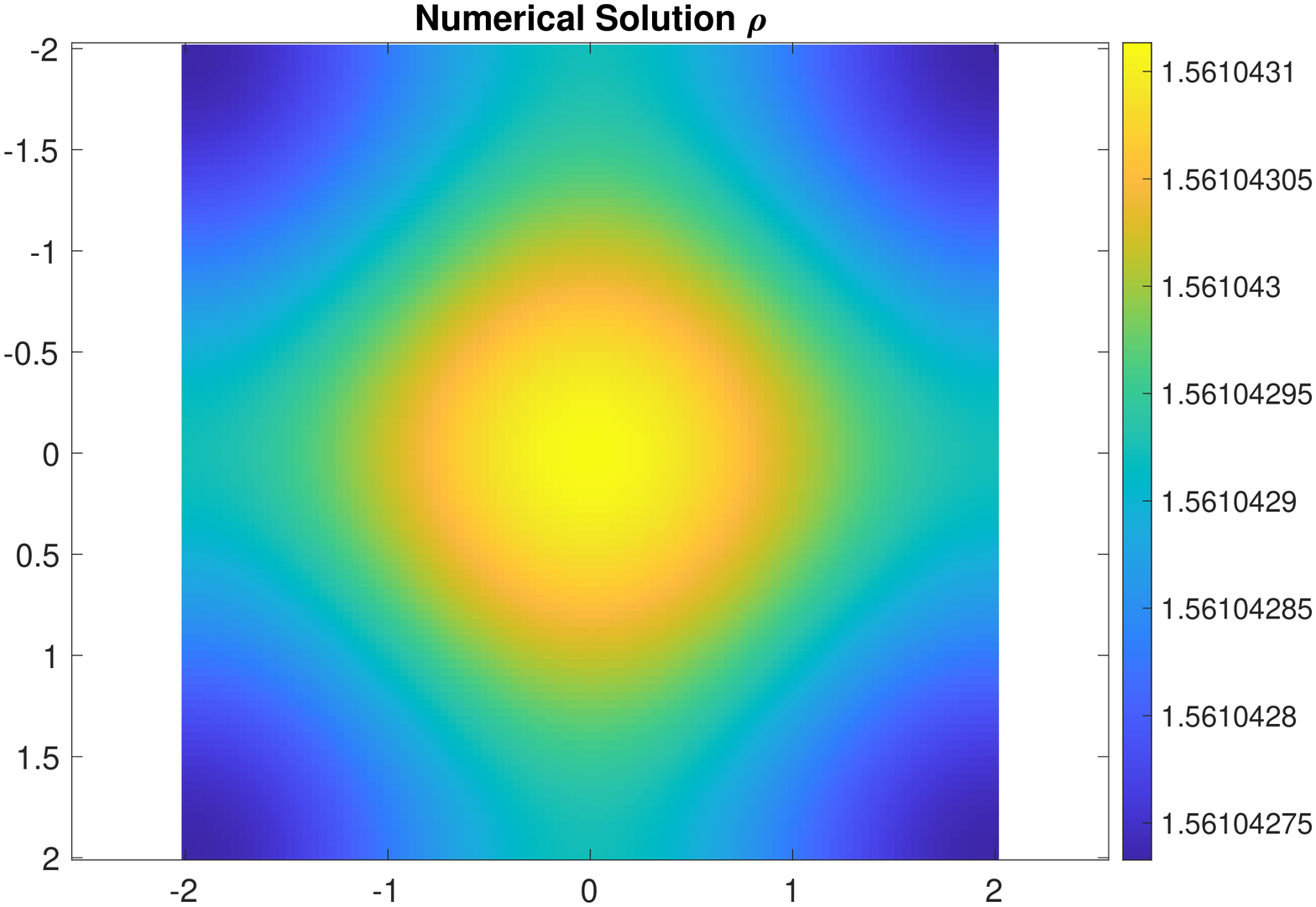} }
\subfigure[The fourth order scheme.]{\includegraphics[scale=0.25]{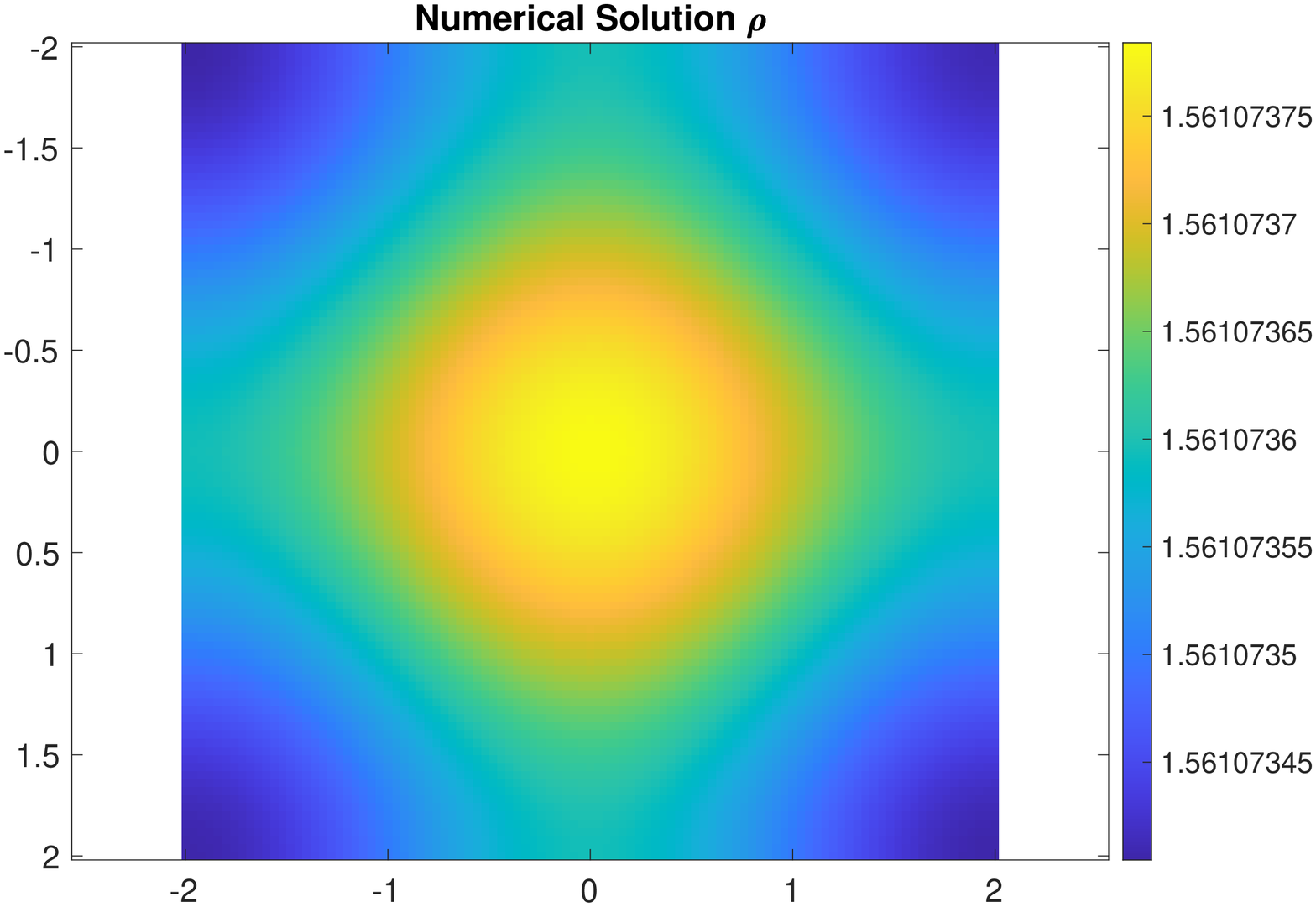}}\\
 
  \subfigure[The second order scheme.]{\includegraphics[scale=0.25]{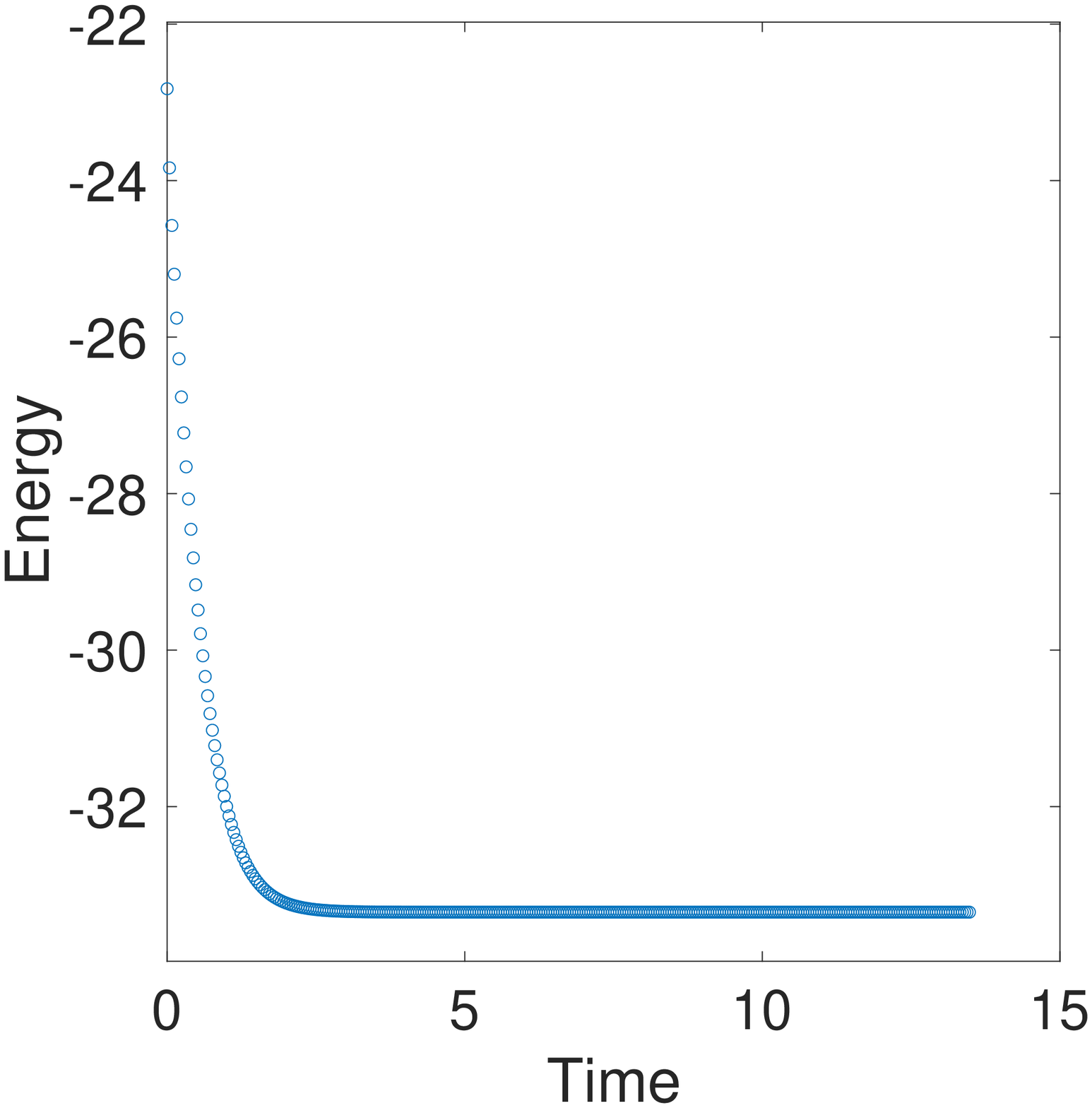} }
 \hspace{.5in}
 \subfigure[The fourth order scheme.]{\includegraphics[scale=0.25]{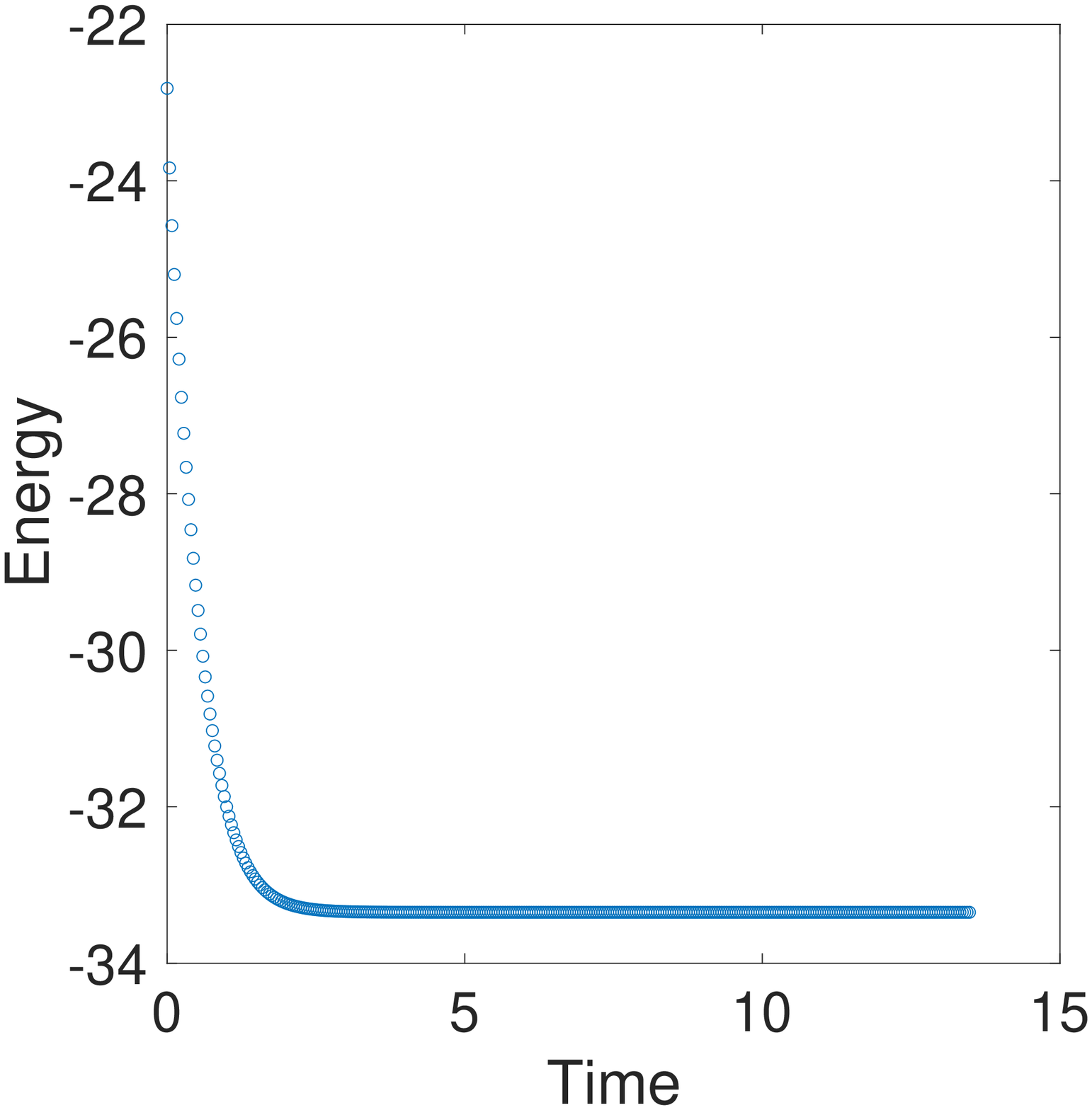}}\\  
\caption{Keller-Segel system with an initial condition below critical mass $\rho(x,y,0)=\frac{60}{1+40(x^2+y^2)}$. 
The plotted numerical solutions are around the time $T=13.52$ when $\|\rho^{n+1}-\rho^n\|_\infty\leq 10^{-8}$. Both schemes are computed on a $101\times 101$ grid.}
\label{KS-smooth2}
 \end{figure}

   \begin{figure}[htbp]
 \subfigure[The second order scheme  at $T=0.11$.]{\includegraphics[scale=0.3]{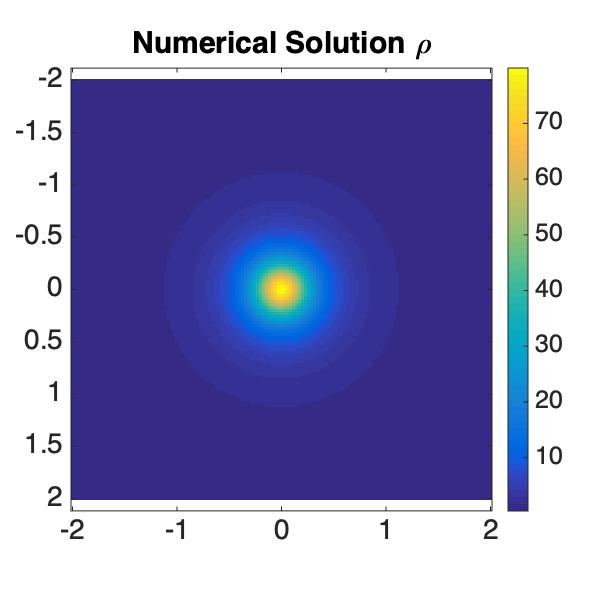} }
 \hspace{.1in}
 \subfigure[The fourth order scheme at $T=0.11$.]{\includegraphics[scale=0.3]{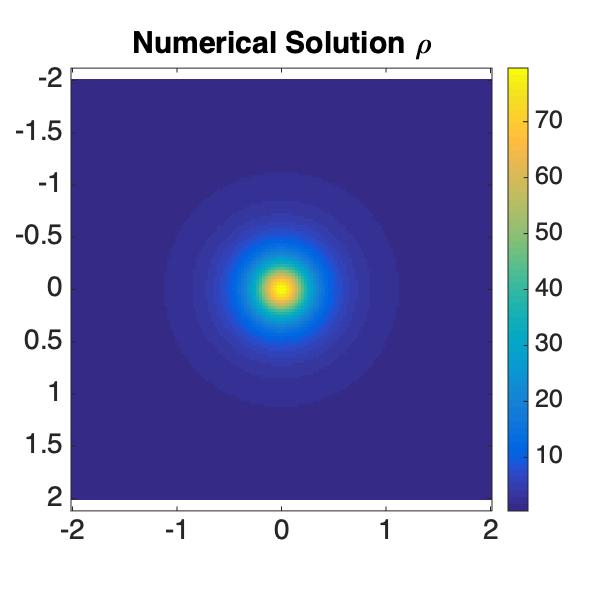}}\\
  \subfigure[The second order scheme at $T=0.11$.]{\includegraphics[scale=0.3]{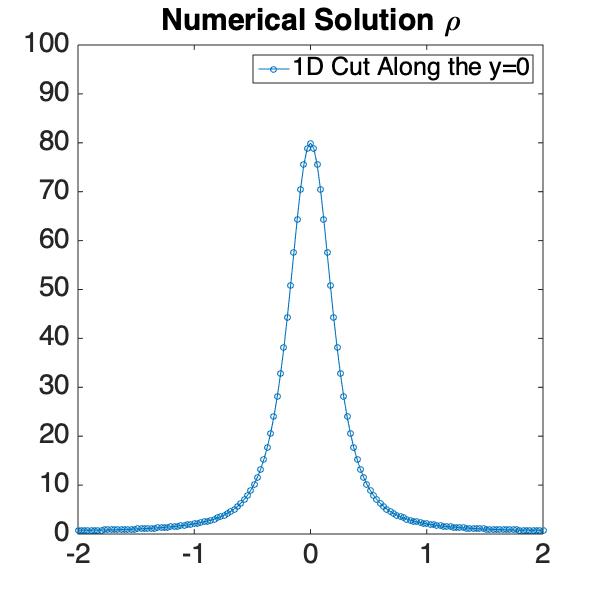} }
 \hspace{.1in}
 \subfigure[The fourth order scheme at $T=0.11$.]{\includegraphics[scale=0.3]{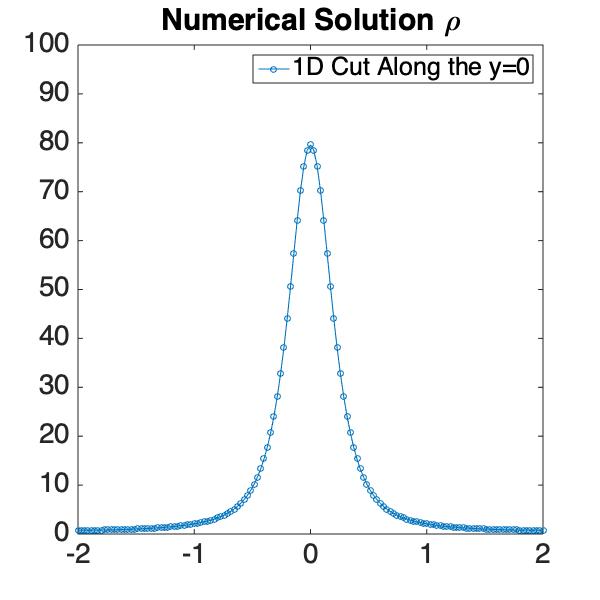}}
 \caption{Keller-Segel system with an initial condition above critical mass $\rho(x,y,0)=\frac{100}{1+40(x^2+y^2)}$ on $\Omega=(-2,2)\times(-2,2)$. 
Both schemes are computed on a $141\times 141$ grid.}
\label{KS-blowup-earliest}
 \end{figure}

    \begin{figure}[htbp]
 \subfigure[The second order scheme at $T=0.2$.]{\includegraphics[scale=0.3]{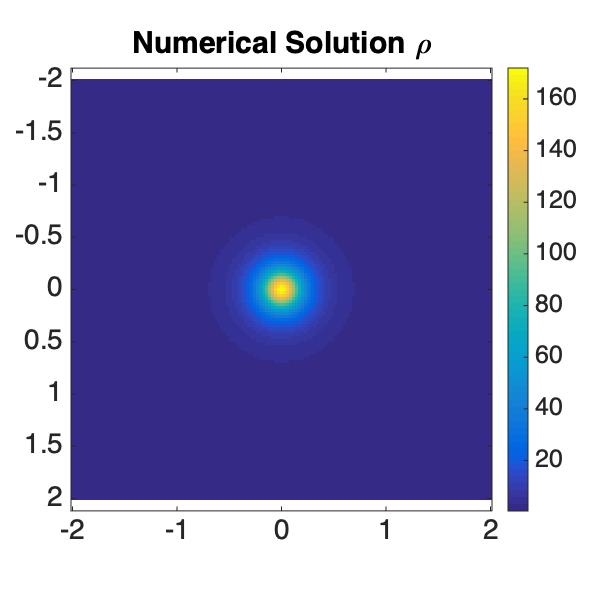} }
 \hspace{.1in}
 \subfigure[The fourth order scheme at $T=0.2$.]{\includegraphics[scale=0.3]{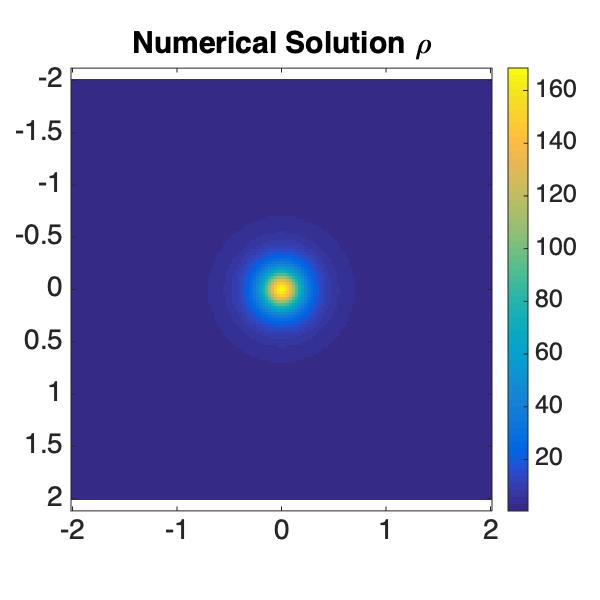}}\\
  \subfigure[The second order scheme at $T=0.2$.]{\includegraphics[scale=0.3]{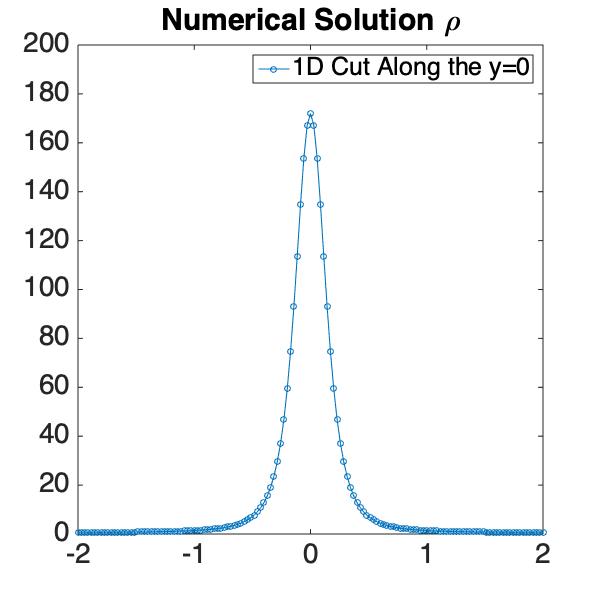} }
 \hspace{.1in}
 \subfigure[The fourth order scheme at $T=0.2$.]{\includegraphics[scale=0.3]{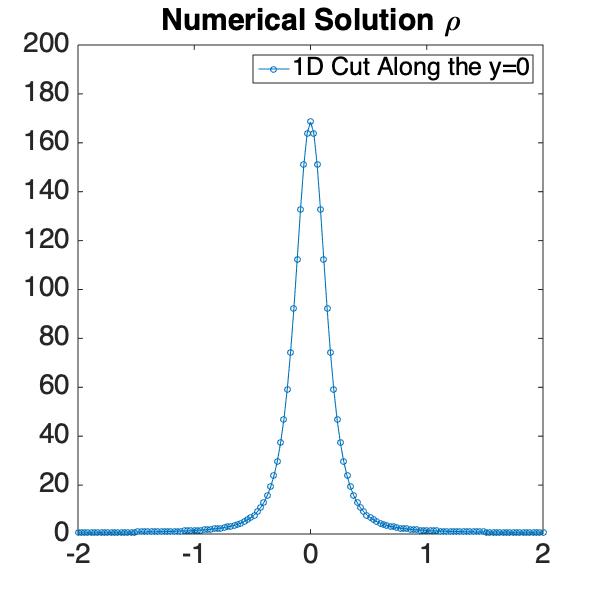}}
 \caption{Keller-Segel system with an initial condition above critical mass $\rho(x,y,0)=\frac{100}{1+40(x^2+y^2)}$ on $\Omega=(-2,2)\times(-2,2)$. 
 Both schemes are computed on a $141\times 141$ grid.}
\label{KS-blowup-early}
 \end{figure}

     \begin{figure}[htbp]
 \subfigure[The second order scheme at $T=0.8$.]{\includegraphics[scale=0.3]{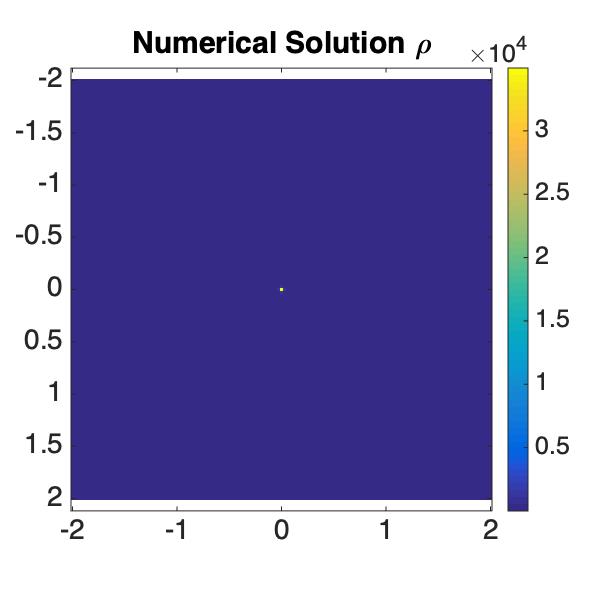} }
 \hspace{.1in}
 \subfigure[The fourth order scheme at $T=0.8$.]{\includegraphics[scale=0.3]{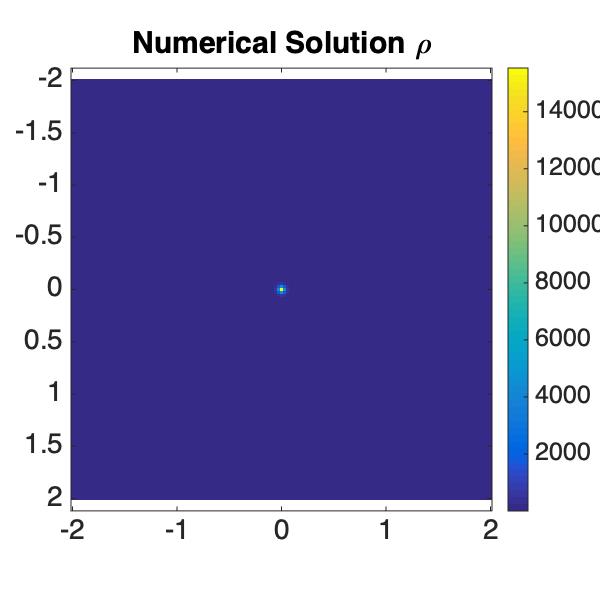}}\\
  \subfigure[The second order scheme at $T=0.8$.]{\includegraphics[scale=0.3]{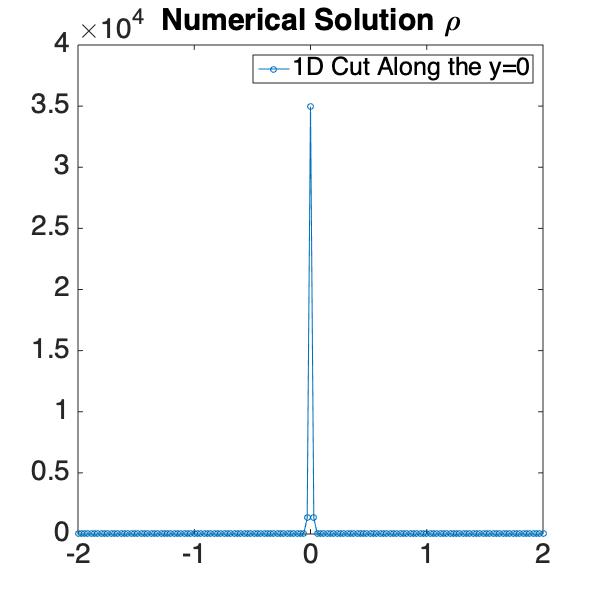} }
 \hspace{.1in}
 \subfigure[The fourth order scheme at $T=0.8$.]{\includegraphics[scale=0.3]{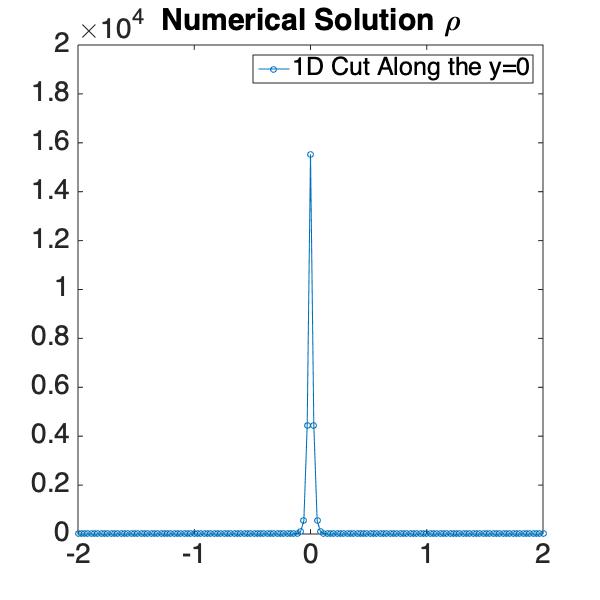}}
 \caption{Keller-Segel system with an initial condition above critical mass $\rho(x,y,0)=\frac{100}{1+40(x^2+y^2)}$ on $\Omega=(-2,2)\times(-2,2)$. 
 Both schemes are computed on a $141\times 141$ grid.}
\label{KS-blowup}
 \end{figure}

      \begin{figure}[htbp]
 \subfigure[The energy evolution  of the second order scheme.]{\includegraphics[scale=0.3]{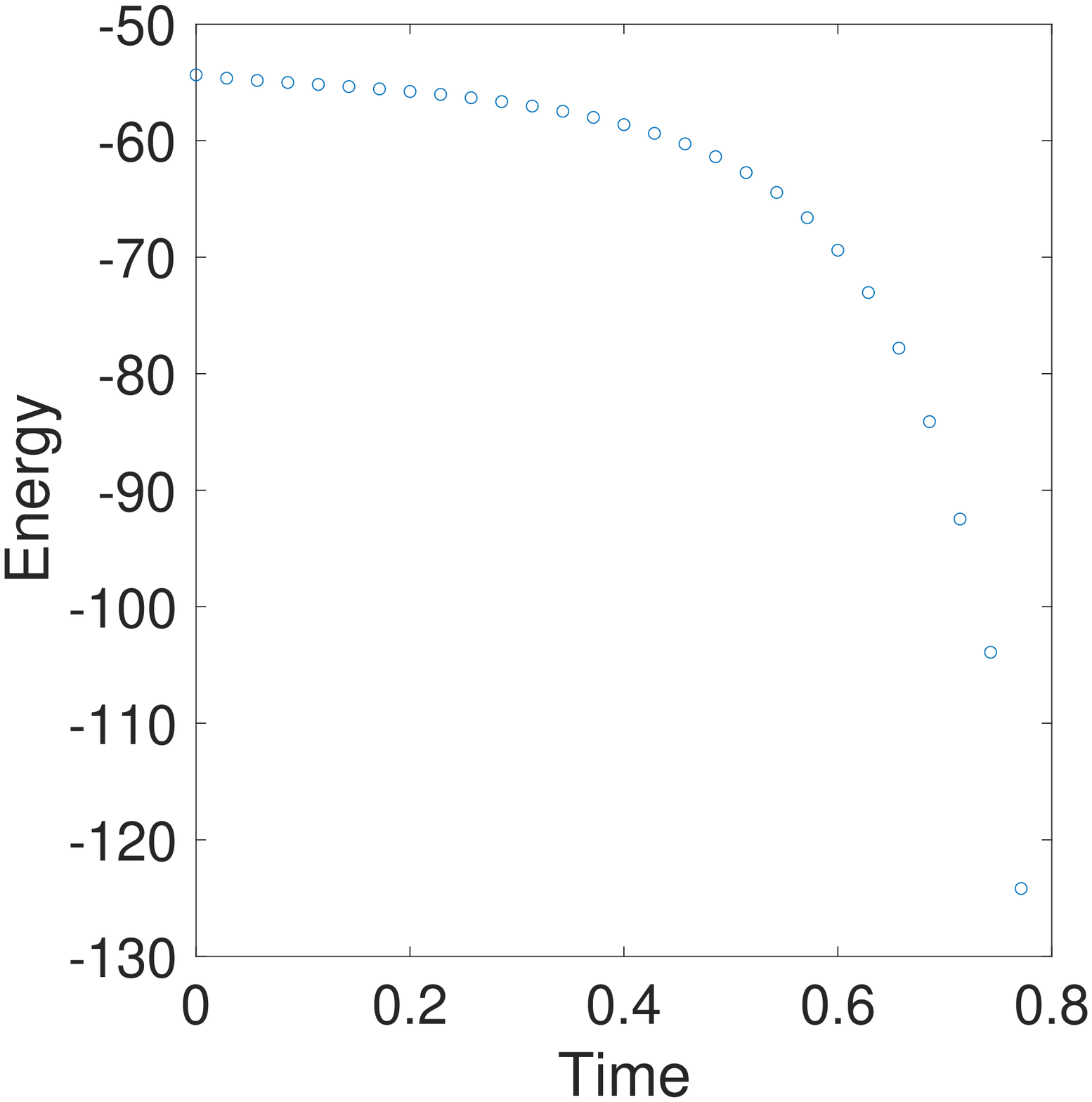} }
 \hspace{.1in}
 \subfigure[The energy evolution  of the fourth order scheme.]{\includegraphics[scale=0.3]{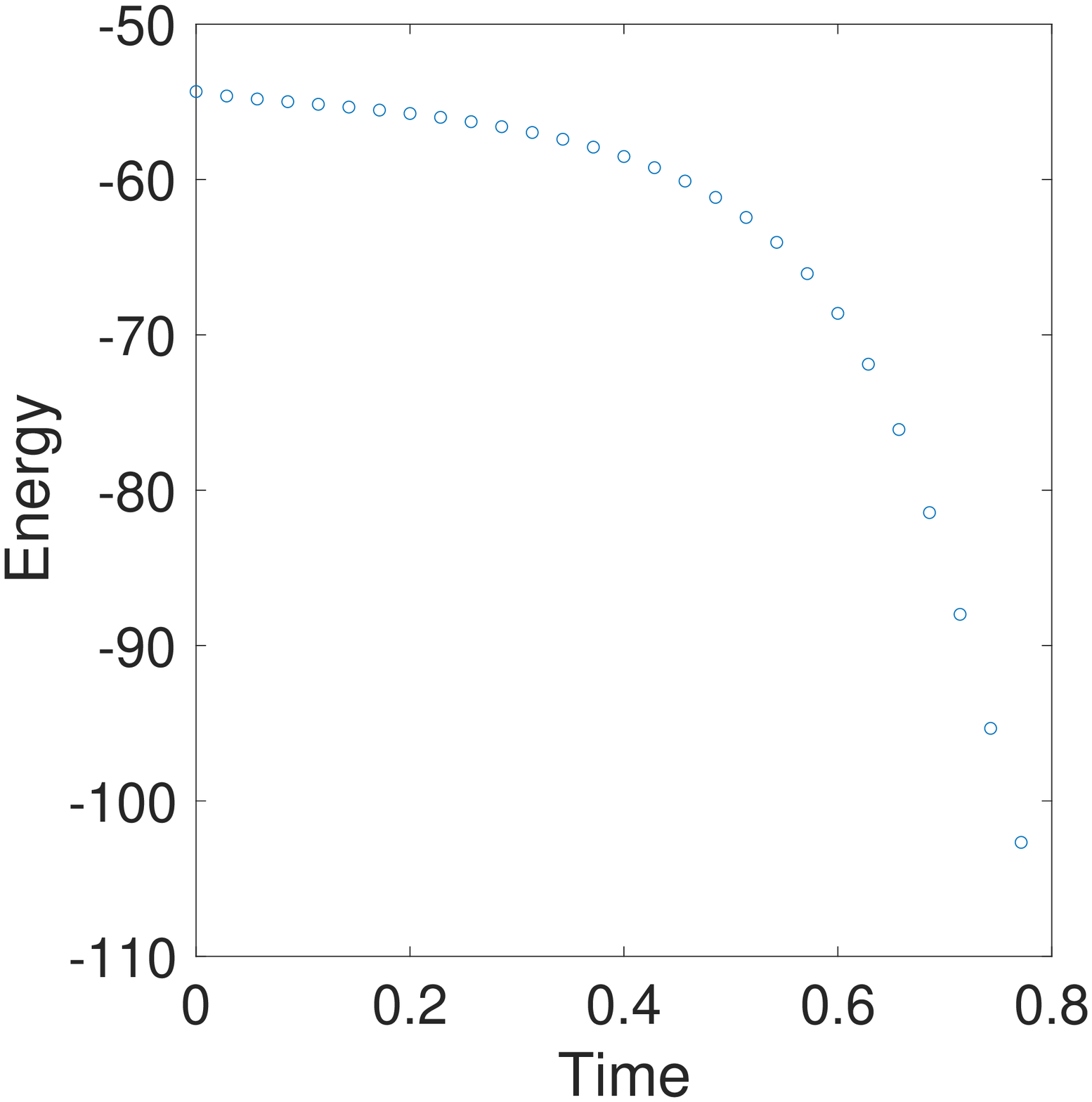}}
 \caption{Keller-Segel system with an initial condition above critical mass $\rho(x,y,0)=\frac{100}{1+40(x^2+y^2)}$ on $\Omega=(-2,2)\times(-2,2)$. 
 Both schemes are computed on a $141\times 141$ grid.}
\label{KS-energy}
 \end{figure}
 
 \section{Concluding remarks}
 \label{sec-remark}
 We have constructed two finite difference schemes which are proved be positivity-preserving and energy-dissipative for the Fokker-Planck and Keller-Segel type equations. The time discretization is a first order semi-implicit or implicit scheme. The spatial discretizations include a second order and a fourth order finite difference scheme, obtained via finite difference implementation of the finite element method with linear and quadratic polynomials on uniform meshes. Under mild mesh size and time step constraints for smooth solutions (a lower bound on time step rather than upper bound), the fourth order scheme is proved to be monotone thus is positivity-preserving and decays energy, which is  the first high order spatial discretization with these properties. 
Numerical tests on both the Fokker-Planck equation and Keller-Segel system are performed to verify the performance of the proposed schemes.  
 
\bibliographystyle{plain}
\bibliography{ref.bib}

 \end{document}